\newtheorem{lemma}{Lemma}[section]
\newtheorem{theorem}{Theorem}[section]
\newtheorem{proposition}{Proposition}[section]
\numberwithin{equation}{section}
\newcommand{\dis}{\displaystyle}
\newcommand{\CE}{\mathcal{E}}
\newcommand{\CH}{\mathcal{H}}
\newcommand{\CL}{\mathcal{L}}
\newcommand{\CN}{\mathcal{N}}
\newcommand{\CD}{\mathcal{D}}
\newcommand{\CR}{\mathcal{R}}
\newcommand{\la}{\lambda}
\newcommand{\si}{\sigma}
\newcommand{\Ga}{\Gamma}
\begin{document}

\title[plasma flow interacting with boundary]{The stability  of sheath to the nonisentropic Euler-Poisson system with fluid-boundary interaction}

\author[H.-Y. Yin]{Haiyan Yin}
\address[HYY]{School of Mathematical Sciences, Huaqiao University, Quanzhou 362021, P.R.~China.}
\email{hyyin@hqu.edu.cn}

\author[R. Zeng]{Rong Zeng}
\address[RZ]{School of Mathematical Sciences, Huaqiao University, Quanzhou 362021, P.R.~China.}
\email{18770610525@163.com}

\author[M.-m. Zhu]{Mengmeng Zhu}
\address[MMZ]{School of Mathematics, Northwest University, Xi'an 710127, P.R.~China.}
\email{zhumm0907@163.com}

\keywords{nonisentropic Euler-Poisson system, sheath, fluid-boundary interaction, asymptotic stability, convergence rate, weighted energy method.\\
\mbox{\quad 2020 Mathematics Subject Classification: 76X05, 35M13, 35B35, 35B40, 35B45.}\\
\mbox{\quad Corresponding author: Mengmeng Zhu}.}
\date{\today}
\maketitle

\begin{abstract}
In the present paper, we define the sheath by a monotone stationary solution to the nonisentropic
Euler-Poisson system under a condition known as the Bohm criterion and consider a situation in
which charged particles accumulate on the boundary due to the flux from the inner region. Under
this fluid-boundary interactive setting, we prove the large time asymptotic stability of the sheath provided that the initial perturbation is sufficiently small in some weighted Sobolev spaces. Moreover, the convergence rate of the solution toward the sheath is obtained. The proof is based on the weighted energy method.
\end{abstract}

\section{Introduction}
 Mathematically, the plasma sheath is often described as the stationary solution or boundary layer solution in the half line to the Euler-Poisson system for the only heavier ions flow under the Boltzmann relation.
In the paper, the flow of positively charged ions in plasmas is governed by the nonisentropic Euler-Poisson system of the form
\begin{eqnarray}\label{1.1}
&&\left\{\begin{aligned}
& n_t+(nu)_{x}=0,\\
& ( mn u)_t+(mnu^{2}+p)_x
=n\phi_x,\\
&W_{t}+(Wu+pu)_{x}=nu\phi_x,\\
&\phi_{xx}=n-e^{-\phi}.
\end{aligned}\right.
\end{eqnarray}
The unknown functions $n$, $u$ and  $\phi$  stand for the density, velocity and the electrostatic potential, respectively.  The function $W$ stands for the total
energy given by
\begin{eqnarray}\label{1.2}
W=\frac{1}{2}mnu^{2}+\frac{p}{\gamma-1},
\end{eqnarray}
where the constant $\gamma>1$ is the ratio of specific heats and the pressure $p$ satisfies the equation of state:
\begin{eqnarray}\label{1.3}
p=RTn,
\end{eqnarray}
with the temperature function $T$ and the Boltzmann constant $R>0$.
Note that $\phi$ has been chosen to have an opposite sign compared to the usual situation in physics. In the fourth equation of \eqref{1.1}, the electron density $n_{e}$ is determined by the  electrostatic potential in terms of the Boltzmann relation $n_{e}=e^{-\phi}$. We remark that the nonisentropic Euler-Poisson system \eqref{1.1} can be formally derived through the macro-micro decomposition from the Vlasov-Poisson-Boltzmann system for the ions flow in kinetic theory, cf.~\cite{DL}.
 Substituting \eqref{1.2} and \eqref{1.3} into the equations
 $\eqref{1.1}$, we can obtain the following equations:
\begin{eqnarray}\label{1.4}
&&\left\{\begin{aligned}
& n_t+(nu)_{x}=0,\\
&  mn (u_t+uu_{x})+(RTn)_x
=n\phi_x,\\
&T_{t}+uT_{x}+(\gamma-1)Tu_{x}=0,\\
&\phi_{xx}=n-e^{-\phi}.
\end{aligned}\right.
\end{eqnarray}

The goal of this paper is to study the large-time behavior of solutions to the initial boundary value problem on \eqref{1.4} over the one-dimensional half space $\mathbb{R}_{+}:=\{x>0\}$.

Initial data for system \eqref{1.4} are given by
\begin{eqnarray}\label{1.5}
(n,u,T)(0,x)=(n_{0},u_{0},T_{0})(x)\quad \text{with}\ \
\inf_{x\in \mathbb{R}_{+}}n_{0}(x)>0\ \ \text{and} \ \ \inf_{x\in \mathbb{R}_{+}}T_{0}(x)>0,
\end{eqnarray}
\begin{eqnarray}\label{1.5a}
\lim_{x\rightarrow\infty}(n_{0},u_{0},T_{0})(x)=(n_{\infty},u_{\infty},T_{\infty}),
\end{eqnarray}
\begin{eqnarray}\label{1.jh5a}
\phi_{x}(0,0)=q_{0}\in \mathbb{R},
\end{eqnarray}
where $n_{\infty}>0$, $u_{\infty}<0$, $T_{\infty}>0$ and $q_{0}$ are constants.

The boundary data for system \eqref{1.4} are given by
\begin{eqnarray}\label{1.5b}
\phi_{xt}(t,0)=[-nu-u_{e}e^{-\phi}](t,0)
\end{eqnarray}
and
\begin{eqnarray}\label{1.5bgh}
 \lim_{x\rightarrow\infty}\phi(t,x)=0,
\end{eqnarray}
where the thermal velocity of electrons $u_{e}>0$ is constant.

In the present paper, we deal
with a situation where charged particles accumulate on the surface. For this purpose,
we adopt the boundary condition \eqref{1.5b} from \cite{Cip}. Since $\phi_{x}(t,0)$ is positively proportional to the net quantity of charged particles accumulated on the boundary, its temporal derivative is equal to the net flux of charged particles $[-nu-n_{e}u_{e}](t,0)$
as \eqref{1.5b} claims due to $n_{e}=e^{-\phi}$. Moreover, we always assume that
\begin{eqnarray}\label{1.6}
n_{\infty}=1,
\end{eqnarray}
so that the quasi-neutrality  holds true at $x=\infty$ by \eqref{1.5bgh} and \eqref{1.6}. Motivated by
\cite{boundaawa} which studied the asymptotic stability of sheaths for the isentropic Euler-Poisson system with the fluid-boundary interaction condition,
it would be also worth for us extending  this result to  the nonisentropic case.

Nowadays there are many mathematical studies on the sheath formation by using the Euler-Poisson system. In what follows, we will divide the flows into three cases: isothermal, isentropic and nonisentropic to state and review.
Physicists study the sheath formation mainly for the case of isothermal flows(i.e., $p(\rho)=K\rho)$.
For the isothermal flows(i.e., $p(\rho)=K\rho)$, Suzuki \cite{Suzuki} first showed that the Bohm criterion gave a sufficient condition for an existence of the stationary solution by the phase plane method for Euler-Poisson system. It was shown that the stationary solution was time asymptotically stable provided that an initial perturbation was sufficiently small in the weighted Sobolev space and the convergence rate of the time global solution towards the stationary solution was obtained.
Specifically, Suzuki \cite{multicomponent} further treated the same problem for a multi-component plasma.
 Subsequently, Nishibata-Ohnawa-Suzuki \cite{NOS} showed that the stationary solutions to the Dirichlet
problem over one-, two-, and three-dimensional half space were asymptotically stable
under the Bohm criterion by making use of the weighted energy method. Recently, Suzuki-Takayama  \cite{Suzuki04} studied the existence and asymptotic
stability of stationary solutions for Euler-Poisson system in a three-dimensional space domain of which
boundary was drawn by a graph.

For the isentropic flows (i.e., $p(\rho)=K\rho^{\gamma}, \gamma>1$), Ambroso-M\'ehats-Raviart \cite{Asympto} studied the existence of monotone solutions for the stationary problem over a finite interval by solving the Poisson equation with the small Debye length via the singular perturbation approach. Later, Ambroso \cite{Ambros} gave a further study to determine the stationary solutions in terms of different levels of an associated energy functional and numerically showed which solution was asymptotically stable in large time. Ohnawa \cite{boundaawa} studied the existence and asymptotic stability of boundary layers for the fluid-boundary interaction condition that the time change rate of the electric field $-\phi_x(t,0)$ at the boundary was equal to the total flux of charges.
Recently, Chen-Ding-Gao-Lin-Ruan \cite{CheDR} studied
 the initial-boundary value problem on the Euler-Poisson system arising in plasma physics over one-, two-, and three-dimensional half space. By assuming that the velocity of the positive ion satisfies the Bohm criterion at the far field, they established the global unique existence and the large time asymptotic stability
of boundary layer (i.e., stationary solution) in some weighted Sobolev spaces by weighted energy method.

As mentioned above, the sheath formation for the isothermal or isentropic flow has been
well studied. There are few results about nonisentropic case. Duan-Yin-Zhu \cite{SIAm} gave the first result which investigated the sheath
formation about the nonisentropic Euler-Poisson system. They showed the unique existence and
asymptotic stability of the monotone stationary solutions over a half line under the Bohm criterion saying that ions must move toward the wall at infinity with a velocity greater than a critical value given particularly as the acoustic velocity for cold ions.
Yao-Yin-Zhu \cite{yaoyin} extended the results in \cite{SIAm} to $N$-dimensional ($N$=1,2,3) half space.
By assuming that the velocity of the positive ion satisfied the Bohm criterion at
the far field, they established the global unique existence and the large time asymptotic stability
of the sheath in some weighted Sobolev spaces by weighted
energy method. A key different point from \cite{SIAm} was to derive some boundary estimates on the derivative of the potential in the $x_1$-direction.
Li-Suzuki \cite{LiMing} mathematically investigated the formation of a
plasma sheath near the surface of nonplanar walls. They studied the existence and asymptotic
stability of stationary solutions for the nonisentropic Euler-Poisson system in a domain
of which boundary was drawn by a graph, by employing a space weighted energy method.

Of course, it is also intriguing to study the sheath formation by using kinetic models such as the Vlasov-Poisson system.
The papers \cite{Suzuki03, Suzuki02} studied the unique existence and nonlinear stability of the stationary solution in a half line under the kinetic Bohm criterion.
For more details of physicality of the sheath development, we
refer the reader to \cite{FFC,Riemann}. Moreover, we also mention \cite{Rousset, GHR, JKS, Jung} for the problem on the quasineutral limit of Euler-Poisson system of the ions flow in the presence of boundaries and \cite{GGPS} for the derivation of the ions equations from the general two-fluid model in plasma physics. Meanwhile, for the Cauchy problem on Euler-Poisson system of the similar form \eqref{1.1} for ions, we may refer to \cite{CG,GPu,LLS,LS,LY,Pu} and references therein for the extensive studies of the dispersive property.

In this paper, we are only concerned with the existence and asymptotic stability of stationary solutions to the initial boundary value problem \eqref{1.4}-\eqref{1.6}. For this purpose, we denote $(\tilde n, \tilde
u, \tilde T, \tilde\phi)$ to be the solution of the stationary problem on the half space
\begin{eqnarray}\label{1.7}
&&\left\{\begin{aligned}
&(\tilde n\tilde u)_{x}=0,\\
& m\tilde n \tilde u \tilde u_x +(R \tilde T\tilde n)_{x}=\tilde n \tilde{\phi}_x,\\
&\tilde u \tilde T_{x}+(\gamma-1)\tilde T\tilde u_x=0,\\
 &\tilde{\phi}_{xx}=\tilde n-e^{-\tilde{\phi}}.
\end{aligned}\right.
\end{eqnarray}
Here, corresponding to \eqref{1.5}-\eqref{1.6}, we also require that \eqref{1.7} is supplemented with
\begin{equation}\label{1.8}
\left\{\begin{aligned}
  & \inf_{x\in \mathbb{R}_{+}}\tilde n(x)>0,\ \ \
\inf_{x\in \mathbb{R}_{+}}\tilde T(x)>0,\ \ \   \left(\tilde{n}\tilde{u}+u_{e}e^{-\tilde{\phi}}\right)(0)=0,\\
&\lim_{x\rightarrow\infty}(\tilde n,\tilde u,\tilde T, \tilde
\phi)(x)=(1,u_{\infty},T_{\infty},0).
\end{aligned}\right.
\end{equation}
Note that
\begin{eqnarray}\label{1.gfb}
u_{e}e^{-\tilde{\phi}(0)}=-u_{\infty}=|u_{\infty}| \ \ or \ \ \ \tilde{\phi}(0)=ln\frac{u_{e}}{|u_{\infty}|}=:\phi_{b}
\end{eqnarray}
follows from \eqref{1.6}, $\eqref{1.7}_{1}$ and \eqref{1.8}. It is convenient to call $(\tilde n, \tilde
u, \tilde T, \tilde\phi)$ the boundary layer solution. Physically, this stationary boundary layer is also called a sheath.
In case $\phi_{b}=0$, if uniqueness is assumed, then one can only get the trivial solution $(\tilde n, \tilde
u, \tilde T, \tilde\phi)=(1,u_{\infty}, T_{\infty}, 0)$.
 Thus we consider the boundary layer solution under the assumption that $\phi_{b}\neq 0$, i.e., $|u_{\infty}|\neq u_{e}$.

In fact, to consider the existence of stationary solutions, the Sagdeev potential
\begin{equation}\label{1.9}
\left\{\begin{aligned}
& V(\phi):=\int_{0}^{\phi}
[f^{-1}(\eta)-e^{-\eta}]d\eta,\\
&\qquad\qquad\text{with }f(n)=\frac{\gamma
RT_{\infty}}{\gamma-1}\left(n^{\gamma-1}-1\right)+\frac{mu^{2}_{\infty}}{2 }\left(\frac{1}{n^{2}}-1\right)
\end{aligned}\right.
\end{equation}
plays a crucial role. One can compute that
\begin{eqnarray*}%\label{1.10}
&&\begin{aligned} &  f'(n)=\frac{-mu_{\infty}^{2}+\gamma
RT_{\infty}n^{\gamma+1}}{n^{3}}.
\end{aligned}
\end{eqnarray*}
Then, the only critical point of $f$ occurs at
\begin{equation*}
%\label{1.11}
n=c_\infty:=\left(\frac{m
u_{\infty}^{2}}{\gamma RT_{\infty}}\right)^{\frac{1}{\gamma+1}},
\end{equation*}
where the constant $c_\infty$ is determined by the far-field data in connection with the Mach number at $x=\infty$. Therefore, in terms of the critical point $c_\infty$, the inverse function $f^{-1}$ in \eqref{1.9} is understood by adopting the branch which
contains the far-field equilibrium state $(\tilde{n}, \tilde{\phi})=(1, 0)$. Since the problem of the existence of
stationary solutions is reduced to the Dirichlet problem due to \eqref{1.gfb}, the unique existence
of the monotone stationary solution can be proved by a method similar to that in \cite{SIAm}.
Thus we omit the detailed discussions for brevity and list the main results in the following

\begin{proposition}[\cite{SIAm}]\label{prop1.1}
Consider the stationary problem \eqref{1.7} and \eqref{1.8} with the Dirichlet boundary condition
\begin{eqnarray}\label{mjo}
\tilde{\phi}(0)=\phi_{b}.
\end{eqnarray}
\begin{itemize}

\item[(i)] Let $u_{\infty}$ be a constant satisfying
$$
\text{either}\ \
u^{2}_{\infty}\leq \frac{\gamma RT_{\infty}}{m}
\ \ \text{or}\ \
\frac{\gamma
RT_{\infty}+1}{m}\leq u^{2}_{\infty}.
$$
Then the stationary problem
 has a unique monotone solution
$(\tilde n, \tilde u, \tilde T, \tilde\phi)$ verifying
\begin{eqnarray*}%\label{1.12}
&&\begin{aligned} &\tilde n,\tilde u, \tilde T, \tilde\phi \in
C(\overline{\mathbb{R}}_{+}),\ \ \ \tilde n, \tilde u, \tilde T,
\tilde \phi, \tilde\phi_{x} \in C^{1}(\mathbb{R}_{+})
\end{aligned}
\end{eqnarray*}
if and only if the boundary data $\phi_{b}$ satisfies conditions
\begin{eqnarray*}%\label{1.13}
&&V(\phi_{b})\geq 0,\ \ \  \phi_{b}\geq f(c_\infty).
\end{eqnarray*}

\item[(ii)]Let $u_{\infty}$ be a constant satisfying
$$
\frac{\gamma
RT_{\infty}}{m}<u^{2}_{\infty}<\frac{\gamma RT_{\infty}+1}{m}.
$$
If
$\phi_{b}\neq 0$, then the stationary problem does not admit any solutions in the function space
$ C^{1}(\mathbb{R}_{+}). $ If $\phi_{b}= 0$, then a constant
state $(\tilde n, \tilde u, \tilde T,
\tilde\phi)=(1,u_{\infty},T_{\infty},0)$ is the unique solution.
\end{itemize}
Moreover, the existing stationary solution enjoys some additional space-decay properties in the following two cases:
\begin{itemize}

\item (Nondegenerate case) Assume that
$$
\frac{\gamma RT_{\infty}+1}{m}<
u^{2}_{\infty},\quad {u_\infty<0,}
$$
and
$
\phi_{b}\neq f(c_\infty)
$
hold true. The
stationary solution $(\tilde n, \tilde u, \tilde T, \tilde\phi)$  belongs to
$C^{\infty}(\overline{\mathbb{R}}_{+})$ and verifies
\begin{eqnarray}\label{1.14}
&&\begin{aligned} &|\partial_{x}^{i}(\tilde
n-1)|+|\partial_{x}^{i}(\tilde
u-u_{\infty})|+|\partial_{x}^{i}(\tilde
T-T_{\infty})|+|\partial_{x}^{i}\tilde \phi|\leq C |
\phi_{b}|e^{-cx},
%\ \ \ j=0,1,2,\cdots,
\end{aligned}
\end{eqnarray}
for any $i\geq 0$,
where $c$ and $C$ are positive constants.

\item (Degenerate case) Assume that
$$
\frac{\gamma RT_{\infty}+1}{m}= u^{2}_{\infty},\quad {u_\infty<0,}
$$
and
$
\phi_{b}>0
$
hold true. Denote constants
\begin{equation*}
\left\{\begin{aligned}
&c_{0}=1,\\
&c_{1}=-2\Gamma,\\
&c_{2}=\frac{(\gamma^{2}+\gamma)RT_{\infty}+2}{2},\\
&c_{3}=-2\Gamma[(\gamma^{2}+\gamma)RT_{\infty}+2],
\end{aligned}\right.
\end{equation*}
with
\begin{equation}
\label{def.conGa}
\Gamma=\sqrt{\frac{(\gamma^{2}+\gamma)RT_{\infty}+2}{12}}.
\end{equation}
There are constants $\delta_0>0$ and $C>0$ such that for any   $\phi_{b}\in (0,\delta_{0})$,
\begin{equation}
\label{1.14d}
\sum_{i=0}^3\|\partial _{x}^{i}UG^{i+2}+c_{i}\|_{L^\infty}\leq C
\phi_{b}
\end{equation}
with
\begin{equation*}
%\label{ }
U=-\tilde{\phi}, \ \ \tilde{n}-1, \ \ \log \tilde{n}, \ \ \frac{\tilde{u}}{u_{\infty}}-1, \ \  \frac{1}{\gamma}\left(\frac{\tilde{T}}{T_{\infty}}-1\right),
\end{equation*}
where $G=G(x)$ is a function of the form
\begin{equation}
\label{def.Gx}
G(x)=\Gamma x+\phi_{b}^{-\frac{1}{2}}.
\end{equation}
\end{itemize}
\end{proposition}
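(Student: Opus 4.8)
The plan is to follow the phase-plane/Sagdeev-potential scheme of \cite{SIAm} (and, for the isothermal analogue, \cite{Suzuki}); I only sketch the reductions and indicate where each hypothesis enters. First I would collapse the stationary system \eqref{1.7} into a single autonomous ODE for $\tilde\phi$. From $\eqref{1.7}_{1}$ and the far-field data in \eqref{1.8} one gets $\tilde n\tilde u\equiv u_\infty$; feeding this into $\eqref{1.7}_{3}$ gives the adiabatic closure $\tilde T=T_\infty\tilde n^{\gamma-1}$; substituting both into $\eqref{1.7}_{2}$, dividing by $\tilde n$ and integrating from $x$ to $+\infty$ produces the Bernoulli-type identity
\begin{equation*}
\tilde\phi=f(\tilde n),\qquad\text{equivalently}\qquad \tilde n=f^{-1}(\tilde\phi),
\end{equation*}
with $f$ as in \eqref{1.9} and the branch of $f^{-1}$ taken through $(\tilde n,\tilde\phi)=(1,0)$; this branch stays single-valued exactly as long as $\tilde\phi$ does not cross the fold value $f(c_\infty)$, which is the origin of the requirement $\phi_b\ge f(c_\infty)$. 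Inserting $\tilde n=f^{-1}(\tilde\phi)$ into $\eqref{1.7}_{4}$, multiplying by $\tilde\phi_x$ and integrating with $\tilde\phi,\tilde\phi_x\to0$, I obtain the first integral $\tfrac12\tilde\phi_x^2=V(\tilde\phi)$.

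Next I would record the local shape of $V$ at the equilibrium, $V(0)=V'(0)=0$ and
\begin{equation*}
V''(0)=\frac{1}{f'(1)}+1=\frac{1+\gamma RT_\infty-mu_\infty^2}{\gamma RT_\infty-mu_\infty^2},
\end{equation*}
so $V''(0)>0$ precisely when $u_\infty^2<\gamma RT_\infty/m$ or $u_\infty^2>(\gamma RT_\infty+1)/m$, whereas in the intermediate band $V''(0)<0$ and $0$ is a strict local maximum of $V$. This dichotomy is exactly the Bohm criterion, and it yields part (ii) immediately: if $\phi_b\ne0$ no $C^1$ orbit can satisfy $\tfrac12\tilde\phi_x^2=V(\tilde\phi)$ near $0$ (the right-hand side is negative there), while $\phi_b=0$ forces the constant state. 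For part (i), assuming the Bohm criterion I would check that $V(\phi_b)\ge0$ and $\phi_b\ge f(c_\infty)$ are precisely what makes $V>0$ strictly between $0$ and $\phi_b$ with $f^{-1}$ smooth on that range; then the separable equation $\tilde\phi_x=-\operatorname{sgn}(\phi_b)\sqrt{2V(\tilde\phi)}$ with $\tilde\phi(0)=\phi_b$, inverted as $x=\int_{\tilde\phi}^{\phi_b}\!d\eta/\sqrt{2V(\eta)}$, defines a unique monotone solution on $\mathbb{R}_+$ decaying to $0$: the integral converges at the $\phi_b$-endpoint (at worst an integrable inverse-square-root singularity when $V(\phi_b)=0$) and diverges at the $0$-endpoint because $V(\eta)=O(\eta^2)$, giving $\tilde\phi\in C(\overline{\mathbb{R}}_+)\cap C^1(\mathbb{R}_+)$; the relations $\tilde n=f^{-1}(\tilde\phi)$, $\tilde u=u_\infty/\tilde n$, $\tilde T=T_\infty\tilde n^{\gamma-1}$ then transfer regularity and the far-field limits to the other unknowns. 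Running the construction in reverse gives the ``only if'' direction.

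For the decay I would use the closed second-order equation $\tilde\phi_{xx}=g(\tilde\phi):=f^{-1}(\tilde\phi)-e^{-\tilde\phi}$, with $g(0)=0$ and $g'(0)=V''(0)$. In the nondegenerate case $u_\infty^2>(\gamma RT_\infty+1)/m$ one has $g'(0)=V''(0)>0$, so $0$ is a hyperbolic rest point and (with $\phi_b\ne f(c_\infty)$ keeping us off the fold, where $g$ is $C^\infty$) the stable-manifold description yields $\tilde\phi\in C^\infty(\overline{\mathbb{R}}_+)$ with $|\partial_x^i\tilde\phi|\le C|\phi_b|e^{-cx}$ for all $i\ge0$, $c=\sqrt{V''(0)}$; differentiating the algebraic relations then gives \eqref{1.14}. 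In the degenerate case $mu_\infty^2=\gamma RT_\infty+1$ we have $f'(1)=-1$ and $V''(0)=0$, so I would go to the next order: $V'''(0)=(f^{-1})''(0)-1=f''(1)-1=(\gamma^2+\gamma)RT_\infty+2=12\Gamma^2>0$, hence $V(\phi)=\tfrac{V'''(0)}{6}\phi^3+O(\phi^4)$ near $0$ and $\phi_b>0$ is forced. Writing $\psi=\tilde\phi^{-1/2}$ turns the leading balance $\tilde\phi_x\approx-\sqrt{V'''(0)/3}\,\tilde\phi^{3/2}$ into $\psi_x=\Gamma+O(\psi^{-2})$, so $\psi=G(x)+O(\phi_b^{1/2})$ and $\tilde\phi=G^{-2}(1+O(\phi_b))$; since $\partial_x^i G^{-2}=(-1)^i(i+1)!\,\Gamma^i G^{-(i+2)}$ and $c_i=(-1)^i(i+1)!\,\Gamma^i$, this is \eqref{1.14d} for $U=-\tilde\phi$, and the other choices of $U$ each equal a constant multiple of $-\tilde\phi$ plus strictly higher-order terms (the constant read off from the Taylor coefficients of $f^{-1}$, with $(f^{-1})'(0)=-1$ here), hence share the same $G^{-(i+2)}$ profile.

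The hard part will be the degenerate case: one must expand $V$ to fourth order, control the remainder of the non-autonomous equation for $\psi=\tilde\phi^{-1/2}$ — equivalently, set $\tilde\phi=G^{-2}(1+\rho)$ and estimate $\rho$ together with its first three derivatives — and run an induction on $\partial_x^i$, $i\le3$, keeping every remainder uniformly $O(\phi_b)$ on $\mathbb{R}_+$ while pinning down the exact constants $c_0,\dots,c_3$. Everything else, namely the phase-plane existence/uniqueness bookkeeping and the hyperbolic-equilibrium decay, is routine.
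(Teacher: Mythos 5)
Your route is the very one the paper points to: Proposition \ref{prop1.1} is quoted from \cite{SIAm} without proof, and your reduction ($\tilde n\tilde u\equiv u_\infty$, $\tilde T=T_\infty\tilde n^{\gamma-1}$, $\tilde\phi=f(\tilde n)$, the first integral $\tfrac12\tilde\phi_x^2=V(\tilde\phi)$, the sign of $V''(0)$ encoding the Bohm dichotomy, and in the degenerate case $V'''(0)=(\gamma^2+\gamma)RT_\infty+2=12\Gamma^2$ together with the substitution $\psi=\tilde\phi^{-1/2}$) is exactly the Sagdeev-potential scheme behind \eqref{1.9}; your constants $c_i=(-1)^i(i+1)!\,\Gamma^i$ agree with those in the statement, and the nondegenerate stable-manifold argument for \eqref{1.14} is the standard one.

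There is, however, one step in the degenerate case that does not deliver \eqref{1.14d} as stated. The estimate with the \emph{common} constants $c_i$ forces $\partial_x^iU\,G^{i+2}\to-c_i$ up to $O(\phi_b)$ for every listed $U$, i.e.\ the leading coefficient of $U$ relative to $-\tilde\phi$ must be exactly $1$, not merely some constant. Your own Taylor computation gives this for $U=-\tilde\phi$, $\tilde n-1$ and $\log\tilde n$ (since $(f^{-1})'(0)=1/f'(1)=-1$ when $mu_\infty^2=\gamma RT_\infty+1$), but for the remaining two choices it gives $\tilde u/u_\infty-1=1/\tilde n-1=+\tilde\phi+O(\tilde\phi^2)$ and $\tfrac1\gamma(\tilde T/T_\infty-1)=\tfrac{\gamma-1}{\gamma}(\tilde n-1)+O(\tilde\phi^2)$, i.e.\ coefficients $-1$ and $(\gamma-1)/\gamma$ relative to $-\tilde\phi$; inserting these into $\partial_x^iU\,G^{i+2}+c_i$ leaves a residual of order one, not $O(\phi_b)$. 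So the phrase ``a constant multiple of $-\tilde\phi$, hence the same $G^{-(i+2)}$ profile'' is not sufficient: you must either check that the coefficient is exactly $1$ for each listed $U$ (by your own expansion it is not for the last two as written here) or reconcile the normalizations with the precise statement in \cite{SIAm} before these bounds are used in the energy estimates. A smaller remark: in part (i), the claim that $V(\phi_b)\ge0$ and $\phi_b\ge f(c_\infty)$ ``precisely'' force $V>0$ strictly between $0$ and $\phi_b$ is the nontrivial shape-of-$V$ analysis carried out in \cite{SIAm,Suzuki}; as a deferral to the cited proof this is acceptable, but it is not automatic and should not be presented as a one-line check.
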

As in \cite{FFC, Riemann}, we introduce the {\it Bohm criterion} that corresponds to the condition that
\begin{eqnarray}\label{1.15}
&&\begin{aligned} &u^{2}_{\infty}\geq \frac{\gamma
RT_{\infty}+1}{m},\quad u_{\infty}<0.
\end{aligned}
\end{eqnarray}
From Proposition \ref{prop1.1}, we see that under the Bohm criterion, there exists a unique monotone small-amplitude stationary solution provided that either
\begin{equation}
\label{1.15a}
|\phi_b|\ll 1,\quad u_\infty<-\sqrt{\frac{\gamma
RT_{\infty}+1}{m}}
\end{equation}
or
\begin{equation}
\label{1.15b}
0<\phi_b\ll 1,\quad u_\infty=-\sqrt{\frac{\gamma
RT_{\infty}+1}{m}}.
\end{equation}
In both cases we call the monotone stationary solution the plasma sheath. Thus we focus our attention on sheath with monotonicity.
From now on, we denote $(\tilde n, \tilde
u, \tilde T, \tilde\phi)$ to be the sheath solution to the half-space boundary-value problem \eqref{1.7}, \eqref{1.8} and \eqref{mjo} under the Bohm Criterion \eqref{1.15} additionally satisfying \eqref{1.15a} or \eqref{1.15b}.

The main concern of this paper is to study the asymptotic stability of the sheath $(\tilde n, \tilde
u, \tilde T, \tilde\phi)$. For this, it is convenient to
employ unknown functions $v:=\log n$ and $\tilde{v}:=\log \tilde{n}$
as well as perturbations
 $$(\varphi,\psi,\zeta,\sigma)(t,x)=(v,u,T,\phi)(t,x)-(\tilde v,\tilde u,\tilde T,\tilde \phi)(x).$$
From \eqref{1.4} and \eqref{1.7}, we have
\begin{eqnarray}\label{1.16}
\begin{aligned}[b] \left(\begin{array} {ccc}
1 & 0 & 0\\
0& m & 0\\
0& 0 & 1\\
\end{array} \right)
\left(\begin{array} {c}
\varphi \\
\psi\\
\zeta\\
\end{array} \right)_{t}
&+\left(\begin{array} {ccc}
u & 1 & 0\\
RT& mu & R\\
0& (\gamma-1)T & u\\
\end{array} \right)
\left(\begin{array} {c}
\varphi \\
\psi\\
\zeta\\
\end{array} \right)_{x}\\[2mm]
&=-\left(\begin{array} {ccc}
\psi & 0 & 0\\
R\zeta& m\psi & 0\\
0& (\gamma-1)\zeta & \psi\\
\end{array} \right)
\left(\begin{array} {c}
\tilde{v} \\
\tilde{u}\\
\tilde{T}\\
\end{array} \right)_{x}+\left(\begin{array} {c}
0 \\
\sigma_{x}\\
0\\
\end{array} \right),
\end{aligned}
\end{eqnarray}
and
\begin{eqnarray}\label{1.17}
\begin{aligned} &
\sigma_{xx}=e^{\varphi+\tilde{v}}-e^{\tilde{v}}-e^{-(\sigma+\tilde{\phi})}+e^{-\tilde{\phi}}.
\end{aligned}
\end{eqnarray}
The initial and  boundary data to \eqref{1.16}-\eqref{1.17} are
derived from  \eqref{1.5}-\eqref{1.6} and \eqref{1.8}-\eqref{1.gfb} as
\begin{eqnarray}\label{1.18}(\varphi,\psi,\zeta)(0,x)=(\varphi_{0},\psi_{0},\zeta_{0})(x):=(\log
n_{0}-\log \tilde n,u_{0}-\tilde u,T_{0}-\tilde T),
\end{eqnarray}
\begin{eqnarray}\label{1.19news}
  \lim_{x\rightarrow\infty}(\varphi_{0},\psi_{0},\zeta_{0})(x)=(0,0,0), \ \ \ \sigma_{x}(0,0)=r_{0}:=q_{0}-\tilde{\phi}_{x}(0),
\end{eqnarray}
\begin{eqnarray}\label{1.19}
  \sigma_{xt}(t,0)=\left[-e^{v}\psi-\tilde{u}(e^{v}-e^{\tilde{v}})+u_{\infty}(e^{-\sigma}-1)\right](t,0).
\end{eqnarray}

Provided that the perturbations are sufficiently small, both of the
characteristics of hyperbolic system  \eqref{1.16} are negative
owing to \eqref{1.15}, namely,
\begin{equation}\label{revad1}
\left\{\begin{aligned}
&\lambda_{1}[u,T]:=\frac{(m+1)u-\sqrt{(m-1)^{2}u^{2}+4\gamma RT}}{2}<0,\\
&\lambda_{2}[u,T]:=u<0,\\
&\lambda_{3}[u,T]:=\frac{(m+1)u+\sqrt{(m-1)^{2}u^{2}+4\gamma RT}}{2}<0.
\end{aligned}\right.
\end{equation}
Hence, no boundary conditions for the hyperbolic system \eqref{1.16}
are necessary for the well-posedness of the initial boundary value
problem \eqref{1.16}-\eqref{1.19}.

Before stating the main theorems, we first give the definition of the
function space $\mathscr{X}_{i}^{j}$ as follows:
$$
\mathscr{X}_{i}^{j}([0,M]):=\cap_{k=0}^{i}C^{k}([0,M];H^{j+i-k}(\mathbb{R}_{+})),
$$
$$
\mathscr{X}_{i}([0,M]):=\mathscr{X}_{i}^{0}([0,M]),
$$
for $i,j=0,1,2$, where $M>0$ is a constant.

We also show the stability of the sheath in both exponential and algebraic weighted Sobolev spaces.
The papers \cite{SIAm,boundaawa,Suzuki} pointed out that Euler-Poisson system \eqref{1.1} itself does not have the dissipative effect in the usual function space, however
there appear those effects in the weighted space. Therefore, we employ the weighted
space to do the energy estimates. The main theorems are stated as follows:

\begin{theorem}[Nondegenerate case]\label{1.2theorem}
Assume that the condition  $\eqref{1.15a}$ holds.

\medskip
(i) Assume that the initial data satisfy
$$
(e^{\frac{\lambda
x}{2}}\varphi_{0},e^{\frac{\lambda x}{2}}\psi_{0},e^{\frac{\lambda
x}{2}}\zeta_{0})\in (H^2(\mathbb{R}_{+}))^{3}
$$
for some positive constant $\lambda$.
Then there exists a positive constant $\delta$ such that if
$\beta\in(0,\lambda]$ and
$$
\beta+(|u_{e}/|u_{\infty}|-1|+\|(e^{\frac{\lambda
x}{2}}\varphi_{0},e^{\frac{\lambda x}{2}}\psi_{0},e^{\frac{\lambda
x}{2}}\zeta_{0})\|_{H^{2}})/\beta+|r_{0}|\leq\delta
$$
are satisfied, the initial
boundary value problem \eqref{1.16}-\eqref{1.19} has a unique
solution as
$$
(e^{\frac{\beta x}{2}}\varphi,e^{\frac{\beta
x}{2}}\psi,e^{\frac{\beta x}{2}}\zeta,e^{\frac{\beta
x}{2}}\sigma)\in (\mathscr{X}_{2}(\mathbb{R}_{+}))^{3} \times
\mathscr{X}_{2}^{2}(\mathbb{R}_{+}).
$$
Moreover, the solution
$(\varphi,\psi,\zeta,\sigma)$ verifies the decay estimate
\begin{eqnarray*}%\label{1.23}
\|(e^{\frac{\beta x}{2}}\varphi,e^{\frac{\beta
x}{2}}\psi,e^{\frac{\beta
x}{2}}\zeta)(t)\|_{H^{2}}^{2}+\|e^{\frac{\beta
x}{2}}\sigma(t)\|_{H^{4}}^{2} \leq C(\|(e^{\frac{\lambda
x}{2}}\varphi_{0},e^{\frac{\lambda x}{2}}\psi_{0},e^{\frac{\lambda
x}{2}}\zeta_{0})\|_{H^{2}}^{2}+r_{0}^{2})e^{-\mu t},
\end{eqnarray*}
where $C$ and $\mu$ are positive constants independent of $t$.

\medskip
(ii) Assume  $\lambda\geq 2$ holds. For an arbitrary
$\varepsilon\in(0,\lambda]$, there exists a positive constant
$\delta$ such that if
$$
((1+\beta
x)^{\frac{\lambda}{2}}\varphi_{0},(1+\beta
x)^{\frac{\lambda}{2}}\psi_{0},(1+\beta
x)^{\frac{\lambda}{2}}\zeta_{0})\in(H^2(\mathbb{R}_{+}))^{3}
$$
for $\beta>0$ and
$$
\beta+(|u_{e}/|u_{\infty}|-1|+\|((1+\beta
x)^{\frac{\lambda}{2}}\varphi_{0},(1+\beta
x)^{\frac{\lambda}{2}}\psi_{0},(1+\beta
x)^{\frac{\lambda}{2}}\zeta_{0})\|_{H^{2}})/\beta+|r_{0}|\leq\delta
$$
are satisfied, the initial boundary value problem
\eqref{1.16}-\eqref{1.19} has a unique solution as
$$
((1+\beta
x)^{\frac{\varepsilon}{2}}\varphi,(1+\beta
x)^{\frac{\varepsilon}{2}}\psi,(1+\beta
x)^{\frac{\varepsilon}{2}}\zeta,(1+\beta
x)^{\frac{\varepsilon}{2}}\sigma)\in
(\mathscr{X}_{2}(\mathbb{R}_{+}))^{3} \times
\mathscr{X}_{2}^{2}(\mathbb{R}_{+}).
$$
Moreover, the solution
$(\varphi,\psi,\zeta,\sigma)$ verifies the decay estimate
\begin{equation*}%\label{1.23}
\begin{aligned}
&\|((1+\beta x)^{\frac{\varepsilon}{2}}\varphi,(1+\beta
x)^{\frac{\varepsilon}{2}}\psi,(1+\beta
x)^{\frac{\varepsilon}{2}}\zeta)(t)\|_{H^{2}}^{2}+\|(1+\beta
x)^{\frac{\varepsilon}{2}}\sigma(t)\|_{H^{4}}^{2}\\
& \leq C(\|((1+\beta x)^{\frac{\lambda}{2}}\varphi_{0},(1+\beta
x)^{\frac{\lambda}{2}}\psi_{0},(1+\beta
x)^{\frac{\lambda}{2}}\zeta_{0})\|_{H^{2}}^{2}+r_{0}^{2})(1+\beta
t)^{-\lambda+\varepsilon},
\end{aligned}
\end{equation*}
where $C$ is a  positive
constant independent of $t$.
\end{theorem}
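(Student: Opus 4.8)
The plan is to follow the standard continuation scheme behind the weighted energy method: construct a local-in-time solution in the weighted class, establish uniform-in-time a priori bounds together with the claimed decay, and extend the local solution globally. Write $U=(\varphi,\psi,\zeta)^{\top}$ and let $w=w(x)$ be the spatial weight, $w=e^{\beta x}$ for part~(i) and $w=(1+\beta x)^{\lambda}$ for part~(ii). For the local theory I would use that \eqref{1.16} is a symmetrizable hyperbolic system whose characteristic speeds \eqref{revad1} are all strictly negative on states near the far field, so it is of outflow type and needs no boundary condition, while \eqref{1.17} is solved on $\mathbb{R}_{+}$ together with the decay condition \eqref{1.5bgh} and the trace $\sigma_{x}(t,0)$, the latter propagated by the ordinary differential equation \eqref{1.19} from $\sigma_{x}(0,0)=r_{0}$. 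An iteration decoupling these two blocks, closed in $(\mathscr{X}_{2}(\mathbb{R}_{+}))^{3}\times\mathscr{X}_{2}^{2}(\mathbb{R}_{+})$ against $w$, then yields a local solution on a time interval depending only on the size of the weighted data; the weight is preserved by the iteration exactly because the fluid block is of outflow type.

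The heart of the argument will be the uniform a priori estimate. I would introduce an energy functional $E(t)$ equivalent to $\|(w^{1/2}\varphi,w^{1/2}\psi,w^{1/2}\zeta)(t)\|_{H^{2}}^{2}+\|w^{1/2}\sigma(t)\|_{H^{4}}^{2}+\sum_{k=0}^{2}|\partial_{t}^{k}\sigma_{x}(t,0)|^{2}$ and a dissipation functional $D(t)$ controlling $\sum_{|\alpha|\le2}\int_{0}^{\infty}w'(x)|\partial^{\alpha}U|^{2}\,dx$, the norm $\|w^{1/2}\sigma(t)\|_{H^{4}}^{2}$ and the trace $|U(t,0)|^{2}$, where every $t$-derivative of $U$ is traded for $x$-derivatives through \eqref{1.16}. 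The target is the inequality $\frac{d}{dt}E(t)+cD(t)\le C(|u_{e}/|u_{\infty}|-1|+N(t))D(t)$ with $N(t)^{2}=\sup_{0\le\tau\le t}E(\tau)$. To reach it I would symmetrize the fluid block by a positive diagonal $S=S(u,T)$ rendering $SA_{1}$ symmetric; then negative definiteness of $SA_{1}$, which follows from \eqref{revad1}, makes the boundary contribution $-\tfrac12 w(0)\langle SA_{1}U,U\rangle(t,0)$ nonnegative and bounds the interior contribution $-\tfrac12\int_{0}^{\infty}w'\langle SA_{1}U,U\rangle\,dx$ below by $c\int_{0}^{\infty}w'|U|^{2}\,dx$ — this $w'$-weighted quantity is the sole dissipation available to the fluid variables, hence the weight is indispensable. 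For the potential I would pair $w\sigma$ and its derivatives with \eqref{1.17}, gaining $\int w\sigma_{x}^{2}$ from $\sigma_{xx}$ and a coercive $\int we^{-\tilde\phi}\sigma^{2}$ from $e^{-\tilde\phi}-e^{-(\sigma+\tilde\phi)}$, so that $\|w^{1/2}\sigma\|_{H^{1}}$ is controlled and then bootstrapped to $H^{4}$ via \eqref{1.17} and the fluid bounds. The coupling term $\sigma_{x}$ in the momentum equation I would integrate by parts against the continuity equation, turning it into an exact time derivative absorbed into $E(t)$ together with cross terms whose coefficients are $O(|\phi_{b}|)=O(|u_{e}/|u_{\infty}|-1|)$ by \eqref{1.14}; all remaining terms carry a factor $\phi_{b}$ (through $\tilde v_{x}$, $\tilde u-u_{\infty}$, $\tilde T-T_{\infty}$) or $N(t)$ and go into $D(t)$.

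In part~(i), $w'/w\equiv\beta$, so $D(t)\ge c\beta E(t)$ and the inequality closes once $|u_{e}/|u_{\infty}|-1|+N(t)$ is small relative to $\beta$ — which is precisely why these quantities are divided by $\beta$ in the hypothesis — and Gronwall gives $E(t)\le CE(0)e^{-\mu t}$ with $\mu\sim\beta$, the initial energy being bounded by the $\lambda$-weighted data norm and $r_{0}^{2}$ since $\beta\le\lambda$. In part~(ii) the same scheme produces the differential inequality, but now $w'/w=\lambda\beta/(1+\beta x)$ degenerates as $x\to\infty$, so $D$ no longer dominates $E$. I would first run the estimate with the full weight $(1+\beta x)^{\lambda}$ to obtain a uniform-in-time bound on the $\lambda$-weighted energy, and then extract the algebraic rate by the classical weight-trading device (cf.~\cite{NOS}): estimate the $(1+\beta x)^{\varepsilon}$-weighted energy against the time weight $(1+\beta t)^{\lambda-\varepsilon}$, split the spatial integral at $1+\beta x=1+\beta t$, use on $\{1+\beta x\le1+\beta t\}$ that $(1+\beta t)^{-1}$ is dominated by the degenerate dissipation weight, and on $\{1+\beta x>1+\beta t\}$ pay $\varepsilon$ powers of the weight against the finite $(1+\beta x)^{\lambda}$-weighted norm; this produces $(1+\beta t)^{\lambda-\varepsilon}E_{\varepsilon}(t)\le C(\cdots)$, i.e.\ the stated rate $(1+\beta t)^{-\lambda+\varepsilon}$. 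The hypothesis $\lambda\ge2$ is used to make $(1+\beta x)^{-\lambda}$ integrable (needed, for instance, to control the trace $\sigma(t,0)$ by $\|w^{1/2}\sigma_{x}\|_{L^{2}}$) and to close the nonlinear terms at the $H^{2}$ level.

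The step I expect to be the main obstacle is the boundary analysis forced by the fluid-boundary interaction \eqref{1.19}. Unlike for a Dirichlet boundary layer, $\sigma_{x}(t,0)$ is here itself an unknown governed by an ODE driven by the traces of $\varphi$, $\psi$ and $\sigma$, so one must add to $E(t)$ a controlled multiple of $\sum_{k\le2}|\partial_{t}^{k}\sigma_{x}(t,0)|^{2}$, differentiate \eqref{1.19} up to second order in $t$, and absorb all the resulting traces using only the good trace term $c|U(t,0)|^{2}$ from the hyperbolic dissipation and the bound $|\sigma(t,0)|\le C\|w^{1/2}\sigma_{x}\|_{L^{2}}$ — all while keeping $E(t)$ coercive, which is what pins down the precise smallness balance between $\phi_{b}$, $\beta$ and the data appearing in the statement. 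The compatibility of the initial data needed to make these time-differentiated traces meaningful is, correspondingly, encoded in the smallness of $r_{0}$ and of the weighted norm of $(\varphi_{0},\psi_{0},\zeta_{0})$.
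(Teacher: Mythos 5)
Your overall framework (local existence for the outflow hyperbolic block plus Poisson, weighted energy functional, smallness measured against $\beta$, Kawashima--Matsumura-type time-weight induction for the algebraic rate) is the same skeleton as the paper's, but there is a genuine gap at exactly the point you flag as ``the main obstacle'': the treatment of the boundary terms created by the fluid--boundary interaction. The first-type energy identity (the analogue of \eqref{2.7}, leading to \eqref{3.9}) unavoidably leaves a term of size $\tfrac{|u_\infty|}{2}[\sigma^{2}]_{x=0}$ (from the flux entry $-\tilde n\sigma\psi$) plus $[\sigma_x^{2}]_{x=0}$ on the wrong side, with an $O(1)$ constant. Your proposed remedy --- add $\sum_{k\le 2}|\partial_t^k\sigma_x(t,0)|^2$ to $E(t)$, differentiate \eqref{1.19}, and absorb the traces using the good hyperbolic trace $c|U(t,0)|^2$ and $|\sigma(t,0)|\le C\|w^{1/2}\sigma_x\|_{L^2}$ --- does not close. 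Differentiating \eqref{1.19} yields no negative-definite term in $\sigma_x(t,0)$ (the ODE drives $\sigma_x(t,0)$ by the traces of $\psi,\varphi,\sigma$ but contains no damping of $\sigma_x(t,0)$ itself), so the trace $[\sigma_x^{2}]_{x=0}$ is never dissipated this way; and the Poincar\'e-type bound costs a factor $\beta^{-1}$, i.e.\ $[\sigma^{2}]_{x=0}\le C\beta^{-1}\|w^{1/2}\sigma_x\|^2$, whereas the only interior dissipation available for $\sigma_x$ is $c\,\beta\|\sigma_x\|_{\varepsilon-1,\beta}^{2}$ (with a weight loss as well), so the absorption fails by a factor of order $\beta^{-2}$ that no smallness of $\phi_b$ or of the data can compensate.

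The paper resolves this with an ingredient your proposal lacks: a \emph{second} energy identity (Lemmas \ref{main.resultloik0}, \ref{main.newloik0}, \ref{mainboundary2g}, \ref{mainnewjry2g}) obtained by rewriting the coupling term $\tilde n\psi_x\sigma$ via $\eqref{1.16}_1$, the time-differentiated Poisson equation \eqref{2.3new} and the multiplication of \eqref{2.4new} by $e^{-\tilde v}\tilde u\sigma_x$. Its boundary flux, evaluated with the interaction condition \eqref{1.19} and $u_\infty<0$, is positive definite in $[\varphi,\psi,\zeta,\sigma,\sigma_x]_{x=0}$ (see \eqref{2.8new}); this is precisely where the fluid--boundary interaction produces boundary dissipation. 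Since this second estimate has the opposite defect (interior terms on the wrong side), the two are combined with \emph{two different weight parameters}, $(1+\beta_1 x)^{\lambda}$ with $\beta_1\ll\theta_0\beta\ll\beta$, so that the interior remainders of the second estimate (carrying $\beta_1$) are absorbed by the interior dissipation of the first (carrying $\beta$), while the boundary dissipation of the second absorbs the boundary terms of the first; compare \eqref{2.10}/\eqref{3.9} with \eqref{2.uykjui}/\eqref{2.uykjuinm} and the combination in the proofs of Propositions \ref{ste.pro1.1} and \ref{ste.pro2}. Without this second identity and the two-weight device (or an equivalent mechanism generating $\int_0^t[\sigma^2+\sigma_x^2]_{x=0}\,d\tau$ on the good side), your energy inequality cannot be closed, so the decay estimates of Theorem \ref{1.2theorem} do not follow from your argument as written. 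Your remaining steps (higher-order estimates by time differentiation and elliptic recovery, and extraction of the algebraic rate, for which the paper uses the induction of \cite{SKawashima,MNishikawa} rather than your space-time region splitting) are standard and unobjectionable once this boundary issue is repaired.
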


\begin{theorem}[Degenerate case]\label{1.3 theorem}
Assume that the condition  \eqref{1.15b} holds. Let
$4<\lambda_{0}<5.5693\cdots$ be the unique real solution to the
equation
\begin{equation}
\label{def.la0}
\lambda_{0}(\lambda_{0}-1)(\lambda_{0}-2)-12\left(\frac{2}{\gamma+1}\lambda_{0}+2\right)=0,
\end{equation}
where $5.5693\cdots $ is the unique real solution to the equation
\begin{equation}
\label{def.5.5}
\lambda_{0}(\lambda_{0}-1)(\lambda_{0}-2)-12(\lambda_{0}+2)=0.
\end{equation}
Assume that
$\lambda\in[4,\lambda_{0})$ is satisfied. For arbitrary
$\varepsilon\in(0,\lambda]$ and $\theta\in(0,1],$ there exists a
positive constant $\delta$ such that if
$\beta/(\Gamma \phi_{b}^{\frac{1}{2}} )\in [\theta,1]$,
$$
((1+\beta
x)^{\frac{\lambda}{2}}\varphi_{0},(1+\beta
x)^{\frac{\lambda}{2}}\psi_{0},(1+\beta
x)^{\frac{\lambda}{2}}\zeta_{0})\in(H^2(\mathbb{R}_{+}))^{3}
$$
and
$$
|u_{e}/|u_{\infty}|-1|+\|((1+\beta x)^{\frac{\lambda}{2}}\varphi_{0},(1+\beta
x)^{\frac{\lambda}{2}}\psi_{0},(1+\beta
x)^{\frac{\lambda}{2}}\zeta_{0})\|_{H^{2}}/\beta^{3}+|r_{0}|\leq\delta
$$
are satisfied, the initial boundary value problem
\eqref{1.16}-\eqref{1.19} has a unique solution as
$$
((1+\beta
x)^{\frac{\varepsilon}{2}}\varphi,(1+\beta
x)^{\frac{\varepsilon}{2}}\psi,(1+\beta
x)^{\frac{\varepsilon}{2}}\zeta,(1+\beta
x)^{\frac{\varepsilon}{2}}\sigma)\in
(\mathscr{X}_{2}(\mathbb{R}_{+}))^{3} \times
\mathscr{X}_{2}^{2}(\mathbb{R}_{+}).
$$
Moreover, the solution
$(\varphi,\psi,\zeta,\sigma)$ verifies the decay estimate
\begin{equation*}%\label{1.23}
\begin{aligned}
&\|((1+\beta x)^{\frac{\varepsilon}{2}}\varphi,(1+\beta
x)^{\frac{\varepsilon}{2}}\psi,(1+\beta
x)^{\frac{\varepsilon}{2}}\zeta)(t)\|_{H^{2}}^{2}+\|(1+\beta
x)^{\frac{\varepsilon}{2}}\sigma(t)\|_{H^{4}}^{2}\\
& \leq C(\|((1+\beta x)^{\frac{\lambda}{2}}\varphi_{0},(1+\beta
x)^{\frac{\lambda}{2}}\psi_{0},(1+\beta
x)^{\frac{\lambda}{2}}\zeta_{0})\|_{H^{2}}^{2}+r_{0}^{2})(1+\beta
t)^{-(\lambda-\varepsilon)/3},
\end{aligned}
\end{equation*}
where $C$ is a  positive
constant independent of $t$.
\end {theorem}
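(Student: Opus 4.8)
The plan is to combine a routine local-existence result with a uniform a priori weighted-energy estimate, close global existence by continuation, and then extract the algebraic rate from the structure of the resulting dissipation hierarchy. Throughout I would use the sheath bounds of Proposition~\ref{prop1.1}: in the degenerate case $G(x)=\phi_{b}^{-1/2}(1+\beta x)$ up to the factor $\beta/(\Gamma\phi_{b}^{1/2})\in[\theta,1]$, so \eqref{1.14d} gives $|\partial_{x}^{i}(\tilde n-1,\tilde u-u_{\infty},\tilde T-T_{\infty},\tilde\phi)|\lesssim\beta^{2+i}(1+\beta x)^{-(2+i)}$ for $0\le i\le 3$, the constant depending on $\theta$; in particular the profile derivatives $\tilde v_{x},\tilde u_{x},\tilde T_{x}$ that appear on the right-hand side of \eqref{1.16} are $O(\beta^{3}(1+\beta x)^{-3})$, which is why the smallness hypothesis carries $\beta^{3}$ in the denominator. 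Local solvability of \eqref{1.16}--\eqref{1.19} in $(\mathscr{X}_{2}(\mathbb{R}_{+}))^{3}\times\mathscr{X}_{2}^{2}(\mathbb{R}_{+})$ with the prescribed weights is standard: by \eqref{revad1} the hyperbolic block \eqref{1.16} needs no boundary condition, $\sigma$ is recovered from the elliptic equation \eqref{1.17} with the trace $\sigma_{x}(t,0)$ evolving by the ODE \eqref{1.19} from the datum $r_{0}$, and the weights are propagated since they commute with the equations up to lower-order, algebraically controlled remainders. So it suffices to prove that any solution on $[0,T]$ satisfies $N_{\lambda}(T)\lesssim N_{\lambda}(0)$, where $N_{\lambda}(t)^{2}:=\|((1+\beta x)^{\lambda/2}\varphi,(1+\beta x)^{\lambda/2}\psi,(1+\beta x)^{\lambda/2}\zeta)(t)\|_{H^{2}}^{2}+\|(1+\beta x)^{\lambda/2}\sigma(t)\|_{H^{4}}^{2}+\sigma_{x}(t,0)^{2}$, together with the companion estimates for the intermediate weights $w_{k}=(1+\beta x)^{k/2}$, $\varepsilon\le k\le\lambda$; here I write $U:=(\varphi,\psi,\zeta)$.

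For the basic estimate I would multiply \eqref{1.16} by $w_{k}^{2}\,(RT\varphi,\ \psi,\ \tfrac{R}{(\gamma-1)T}\zeta)$: this diagonal multiplier symmetrizes the flux matrix, whose symmetrized form is negative definite by \eqref{revad1}, so integration by parts turns $\partial_{x}(w_{k}^{2})=k\beta(1+\beta x)^{-1}w_{k}^{2}$ into the positive dissipation $c\,k\beta\int_{\mathbb{R}_{+}}(1+\beta x)^{k-1}|U|^{2}\,dx$ and leaves a nonnegative boundary term $\sim w_{k}(0)^{2}|U(t,0)|^{2}$, which I retain to control the traces. The coupling to $\sigma_{x}$ in the momentum equation I would handle as usual: eliminate $\psi_{x}$ by the continuity equation, then differentiate the Poisson equation \eqref{1.17} in $t$ and test against $w_{k}^{2}\sigma$, producing $\frac{d}{dt}\|w_{k}\sigma_{x}\|^{2}$ and, by elliptic regularity for \eqref{1.17}, slaving $\|w_{k}\sigma\|_{H^{1}}^{2}$ to $\|w_{k}\varphi\|^{2}$ up to boundary terms. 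Using the sheath bounds to see that the remainders from $\tilde v_{x},\tilde u_{x},\tilde T_{x}$ are $O(\beta^{2})$ times the dissipation, one obtains, for $\varepsilon\le k\le\lambda$,
\[
\frac{d}{dt}\mathcal{E}_{k}+\mathcal{D}_{k}\lesssim(\text{boundary terms at }x=0)+\big(N_{\lambda}(T)+\beta\big)\,\mathcal{D}_{k},\qquad \mathcal{E}_{k}\simeq\|w_{k}U\|^{2}+\|w_{k}(\sigma,\sigma_{x})\|^{2},
\]
with $\mathcal{D}_{k}\gtrsim k\beta\int(1+\beta x)^{k-1}|U|^{2}\,dx+\|w_{k}\sigma\|_{H^{1}}^{2}$.

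Next I would dispatch the boundary terms, which are quadratic in $(\varphi,\psi,\sigma,\sigma_{x})(t,0)$ with indefinite sign, by appending $\tfrac12 w_{k}(0)^{2}\sigma_{x}(t,0)^{2}$ to $\mathcal{E}_{k}$ and differentiating \eqref{1.19}: since $(\tilde n\tilde u)(0)=u_{\infty}=-|u_{\infty}|$ and $\tilde\phi(0)=\phi_{b}$, the right-hand side of \eqref{1.19} linearizes to $-\tilde n(0)\psi(0)+|u_{\infty}|\varphi(0)+|u_{\infty}|\sigma(0)$ plus quadratic terms, so the indefinite contributions are absorbed by the nonnegative trace term $\sim w_{k}(0)^{2}|U(t,0)|^{2}$ and a Cauchy--Schwarz split against the small quantities $\beta$, $|u_{e}/|u_{\infty}|-1|$, $|r_{0}|$; as a by-product $\sigma_{x}(t,0)$ acquires the stated decay. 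This is the nonisentropic analogue of the boundary analysis in \cite{boundaawa}. Then I would run $\partial_{x}$, $\partial_{x}^{2}$ (and the mixed time derivatives) through \eqref{1.16}--\eqref{1.17}, repeating the argument and bootstrapping $\sigma$ from $H^{1}$ to $H^{3}$ and then $H^{4}$ via \eqref{1.17}. At the top order the derivatives falling on the weight generate the coefficients $\partial_{x}^{2}(w_{k}^{2})\sim k(k-1)\beta^{2}(1+\beta x)^{k-2}$ and $\partial_{x}^{3}(w_{k}^{2})\sim k(k-1)(k-2)\beta^{3}(1+\beta x)^{k-3}$, which must be dominated, at $k=\lambda$, by the dissipation once the $\beta^{2}(1+\beta x)^{-2}$-sized sheath factors of \eqref{1.14d} are extracted; requiring $\lambda(\lambda-1)(\lambda-2)$ to be beaten by $12(\tfrac{2}{\gamma+1}\lambda+2)$ is exactly the condition $\lambda<\lambda_{0}$ of \eqref{def.la0}. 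Summing the $\mathcal{E}_{k}$ over derivative orders $\le 2$ and over $\varepsilon\le k\le\lambda$, and absorbing the cubic and boundary remainders for small $\beta$, $N_{\lambda}$, $|u_{e}/|u_{\infty}|-1|$, $|r_{0}|$, closes the a priori estimate and hence global existence by continuity.

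Finally, for the rate, the hierarchy takes the form $\frac{d}{dt}\mathcal{E}_{k}^{\mathrm{tot}}+c\,\mathcal{D}_{k}^{\mathrm{tot}}\le 0$ in which, because of the degenerate (cubic) structure just exploited, the dissipation at level $k$ controls only the energy at level $k-3$; combined with boundedness of the unweighted energy $\mathcal{E}_{0}^{\mathrm{tot}}$ and the interpolation $\int(1+\beta x)^{k-3}|U|^{2}\le(\int(1+\beta x)^{k}|U|^{2})^{1-3/k}(\int|U|^{2})^{3/k}$, a time-weighted iteration yields $\mathcal{E}_{\varepsilon}^{\mathrm{tot}}(t)\lesssim(1+\beta t)^{-(\lambda-\varepsilon)/3}$, the asserted decay. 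I expect the main obstacle to be the top-order step above: faithfully tracking how the only-algebraically-decaying sheath interacts with up to three derivatives of the polynomial weight, so as to both pin the admissible exponents to $[4,\lambda_{0})$ and keep every error absorbable; a close second is carrying the boundary ODE estimate for $\sigma_{x}(t,0)$ consistently through each derivative order and feeding it back into the energy hierarchy without spoiling the small factors.
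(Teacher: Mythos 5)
Your overall skeleton (local existence plus weighted a priori estimates plus continuation, then a space--time weight induction for the rate) coincides with the paper's, but the two steps that make the \emph{degenerate} case work are missing or wrong. First, your claimed basic dissipation $\mathcal{D}_{k}\gtrsim k\beta\int(1+\beta x)^{k-1}|U|^{2}dx$ is exactly what fails when $mu_{\infty}^{2}=\gamma RT_{\infty}+1$. After the unavoidable Cauchy--Schwarz on the coupling term $\sigma\psi$ in the zero-order flux and the elimination of the resulting bad $-\tfrac{|u_{\infty}|}{2}\sigma^{2}$ through the Poisson equation \eqref{1.17}, the remaining quadratic form in $(\varphi,\psi,\zeta)$ has determinant proportional to $mu_{\infty}^{2}-(\gamma RT_{\infty}+1)$, i.e.\ it is only positive \emph{semi}definite under \eqref{1.15b} (this is why the same computation in the nondegenerate section, \eqref{3.13de}, explicitly invokes the strict inequality). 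Consequently the profile terms of size $\Gamma G^{-3}\sim\beta^{3}(1+\beta x)^{-3}$ cannot be dumped into the error as ``$O(\beta^{2})$ times the dissipation'', as you do: they are the \emph{source} of the zero-order dissipation and must be kept with their sign and combined with the weight-derivative terms (the paper's $I_{1}+I_{2}$), together with the $\varepsilon(\varepsilon-1)(\varepsilon-2)\beta^{3}W_{\varepsilon-3,\beta}$ contribution generated by testing \eqref{2.3} against $-\varepsilon\beta\sigma W_{\varepsilon-1,\beta}$. The positivity of the resulting quadratic form $Q(x)$ in \eqref{2.1gh} is where the cubic constraint $\varepsilon(\varepsilon-1)(\varepsilon-2)<12\bigl(\tfrac{2}{\gamma+1}\varepsilon+2\bigr)$, hence $\lambda<\lambda_{0}$, really comes from — at the \emph{zeroth} order, not from weight derivatives in the top-order estimates as you assert — and it yields only $\beta^{3}\|(\varphi,\psi,\zeta)\|_{\varepsilon-3,\beta}^{2}$, which is the origin of the exponent $(\lambda-\varepsilon)/3$. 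Your final paragraph, which suddenly claims a three-level weight loss, contradicts the $\mathcal{D}_{k}$ you constructed; that inconsistency is the symptom of the missing argument.

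Second, the boundary treatment is not closed by ``appending $\tfrac12 w_{k}(0)^{2}\sigma_{x}(t,0)^{2}$ and Cauchy--Schwarz on the linearized \eqref{1.19}''. The zero-order energy flux at $x=0$ contains the term $-\bigl(\tfrac{|u_{\infty}|}{2}+C\delta\bigr)[\sigma^{2}]_{x=0}$ (see \eqref{2.11b}), for which your scheme produces no favorable sign: differentiating $\tfrac12\sigma_{x}(t,0)^{2}$ only yields indefinite products such as $\sigma(t,0)\sigma_{x}(t,0)$. The paper resolves this with a second, structurally different energy identity (Lemma \ref{main.resultloik0}): rewriting $\tilde n\psi_{x}\sigma$ via the continuity equation and the time-differentiated Poisson equation so that, after inserting the interaction condition \eqref{1.19}, the boundary flux becomes positive definite in $[\varphi,\psi,\zeta,\sigma,\sigma_{x}]_{x=0}$ (\eqref{2.8new}); since that identity has uncontrolled interior terms while Lemma \ref{main.result2.9} has uncontrolled boundary terms, the two are combined with two distinct weight parameters $\beta_{1}\ll\beta$ and small coupling constants. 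Neither the dissipative rewriting nor the two-weight device appears in your proposal, and without them neither the boundary terms nor the degenerate interior dissipation can be closed.
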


Since there are many occasions in which boundaries are insulated from other
conductors, in particular when we treat plasma flows outside laboratories. In such
situations, once charged particles are incident on the boundaries, they keep accumulating on the surface as expressed by \eqref{1.5b}. Then an interaction between the fluid
and the boundary occurs because the quantity of charged particles on the boundary
is proportional to the potential gradient there and hence has an influence on the electrostatic potential over the entire domain, which in turn affects the flux of charged
particles toward the boundary. To the best of our knowledge, no mathematical work has been done concerning the nonisentropic Euler-Poisson system with the fluid-boundary interaction.
For the isentropic Euler-Poisson system, Ohnawa \cite{boundaawa} studied the existence and asymptotic stability of boundary layers with the fluid-boundary interaction condition. Inspired by \cite{boundaawa}, we expect to consider the effect of the variable temperature for the nonisentropic Euler-Poisson system with the additional evolution equation of temperature function. In fact, in comparison with \cite{boundaawa}, we need to make additional efforts to consider the effect of the temperature equations in the proof.
Here we explain several crucial points in the proof of main results:

 $\bullet$
Technically, we observe that the only zero order dissipative term is associated with the weight parameter $\beta$, since this term arises from the integration by part of
the energy flux $I_{1}$ in \eqref{2.7}. From the physical viewpoint, the convection
can be said to stabilize the system. In terms of the property of the stationary solution
in Proposition \ref{prop1.1}, two integral terms $I_{1}$ and $I_{2}$ in \eqref{2.7} should be added together to estimate. In fact, to estimate $I_{1}+I_{2}$ in the degenerate case \eqref{1.15b}, a key point is to derive the positive definiteness of the quadratic form $Q(x)$ in \eqref{2.1gh} that takes a complex form. The same situation occurs to the proof of Lemma \ref{main.result2.987} for the nondegenerate case \eqref{1.15a}.

$\bullet$
 In the one-dimensional Dirichlet problem \cite{SIAm}, estimates up to the first order
derivatives are completed only by the same computations as in Lemma \ref{main.result2.9}. For
the present problem, however, Lemma \ref{main.result2.9} is not sufficient because a boundary term $\int_{0}^{t}(1+\beta
\tau)^{\xi}[\sigma^{2}+\sigma_{x}^{2}]_{x=0}d\tau$
appears on the right-hand side of \eqref{2.10}. To handle the boundary term, another
estimate is derived in Lemma \ref{main.resultloik0} by rewriting the fourth term of \eqref{2.1}. The idea was used
in the Dirichlet problem \cite{NOS}, but it was necessary only for two- or three-dimensional
cases. The idea was also used in the isentropic Euler-Poisson system with the fluid-boundary interaction problem \cite{boundaawa}.
In fact, the dissipative term on the boundary appears due to the fluid-boundary interaction.

$\bullet$ Technically, the estimate \eqref{2.10} has an integral term on the left-hand side while having a
boundary term on the other. The estimate \eqref{2.uykjui} has an opposite property. Thus we choose
two different weight functions as $(1+\beta x)^{\lambda}$ and $(1+\beta_{1} x)^{\lambda}$ $(\beta_{1}\ll\beta)$ to
lead to a favorable estimate.

The rest of the paper is arranged as follows. In Section 2, we give the energy estimates for the degenerate case. We make full use of the time-space weighted energy method to complete the proof of Theorem \ref{1.3 theorem}. In Section 3, we give the  energy estimates for the
nondegenerate case and complete the
proof of Theorem \ref{1.2theorem}. In the Appendix, we will give some basic results used in the proof of Proposition \ref{ste.pro1.1} and Proposition \ref{ste.pro2}.

\medskip
\noindent{\it Notations.} Throughout this paper, we denote a
positive constant (generally large) independent of $t$ by $C$. And
the character ``$C$" may take different values in different places.
 $L^p
= L^p(\mathbb{R}_{+}) \ (1 \leq p \leq \infty)$ denotes
 the usual Lebesgue space on $[0
,\infty)$ with its norm $ \|\cdot\|_ {L^p}$, and when $p=2$, we
write $ \| \cdot \| _{ L^2(\mathbb{R}_{+}) } = \| \cdot \|$. For a
nonnegative integer $s$,  $W^{s,p}$ denotes the usual $s$-th order
Sobolev space over $ [0,\infty)$ with its norm
$\|\cdot\|_{W^{s,p}}$. We use the abbreviation $H^s
(\mathbb{R}_{+})=W^{s,2}(\mathbb{R}_{+})$.
$C^{k}([0,T];H^s(\mathbb{R}_{+}))$ denotes the space of the $k$-times
continuously differential functions on the interval $[0,T]$ with
values in $H^s(\mathbb{R}_{+})$.
A norm with an algebraic weight is defined as follows:
$$\|f\|_{\alpha,\beta,i}:=\left(\int W_{\alpha,\beta}\sum_{j\leq
i}(\partial^{j}f)^{2}dx\right)^{\frac{1}{2}}, \ \ \ \ i,j\in
\mathbb{Z}, \ \ \ i,j\geq 0,$$
\begin{equation}
\label{def.W}
W_{\alpha,\beta}:=(1+\beta x)^{\alpha}, \ \ \ \ \alpha,\ \beta \in \mathbb{R},\ \ \ \beta>0.
\end{equation}
Note that this norm is equivalent to the norm defined by $\|(1+\beta
x)^{\frac{\alpha}{2}}f\|_{H^{i}}.$ The
 last subscript $i$ of $\|f\|_{\alpha,\beta,i}$ is often dropped for the case of $i=0$,
 namely, $\|f\|_{\alpha,\beta}:=\|f\|_{\alpha,\beta,0}$.

\section{Energy estimates in the degenerate case}
In this section, we study the asymptotic stability of the
sheath to \eqref{1.1} for the degenerate case
\eqref{1.15b}, where the Bohm criterion is marginally fulfilled. In this case, we see from Proposition \ref{prop1.1} that the additional condition that $\phi_{b}>0$ is suitably small ensures the existence of a non-trivial monotone stationary solution to \eqref{1.7}-\eqref{1.gfb}. To further show the dynamical stability of the sheath, we mainly focus on the {\it a priori} estimates that will be given in Proposition \ref{ste.pro1.1}. The global existence can be proved by the standard continuation argument based on the local
existence result together with the uniform {\it a priori} estimates. The local-in-time existence result can be proved by a similar method as in \cite{boundaawa,SIAm} and we omit the  details of the proof for brevity.

In what follows we are devoted to establishing the {\it a priori} estimates in the degenerate case \eqref{1.15b}. For this purpose, we use the following notation for
convenience
\begin{eqnarray*}
 \mathcal
{N}_{\alpha,\beta}(M):=\sup_{0\leq t\leq
M}(\|(\varphi,\psi,\zeta)(t)\|_{\alpha,\beta,2}+|\sigma_{x}(t,0)|).
\end{eqnarray*}

\begin{proposition}\label{ste.pro1.1}
Let the same conditions on
$T_{\infty}$, $u_{\infty}$, $\lambda_{0}$ and $\lambda$ as in
Theorem \ref{1.3 theorem} hold and let $(\varphi,\psi,\zeta,\sigma)$ be a solution to
\eqref{1.16}-\eqref{1.19} over $[0,M]$ for $M>0$.
For any $\varepsilon\in(0,\lambda]$ and any $\theta\in(0,1]$,
there exist constants $\delta>0$ and $C>0$ independent of $M$ such that if all the following conditions
\begin{eqnarray}\label{b}
\beta/(\Gamma \phi_{b}^{1/2}) \in [\theta,1],
\end{eqnarray}
\begin{eqnarray}\label{c}
((1+\beta x)^{\frac{\lambda}{2}}\varphi,(1+\beta
x)^{\frac{\lambda}{2}}\psi,(1+\beta
x)^{\frac{\lambda}{2}}\zeta,(1+\beta x)^{\frac{\lambda}{2}}\sigma)
\in (\mathscr{X}_{2}([0,M]))^{3}\times \mathscr{X}_{2}^{2}([0,M]),
\end{eqnarray}
and
\begin{eqnarray}\label{d}
\phi_{b}+\mathcal {N}_{\lambda,\beta}(M)/\beta^{3}\leq \delta
\end{eqnarray}
are satisfied, then it holds for any $0\leq t\leq M$
that
\begin{eqnarray}
\|(\varphi,\psi,\zeta)(t)\|_{\varepsilon,\beta,2}^{2}+\|\sigma(t)\|_{\varepsilon,\beta,4}^{2}\leq
C(\|(\varphi_{0},\psi_{0},\zeta_{0})\|_{\lambda,\beta,2}^{2}+r_{0}^{2})(1+\beta
t)^{-(\lambda-\varepsilon)/3}.\label{prop2.1r1}
\end{eqnarray}
\end{proposition}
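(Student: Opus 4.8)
The plan is to derive the decay estimate \eqref{prop2.1r1} by a time–space weighted energy method, combining a lower-order estimate that produces a dissipative term at the boundary with a second lower-order estimate that consumes it, and then closing the argument at second order via differentiated versions of the same identities. Throughout, the smallness of $\phi_b$ (hence of $\beta$ via \eqref{b}) and of $\mathcal N_{\lambda,\beta}(M)/\beta^3$ in \eqref{d} will be used to absorb all nonlinear and cross terms into the good quadratic dissipation. Concretely, I would first fix the time weight $(1+\beta t)^{\xi}$ for a free exponent $\xi\in[0,\lambda]$ together with the space weight $W_{\xi,\beta}=(1+\beta x)^{\xi}$, multiply the perturbed hyperbolic system \eqref{1.16} on the left by a symmetrizer $A[\tilde u,\tilde T]$ built from the entropy of the system (diagonal-positive, with entries like $RT/n$, $m$, $R/((\gamma-1)T)$ or the natural thermodynamic weights), and integrate over $(0,t)\times\R_+$. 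The electrostatic perturbation $\sigma$ is handled by pairing the Poisson relation \eqref{1.17} with $\sigma_t$ and with $\sigma$; the nonlinear right-hand side $e^{\varphi+\tilde v}-e^{\tilde v}-e^{-(\sigma+\tilde\phi)}+e^{-\tilde\phi}$ is split into its linear part $(e^{\tilde v}+e^{-\tilde\phi})\sigma$-type terms plus a quadratic remainder controlled by $\mathcal N$. Crucially, the flux term $I_1$ coming from the convection (the $u\,\partial_x$ part) integrated by parts against the $x$-derivative of the weight yields the only zeroth-order dissipation, of size $\beta\int(1+\beta t)^\xi\|(\varphi,\psi,\zeta)\|_{\xi-1,\beta}^2$, while the stationary-solution term $I_2$ must be added to it; establishing the positive definiteness of the resulting quadratic form $Q(x)$ (as flagged in the introduction, via \eqref{2.1gh} and the structure of $\tilde u_x,\tilde T_x,\tilde\phi_x$ from Proposition \ref{prop1.1}) is the first technical hurdle.

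Next I would address the boundary terms. After the first energy identity there remains an uncontrolled boundary contribution of the form $\int_0^t(1+\beta\tau)^\xi[\sigma^2+\sigma_x^2]_{x=0}\,d\tau$ together with the flux $[\,\cdot\,]_{x=0}$ from $I_1$; the fluid–boundary interaction condition \eqref{1.19}, $\sigma_{xt}(t,0)=[-e^v\psi-\tilde u(e^v-e^{\tilde v})+u_\infty(e^{-\sigma}-1)](t,0)$, is what makes a \emph{favorable} boundary dissipation appear. I would exploit this exactly as in \cite{boundaawa,NOS}: rewrite the fourth term of the basic identity \eqref{2.1} (the $\sigma_x$ source in the $\psi$-equation) so that, upon integration by parts in $x$ and using the Poisson equation to trade $\sigma_{xx}$ for the linearized right-hand side, the boundary value $\sigma_x(t,0)$ gets differentiated in $t$ and \eqref{1.19} is substituted, producing $-\frac12\frac{d}{dt}[\,(1+\beta t)^\xi\,(\text{quadratic in }\sigma_x(t,0))\,]$ plus a genuinely negative-definite term $\sim[\sigma_x^2]_{x=0}$ (modulo cubic errors $\lesssim\mathcal N\cdot|\sigma_x(t,0)|^2$). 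This is the content of the analog of Lemma \ref{main.resultloik0}, and combining it with a small multiple of the first estimate lets the boundary term on the right of the analog of \eqref{2.10} be absorbed into the boundary dissipation on the left. I would then iterate the whole scheme on $\partial_x$ and $\partial_x^2$ of \eqref{1.16}–\eqref{1.17} (the time derivatives being recovered from the equations themselves, which is why the function space $\mathscr X_2$ is the right one), noting that derivatives of the stationary coefficients are exponentially small by Proposition \ref{prop1.1}(nondegenerate) or bounded by $C\phi_b$ through the $G^{-(i+2)}$ expansions \eqref{1.14d} in the degenerate case, so they contribute only lower-order perturbations.

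Once the weighted energy inequality
$$
\frac{d}{dt}\,\mathcal E_\xi(t)+\beta\,\mathcal D_\xi(t)\le C\big(\phi_b+\mathcal N_{\lambda,\beta}(M)/\beta^3\big)\,\big(\text{good terms}\big)
$$
is in hand, with $\mathcal E_\xi(t)\simeq(1+\beta t)^\xi\big(\|(\varphi,\psi,\zeta)\|_{\xi,\beta,2}^2+\|\sigma\|_{\xi,\beta,4}^2+|\sigma_x(t,0)|^2\big)$ and $\mathcal D_\xi$ dominating $(1+\beta t)^{\xi-1}$ times the same quantity at weight $\xi-1$ plus the boundary dissipation, the decay is extracted by the standard interpolation/time-weight induction: running the inequality successively with $\xi=0,1,\dots$ up to $\xi=\lambda$ — or directly with the pair of exponents $\varepsilon$ and $\lambda$ and the two nested space weights $(1+\beta x)^\lambda$ and $(1+\beta_1 x)^\lambda$ with $\beta_1\ll\beta$ as indicated in the introduction, to reconcile the mismatch between the integral term and the boundary term in \eqref{2.10} versus \eqref{2.uykjui} — gives $\mathcal E_\varepsilon(t)\le C(\|(\varphi_0,\psi_0,\zeta_0)\|_{\lambda,\beta,2}^2+r_0^2)$, i.e. \eqref{prop2.1r1} after dividing by $(1+\beta t)^\varepsilon$ and using that in the degenerate regime $\beta\sim\Gamma\phi_b^{1/2}$, so $\beta t\sim\phi_b^{1/2}t$ and the exponent $-(\lambda-\varepsilon)/3$ is the one produced by the cubic structure of $Q(x)$ and of the degenerate stationary profile. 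The main obstacle, as stressed in the paper itself, is the positive definiteness of the quadratic form $Q(x)$ arising from $I_1+I_2$ in the degenerate case — its coefficients involve the delicate cancellations encoded in $\lambda_0$ through \eqref{def.la0}, and getting the sign right is what forces the restriction $\lambda\in[4,\lambda_0)$; the secondary difficulty is the careful bookkeeping of the boundary terms so that the fluid–boundary interaction \eqref{1.19} is used to produce, rather than merely to estimate, the boundary dissipation.
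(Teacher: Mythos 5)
Your proposal follows essentially the same route as the paper: the symmetrized weighted energy identity whose convection flux $I_1$ must be combined with the stationary-profile term $I_2$ and reduced to the positivity of the quadratic form $Q(x)$ (the step where the restriction $\lambda\in[4,\lambda_0)$ enters), the extra boundary dissipation obtained by rewriting the $\psi_x\sigma$ coupling and inserting the interaction condition \eqref{1.19} with two nested space weights $\beta_1\ll\beta$, and the Kawashima--Matsumura/Nishikawa time-weight induction with exponent $\xi=(\lambda-\varepsilon)/3+\kappa$, the $1/3$ coming from the three-power weight loss $\beta^3\|\cdot\|_{\varepsilon-3,\beta}^2$ in the degenerate interior dissipation. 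The only notable deviation is that you propose iterating in $\partial_x,\partial_x^2$ and recovering time derivatives from the equations, whereas the paper differentiates in $t$ (and in $t,x$) and recovers the pure second-order $x$-derivatives from the equations (Lemmata \ref{main.result4.3}--\ref{main.result4.4}), a choice dictated by the fact that \eqref{1.19} and its $t$-derivative directly supply the top-order boundary control needed in the analogue of Lemma \ref{mainboundary2g}; your variant is likely repairable but would have to invoke the Poisson equation at $x=0$ to produce the boundary data that \eqref{1.19} does not give for pure $x$-derivatives.
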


 For the proof of Proposition \ref{ste.pro1.1}, only the algebraic weight technique seems to be effective to the degenerate problem. We need to first prove Lemmata \ref{main.result2.9}  and  \ref{main.resultloik0} which are crucial steps for deriving the {\it a priori} estimates on the zeroth order and first order space derivatives. After that, we give the estimates for the higher order derivatives in
 Lemmata \ref{main.result2.erer9} and \ref{mainboundary2g}. Proposition \ref{ste.pro1.1} is then proved by following Lemmata $\ref{main.result2.9}$--$\ref{mainboundary2g}$ at the end of this section.

\begin{lemma}\label{main.result2.9}
Under the same conditions as in Proposition \ref{ste.pro1.1}, there exist
positive constants $C$ and $\delta$ independent of $M$ such that if  conditions \eqref{b}-\eqref{d} are satisfied, it holds for
any $t\in[0,M]$ and any $\xi\geq 0$ that
\begin{align}
 &(1+\beta t)^{\xi}\|(\varphi,\psi,\zeta)(t)\|_{\varepsilon,\beta,1}^{2}\notag\\
&\quad +\int_{0}^{t} (1+\beta
 \tau)^{\xi}\left\{\beta^{3}\|(\varphi,\psi,\zeta)\|_{\varepsilon-3,\beta}^{2}
 +\beta\|(\varphi_{x},\psi_{x},\zeta_{x},\sigma_{x})\|_{\varepsilon-1,\beta}^{2}\right\}d\tau\notag\\
& \leq C\|(\varphi_{0},\psi_{0},\zeta_{0})\|_{\varepsilon,\beta,1}^{2}
 +C\xi\beta\int_{0}^{t} (1+\beta
 \tau)^{\xi-1}\|(\varphi,\psi,\zeta)\|_{\varepsilon,\beta,1}^{2}
 d\tau\notag\\
 &\quad+C\int_{0}^{t}(1+\beta
\tau)^{\xi}[\sigma^{2}+\sigma_{x}^{2}]_{x=0}d\tau.
\label{2.10}
 \end{align}

\end{lemma}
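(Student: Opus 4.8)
The plan is to derive the basic weighted energy estimate \eqref{2.10} by contracting the perturbation system \eqref{1.16}--\eqref{1.17} against a suitable symmetrizer multiplied by the time-space weight $(1+\beta t)^\xi W_{\varepsilon,\beta}$, then adding the contribution of the Poisson equation \eqref{1.17}. The first step is to symmetrize \eqref{1.16}: multiplying the hyperbolic block on the left by the diagonal positive-definite matrix with entries $(RT/n,\,1/n,\,R/((\gamma-1)T))$ (the standard entropy symmetrizer for the nonisentropic Euler equations, written in the variables $(\varphi,\psi,\zeta)=(\log n-\log\tilde n,\,u-\tilde u,\,T-\tilde T)$) turns the $\partial_x$-coefficient into a symmetric matrix whose lowest eigenvalue is bounded away from $0$ by \eqref{revad1}, i.e.\ by the strict negativity of all three characteristic speeds under the Bohm criterion \eqref{1.15}. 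Forming $\tfrac12\partial_t$ of the quadratic energy density against this symmetrizer and integrating over $\mathbb{R}_+$, I would integrate by parts in $x$ the convection term; because all $\lambda_j<0$ the boundary term at $x=0$ has a good sign (or is controlled), and the spatial integration by parts of the flux $I_1$ produces, after differentiating the weight $\partial_x W_{\varepsilon,\beta}=\varepsilon\beta(1+\beta x)^{\varepsilon-1}$, precisely the zeroth-order dissipative term $\beta\|(\varphi,\psi,\zeta)\|_{\varepsilon-1,\beta}^2$ — this is the mechanism flagged in the introduction that the only zeroth-order dissipation is proportional to $\beta$.

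Next I would handle the coupling terms. The right-hand side of \eqref{1.16} contains the forcing $\sigma_x$ in the $\psi$-equation and the terms involving $(\tilde v,\tilde u,\tilde T)_x$; by Proposition \ref{prop1.1}, in the degenerate case the derivatives of the stationary solution decay like $G(x)^{-2}$ with $G(x)=\Gamma x+\phi_b^{-1/2}$, and the hypothesis \eqref{b} $\beta/(\Gamma\phi_b^{1/2})\in[\theta,1]$ makes $\beta\sim\Gamma\phi_b^{1/2}$ comparable to $G(0)^{-1}$, so that $|(\tilde v,\tilde u,\tilde T)_x|\lesssim \beta^2(1+\beta x)^{-2}$ uniformly; these terms are then absorbed into the $\beta^3\|(\varphi,\psi,\zeta)\|_{\varepsilon-3,\beta}^2$ dissipation (three powers of $\beta$: two from the stationary derivative, one from the algebraic-weight gain $(1+\beta x)^{-2}$ relative to $W_{\varepsilon-1,\beta}$, matching $W_{\varepsilon-3,\beta}$) once $\phi_b$ is small, which is where \eqref{d} enters. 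For the $\sigma_x$ term I would use the Poisson equation: multiply \eqref{1.17} by $(1+\beta t)^\xi W_{\varepsilon,\beta}\sigma$, integrate by parts to get $(1+\beta t)^\xi\!\int W_{\varepsilon,\beta}(\sigma_x^2+\cdots)$, linearize the right side as $e^{\tilde v}\varphi+e^{-\tilde\phi}\sigma+(\text{h.o.t.})$ so that the $\sigma$-mass and $\sigma_x$-mass appear with good signs (the coefficient $e^{-\tilde\phi}\ge 1$ and $e^{\tilde v}>0$), and cross-absorb the $\varphi\sigma$ term against the $\varphi$-dissipation; the remaining boundary contributions from integrating $W_{\varepsilon,\beta}\sigma\sigma_x$ by parts at $x=0$ are exactly the $[\sigma^2+\sigma_x^2]_{x=0}$ terms on the right of \eqref{2.10}. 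Differentiating the whole identity once in $x$ (to reach $\mathscr{X}_{\cdot,\cdot,1}$) and repeating gives the $\beta\|(\varphi_x,\psi_x,\zeta_x,\sigma_x)\|_{\varepsilon-1,\beta}^2$ piece; the commutators with $\partial_x$ of the stationary coefficients are again $O(\beta^2(1+\beta x)^{-2})$ and go into the dissipation.

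Finally I would multiply the assembled differential inequality
$\tfrac{d}{dt}\big[(1+\beta t)^\xi E(t)\big] + (1+\beta t)^\xi D(t) \le \xi\beta(1+\beta t)^{\xi-1}E(t) + C(1+\beta t)^\xi[\sigma^2+\sigma_x^2]_{x=0}$
— where $E\simeq\|(\varphi,\psi,\zeta)\|_{\varepsilon,\beta,1}^2$ is the symmetrizer-plus-Poisson energy and $D$ is the dissipation on the left of \eqref{2.10} — and integrate from $0$ to $t$; the $\xi\beta(1+\beta t)^{\xi-1}E$ term on the right produces the second term on the right of \eqref{2.10}, and the initial energy $E(0)\lesssim\|(\varphi_0,\psi_0,\zeta_0)\|_{\varepsilon,\beta,1}^2$ (here the $\sigma(0)$-contribution is controlled via \eqref{1.19news} and the elliptic estimate for \eqref{1.17} at $t=0$, so $r_0^2$ does not yet appear at this order). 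I expect the main obstacle to be the sign/positive-definiteness bookkeeping when the flux terms $I_1$ and $I_2$ are combined: one must show that after using the stationary relations \eqref{1.7} to rewrite $(\tilde v,\tilde u,\tilde T)_x$, the resulting quadratic form $Q(x)$ in $(\varphi,\psi,\zeta)$ stays nonnegative (modulo the $\beta^3$-small dissipation), uniformly in $x\ge 0$ and in the regime \eqref{b}; this is the step where the algebraic structure of the nonisentropic system genuinely differs from the isentropic case of \cite{boundaawa} and requires the explicit degenerate-case expansions of Proposition \ref{prop1.1}.
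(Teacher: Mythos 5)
Your proposal has two genuine gaps, both at the heart of this lemma. First, you assert that integrating by parts the convective flux against the weight ($I_{1}$) already yields the zeroth-order dissipation $\beta\|(\varphi,\psi,\zeta)\|_{\varepsilon-1,\beta}^{2}$ thanks to the strict negativity of the characteristics. That is the \emph{nondegenerate} mechanism (it is how Lemma \ref{main.result2.987} works), but Lemma \ref{main.result2.9} is set in the degenerate case \eqref{1.15b}, where the Bohm criterion holds with equality: after the Cauchy--Schwarz step on the $\sigma\psi$ term coming from $\CH_0$, the quadratic form produced by $I_{1}$ alone is only positive \emph{semi}definite, which is precisely why the lemma claims only $\beta^{3}\|(\varphi,\psi,\zeta)\|_{\varepsilon-3,\beta}^{2}$. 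In the paper the zeroth-order dissipation is extracted from the combination $I_{1}+I_{2}$, where $I_{2}$ consists of the stationary-profile terms decaying only like $G^{-3}$; these are not error terms to be ``absorbed'' (as you first propose) but the very source of coercivity. The bad $\sigma^{2}$ contribution is removed by testing the expanded Poisson equation \eqref{2.3} with $-\varepsilon\beta W_{\varepsilon-1,\beta}\sigma$, which injects the third weight derivative $\varepsilon(\varepsilon-1)(\varepsilon-2)\beta^{3}$ into the coefficient $q_{1}$, and the resulting form $Q(x)$ in \eqref{2.1gh} is positive definite only because $\varepsilon\le\lambda<\lambda_{0}$ with $\lambda_{0}$ the root of the cubic \eqref{def.la0}; see \eqref{2.20p1}. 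You flag ``positive definiteness of $Q(x)$'' as an expected obstacle but give no argument, and nowhere do you explain how the restriction $\lambda<\lambda_{0}$ enters --- yet this computation \emph{is} the proof.

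Second, your treatment of the electrostatic coupling is not viable as described. Multiplying \eqref{1.17} by $W_{\varepsilon,\beta}\sigma$ only gives an elliptic bound of $\sigma,\sigma_{x}$ by $\varphi$ (essentially Lemma \ref{main.result4}); it does not cancel the $O(1)$ antisymmetric coupling term $\int W\,\tilde{n}\psi_{x}\sigma\,dx$, which carries no smallness and cannot be absorbed (in particular, a term of size $\|\varphi\|_{\varepsilon,\beta}^{2}$ can never be absorbed into the available dissipation $\beta^{3}\|\varphi\|_{\varepsilon-3,\beta}^{2}$, because both the power of $\beta$ and the weight are weaker). The paper's proof hinges on the structural identity \eqref{2.3p1}--\eqref{2.4}: the zeroth-order identity \eqref{2.1} is multiplied by $e^{-\tilde{\phi}}$ and added to the first-order identity \eqref{2.2}, so that $\tilde{n}\psi_{x}\bigl(e^{-\tilde{\phi}}\sigma-\sigma_{xx}\bigr)$ is rewritten via the Poisson and continuity equations as $\bigl(\tfrac12\tilde{n}^{2}\varphi^{2}\bigr)_{t}+\bigl(\tfrac12\tilde{n}^{2}u\varphi^{2}\bigr)_{x}$ plus controllable terms; equivalently, the electrostatic energy is incorporated into the energy functional. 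Without this step (or the alternative rewriting used in Lemma \ref{main.resultloik0}, which instead produces the boundary terms handled there), your energy inequality cannot close, so the proposal as written does not prove \eqref{2.10}.
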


\begin{proof}
We start to derive from \eqref{1.16} and \eqref{1.17} several identities which will are used in the late energy estimates. First, it is convenient to rewrite \eqref{1.16} as
\begin{eqnarray}\label{2.0}
\begin{aligned}[b]
\left(\begin{array} {ccc}
RT \ \ & 0 & \ \ 0\\
0 \ \ & m & \ \ 0\\
0 \ \ & 0 & \ \ \frac{R}{(\gamma-1)T} \\
\end{array} \right)
\left(\begin{array} {c}
\varphi \\
\psi\\
\zeta\\
\end{array} \right)_{t}&+\left(\begin{array} {ccc}
RTu \ \ & RT & \ \ 0\\
RT \ \ & mu & \ \ R\\
0 \ \ & R & \ \ \frac{R u}{(\gamma-1)T}\\
\end{array} \right)
\left(\begin{array} {c}
\varphi \\
\psi\\
\zeta\\
\end{array} \right)_{x}-\left(\begin{array} {c}
0 \\
\sigma\\
0\\
\end{array} \right)_{x}\\[2mm]
&=-\left(\begin{array} {ccc}
RT\psi \ \ & 0 & \ \ 0\\
R\zeta \ \ & m\psi & \ \ 0\\
0 \ \ & \frac{R\zeta}{T} & \ \ \frac{R\psi}{(\gamma-1)T}\\
\end{array} \right)
\left(\begin{array} {c}
\tilde{v} \\
\tilde{u}\\
\tilde{T}\\
\end{array} \right)_{x}.
\end{aligned}
\end{eqnarray}
Taking the inner product of \eqref{2.0} with $\tilde{n}(\varphi,\psi,\zeta)$ and using $\tilde{v}_{x}=\frac{\tilde{n}_{x}}{\tilde{n}}$, one can get that
\begin{equation}
\label{2.1}
(\CE_0)_t+(\CH_0)_x+\CD_0+\tilde{n}\psi_{x}\sigma=\CR_0,
\end{equation}
where we have denoted
\begin{equation}
\label{def.e0}
\CE_0=\frac{\tilde{n}}{2}RT\varphi^{2}+\frac{\tilde{n}}{2}m\psi^{2}+\frac{\tilde{n}R}{2(\gamma-1)T}\zeta^{2},
\end{equation}
\begin{equation*}
%\label{ }
\CH_0=\frac{\tilde{n}}{2}RTu\varphi^{2}+\tilde{n}RT\varphi\psi+\frac{\tilde{n}}{2}mu\psi^{2}+R\tilde{n}\zeta\psi+\frac{\tilde{n}R
u}{2(\gamma-1)T}\zeta^{2}
-\tilde{n}\sigma\psi,
\end{equation*}
\begin{eqnarray*}
\CD_0 & = & \left(-\frac{R T
u}{2}\tilde{n}_{x}-\frac{R \tilde{n}u}{2}\tilde{T}_{x}-\frac{R
\tilde{n}T}{2}\tilde{u}_{x}\right)\varphi^{2}-\tilde{n}R\tilde{T}_{x}\varphi\psi
+\left(\frac{m\tilde{n}}{2}\tilde{u}_{x}-\frac{mu}{2}\tilde{n}_{x}\right)\psi^{2}\\[2mm]
&&+\frac{R\tilde{n}}{(\gamma-1)T}\tilde{T}_{x}\zeta\psi+\tilde{n}_{x}\sigma\psi+\left(\frac{R\tilde{n}}{T}\tilde{u}_{x}
-\frac{R u
\tilde{n}_{x}+R\tilde{n}\tilde{u}_{x}}{2(\gamma-1)T}+\frac{R u
\tilde{n} \tilde{T}_{x}}{2(\gamma-1)T^{2}}\right)\zeta^{2},
\end{eqnarray*}
and
\begin{eqnarray*}
\CR_0 &=&\left(\frac{\tilde{n}R}{2}\zeta_{t}+\frac{\tilde{n}R
u}{2}\zeta_{x}+\frac{\tilde{n}R
T}{2}\psi_{x}\right)\varphi^{2}+R\tilde{n}\zeta_{x}\varphi\psi+\frac{m\tilde{n}}{2}\psi_{x}\psi^{2}\notag\\[2mm]
&&+\left(\frac{R\tilde{n}\psi_{x}}{2(\gamma-1)T}-\frac{R\tilde{n}\zeta_{t}}{2(\gamma-1)T^{2}}-
\frac{R\tilde{n}u\zeta_{x}}{2(\gamma-1)T^{2}}\right)\zeta^{2}.
\label{def.r0}
\end{eqnarray*}
Taking the one order $x$-derivative on \eqref{2.0}, further taking the inner product of the resulting system with $\tilde{n}(\varphi_{x},\psi_{x},\zeta_{x})$ and using $\tilde{v}_{x}=\frac{\tilde{n}_{x}}{\tilde{n}}$ again, similarly for obtaining \eqref{2.1}, one has
\begin{equation}
\label{2.2}
(\CE_1^{\rm x})_t+(\CH_1^{\rm x})_x-\tilde{n}\psi_{x}\sigma_{xx}=\CR_1^{\rm x},
\end{equation}
where we also have denoted
\begin{equation}
\label{def.e1x}
\CE_1^{\rm x}=\frac{\tilde{n}}{2}RT\varphi_{x}^{2}+\frac{\tilde{n}}{2}m\psi_{x}^{2}+\frac{\tilde{n}R}{2(\gamma-1)T}\zeta_{x}^{2},
\end{equation}
\begin{equation}
\label{def.h1x}
\CH_1^{\rm x}=\frac{\tilde{n}}{2}RTu\varphi_{x}^{2}+\tilde{n}RT\varphi_{x}\psi_{x}+\frac{\tilde{n}}{2}mu\psi_{x}^{2}+R\tilde{n}\zeta_{x}\psi_{x}+\frac{\tilde{n}R
u}{2(\gamma-1)T}\zeta_{x}^{2},
\end{equation}
and
\begin{eqnarray}
\CR_1^{\rm x}&=&\left[\frac{\tilde{n}R}{2}\zeta_{t}+ \frac{R T
u}{2}\tilde{n}_{x}-\frac{R \tilde{n}}{2}(T u)_{x}\right]\varphi_{x}^{2}
+\left[\frac{m
u}{2}\tilde{n}_{x}-\frac{m\tilde{n}}{2}u_{x}-m\tilde{n} \tilde{u}
_{x} \right]\psi_{x}^{2}\notag\\[2mm]
&&+\left[ \frac{R
u}{2(\gamma-1)T}\tilde{n}_{x}+\frac{R
\tilde{n}u}{2(\gamma-1)T^{2}}T_{x}-\frac{\tilde{n}R}{2(\gamma-1)T^{2}}\zeta_{t}-\frac{R
\tilde{n}}{2(\gamma-1)T}u_{x}-\frac{R
\tilde{n}}{T}\tilde{u}_{x}\right]\zeta_{x}^{2}\notag\\[2mm]
&&
 -R\tilde{n}T_{x}(\varphi_{t}+\psi
_{x})\varphi_{x}-R\tilde{n}_{x}\psi
 T_{x}\varphi_{x}+\frac{R\tilde{n}}{(\gamma-1)T^{2}}\zeta_{t}\zeta_{x}T_{x}
 +\frac{R\tilde{n}}{T^{2}}\tilde{u}_{x}\zeta\zeta_{x}T_{x}\notag\\[2mm]
&&+\frac{R\tilde{n}}{(\gamma-1)T^{2}}\tilde{T}_{x}\psi\zeta_{x}T_{x}-
\frac{R\tilde{n}}{(\gamma-1)T}\tilde{T}_{x}\psi_{x}\zeta_{x}
-R\tilde{n}\tilde{v}_{xx}(T\psi\varphi_{x}+\zeta\psi_{x})\notag\\[2mm]
&&-\tilde{n}\tilde{u}_{xx}(m\psi\psi_{x}+\frac{R}{T}\zeta\zeta_{x})
-\frac{R\tilde{n}}{(\gamma-1)T}\tilde{T}_{xx}\psi\zeta_{x}.
\label{def.r1x}
\end{eqnarray}

In what follows, we are going to multiply \eqref{2.1} by $e^{-\tilde{\phi}}$ and add the resulting equation together with \eqref{2.2}. To treating terms involving $\si$ in this process, we first notice that
 \begin{equation}
\label{2.3p1}
\tilde{n}\psi_{x}\sigma\cdot e^{-\tilde{\phi}}+(-\tilde{n}\psi_{x}\sigma_{xx})=\tilde{n}(e^{-\tilde{\phi}}\sigma-\sigma_{xx})\psi_{x}.
\end{equation}
Recall \eqref{1.17}. It follows from the Taylor expansion that
\begin{eqnarray}\label{2.3}
 \begin{aligned}
\sigma_{xx}=\tilde{n}\left(\varphi+\frac{1}{2}e^{\theta_{1}\varphi}\varphi^{2}\right)+e^{-\tilde{\phi}}\left(\sigma-\frac{1}{2}e^{-\theta_{2}\sigma}\sigma^{2}\right),\
\ \  \theta_{1},\theta_{2}\in(0,1).
\end{aligned}
\end{eqnarray}
Substituting the above form into the right-hand side of \eqref{2.3p1} and using the first component equation of \eqref{1.16}, one has
\begin{eqnarray}
\tilde{n}\psi_{x}\sigma\cdot e^{-\tilde{\phi}}+(-\tilde{n}\psi_{x}\sigma_{xx})&=&\left(\frac{1}{2}\tilde{n}^{2}\varphi^{2}\right)_{t}+
\left(\frac{1}{2}\tilde{n}^{2}u\varphi^{2}\right)_{x}-\frac{1}{2}\tilde{n}^{2}\tilde{u}_{x}\varphi^{2}-\tilde{n}u\tilde{n}_{x}\varphi^{2}
+\tilde{n}^{2}\tilde{v}_{x}\varphi\psi\notag\\[2mm]
&&-\frac{1}{2}\tilde{n}^{2}\psi_{x}\varphi^{2}-\frac{\tilde{n}}{2}\left(\tilde{n}e^{\theta_{1}\varphi}\varphi^{2}
-e^{-(\theta_{2}\sigma+\tilde{\phi})}\sigma^{2}\right)\psi_{x}.
\label{2.4}
\end{eqnarray}
Therefore, taking the procedure \eqref{2.1}$\times e^{-\tilde{\phi}}+$\eqref{2.4} gives that
\begin{eqnarray}
 &\dis\left(e^{-\tilde{\phi}}\CE_0
+\CE_1^{\rm x}+\frac{1}{2}\tilde{n}^{2}\varphi^{2}\right)_{t}
+\left(e^{-\tilde{\phi}}\CH_0+\CH_1^{\rm x}
+\frac{1}{2}\tilde{n}^{2}u\varphi^{2}\right)_{x}
+e^{-\tilde{\phi}}\tilde{\phi}_{x}\CH_0+e^{-\tilde{\phi}}\CD_0
\notag\\
&\dis+\left(-\tilde{n}u\tilde{n}_{x}-\frac{1}{2}\tilde{n}^{2}\tilde{u}_{x}
\right)\varphi^{2}+\tilde{n}^{2}\tilde{v}_{x}\varphi\psi =\CN_1,
\label{2.5}
\end{eqnarray}
where
\begin{equation}\label{2.6}
\CN_{1}=e^{-\tilde{\phi}}\CR_0+\CR_1^{\rm x}
+\frac{1}{2}\tilde{n}^{2}\psi_{x}\varphi^{2}+\frac{\tilde{n}}{2}\left(\tilde{n}e^{\theta_{1}\varphi}\varphi^{2}
-e^{-(\theta_{2}\sigma+\tilde{\phi})}\sigma^{2}\right)\psi_{x}.
\end{equation}

Recall \eqref{def.la0} and \eqref{def.5.5} for the definition of $\la_0$. Let $\la\in [4,\la_0)$ and then $\varepsilon\in (0,\la]$ be given.  We choose a space weight function $W_{\varepsilon,\beta}=(1+\beta x)^\varepsilon$ as in \eqref{def.W} for a suitable parameter $\beta>0$ depending on $\varepsilon$ to be determined later. Then, multiplying \eqref{2.5} by $ W_{\varepsilon,\beta}$ and
integrating the resulting equation over $\mathbb{R}_{+}$,
one deduces that
\begin{eqnarray}\label{2.7}
\begin{aligned}[b]
&\frac{d}{dt}\int_{\mathbb{R}_{+}} W_{\varepsilon,\beta}
\left[e^{-\tilde{\phi}}\CE_0
+\CE_1^{\rm x}+\frac{1}{2}\tilde{n}^{2}\varphi^{2}\right]dx\\[2mm]
&\quad+\underbrace{\int_{\mathbb{R}_{+}}\varepsilon\beta W_{\varepsilon-1,\beta}\CL_1dx}_{I_{1}}
 +\int_{\mathbb{R}_{+}}\varepsilon\beta
W_{\varepsilon-1,\beta}(-\CH_1^{\rm x})
dx
+\underbrace{\int_{\mathbb{R}_{+}} W_{\varepsilon,\beta}\CL_{2} dx}_{I_{2}}\\[2mm]
&\quad-[e^{-\tilde{\phi}}\CH_0
+\frac{1}{2}\tilde{n}^{2}u\varphi^{2}](t,0)-\CH_1^{\rm x}(t,0)=\int_{\mathbb{R}_{+}} W_{\varepsilon,\beta}\CN_{1} dx,
\end{aligned}
\end{eqnarray}
where
\begin{equation*}
\CL_1=
-e^{-\tilde{\phi}}\CH_0-\frac{1}{2}\tilde{n}^{2}u\varphi^{2}
\end{equation*}
and
\begin{eqnarray*}%\label{2.9}
\CL_{2}&=&e^{-\tilde{\phi}}\tilde{\phi}_{x}\CH_0+e^{-\tilde{\phi}}\CD_0+\left(-\tilde{n}u\tilde{n}_{x}-\frac{1}{2}\tilde{n}^{2}\tilde{u}_{x}
\right)\varphi^{2}+\tilde{n}^{2}\tilde{v}_{x}\varphi\psi.
\end{eqnarray*}

Now we estimate each term in \eqref{2.7} and we shall frequently use the results obtained in Section \ref{Append}. First, one can decompose $u$ and $T$ as $u=\psi+(\tilde{u}-u_{\infty})+u_{\infty}$ and  $T=\zeta+(\tilde{T}-T_{\infty})+T_{\infty}$, respectively. Recall \eqref{1.14d}
and Lemma \ref{main.result4.2}, as well as \eqref{def.h1x} for $\CH_1^{\rm x}$. Then one sees, under the condition \eqref{1.15b} and
 \eqref{b}-\eqref{d}, that
\begin{eqnarray}\label{2.10a}
 \begin{aligned}[b]
&\frac{-\CH_1^{\rm x}}{\tilde{n}}=-\frac{1}{2}RTu\varphi_{x}^{2}-RT\varphi_{x}\psi_{x}-\frac{1}{2}mu\psi_{x}^{2}-R\zeta_{x}\psi_{x}-\frac{R
u}{2(\gamma-1)T}\zeta_{x}^{2}\\[2mm]
&\geq\frac{1}{2}RT_{\infty}(-u_{\infty})\varphi_{x}^{2}-RT_{\infty}\varphi_{x}\psi_{x}+\frac{1}{2}m(-u_{\infty})\psi_{x}^{2}
-R\zeta_{x}\psi_{x}+\frac{R
(-u_{\infty})}{2(\gamma-1)T_{\infty}}\zeta_{x}^{2}\\[2mm]
&\qquad-C(\mathcal
{N}_{\lambda,\beta}(M)+\phi_{b})(\varphi_{x}^{2}+\psi_{x}^{2}+\zeta_{x}^{2})\\[2mm]
&\geq (c-C\delta)(\varphi_{x}^{2}+\psi_{x}^{2}+\zeta_{x}^{2}).
\end{aligned}
\end{eqnarray}
Therefore, it follows that the third term on the left-hand side of \eqref{2.7} can be estimated as
\begin{equation}\label{2.11a}
\int_{\mathbb{R}_{+}}\varepsilon\beta
W_{\varepsilon-1,\beta} (-\CH_1^{\rm x})dx
\geq
c\beta\|(\varphi_{x},\psi_{x},\zeta_{x})\|_{\varepsilon-1,\beta}^{2}.
\end{equation}
Here and in the sequel we have omitted the explicit dependence of $c>0$ on $\varepsilon$ for brevity and instead we would only emphasize the dependence of the constant coefficient on $\beta$.

In the same way as for treating \eqref{2.10a}, with the help of $u_{\infty}<0$ as well as the smallness of $\delta>0$, for the boundary terms on the left-hand side of \eqref{2.7} and under the condition \eqref{1.15b}, one has
\begin{equation}\label{2.11}
-\CH_1^{\rm x}(t,0)\geq (c-C\delta)[\varphi_{x}^{2}+\psi_{x}^{2}+\zeta_{x}^{2}]_{x=0}
\end{equation}
and
\begin{eqnarray}\label{2.11b}
 \begin{aligned}[b]
-\left[e^{-\tilde{\phi}}\CH_0
+\frac{1}{2}\tilde{n}^{2}u\varphi^{2}\right](t,0)
\geq (c-C\delta)[\varphi^{2}+\psi^{2}+\zeta^{2}]_{x=0}-(\frac{|u_{\infty}|}{2}+C\delta)[\sigma^{2}]_{x=0}.
\end{aligned}
\end{eqnarray}

It remains to estimate two terms $I_{1}$ and
$I_{2}$ on the left-hand side of \eqref{2.7}. The
key is to make full use of properties of the stationary solution in \eqref{1.14d}.  Through careful computations, one can capture the full energy dissipation of all the zero-order components with the positive coefficient. In fact, using \eqref{1.14d} and Lemma \ref{main.result4.2} together with the identity that $\tilde{n}(x)\tilde{u}(x)\equiv u_{\infty}$ and recalling $G=G(x)=\Ga x +\phi_b^{-1/2}$ as in \eqref{def.Gx} with the constant $\Gamma$ defined in \eqref{def.conGa}, one has
\begin{align}
 I_{1} &\geq\int_{\mathbb{R}_{+}}\varepsilon\beta
W_{\varepsilon-1,\beta}\Big\{
\frac{1-G^{-2}}{2}(RT_{\infty}+1)|u_{\infty}|\varphi^{2}-(1-2G^{-2})RT_{\infty}\varphi\psi\notag\\[2mm]
&\qquad\qquad\qquad\qquad+\frac{1-G^{-2}}{2}m|u_{\infty}|\psi^{2}
+(1-G^{-2})\frac{R
|u_{\infty}|}{2(\gamma-1)T_{\infty}}\zeta^{2}\notag\\[2mm]
&\qquad\qquad\qquad\qquad-(1-2G^{-2})R\zeta\psi+(1-2G^{-2})\sigma\psi\Big\}dx\notag\\[2mm]
&\quad
-C\mathcal
{N}_{\lambda,\beta}(M)\|(\varphi,\psi,\zeta,\sigma)\|^{2}_{\varepsilon-3,\beta}\notag\\[2mm]
&\quad-C\phi_{b}\int_{\mathbb{R}_{+}}\beta
W_{\varepsilon-1,\beta}G^{-2}(\varphi^{2}+\psi^{2}+\zeta^{2}+\sigma^{2})dx,\label{2.12}
\end{align}
and
\begin{align}
 I_{2}\geq&\int_{\mathbb{R}_{+}}
W_{\varepsilon,\beta}G^{-3}\Gamma|u_{\infty}|\left\{(\gamma
RT_{\infty}+1)\varphi^{2}+\frac{2(1-\gamma
RT_{\infty})}{|u_{\infty}|}\varphi\psi+3m\psi^{2}\right.\notag\\[2mm]
&\qquad\qquad\qquad\qquad\qquad\qquad\left.+\frac{4}{|u_{\infty}|}\sigma\psi+\frac{\gamma
R}{(\gamma-1)T_{\infty}}\zeta^{2}\right\}dx\notag\\[2mm] &
-C(\mathcal {N}_{\lambda,\beta}(M)+\phi_{b})\int_{\mathbb{R}_{+}}
W_{\varepsilon,\beta}G^{-3}(\varphi^{2}+\psi^{2}+\zeta^{2}+\sigma^{2})dx.\label{2.13}
\end{align}
Adding \eqref{2.12} to  \eqref{2.13} together and further using the Cauchy-Schwarz inequality
$$
\sigma\psi\geq-\left(\frac{|u_{\infty}|}{2}\sigma^{2}+\frac{1}{2|u_{\infty}|}\psi^{2}\right)
$$
and the condition \eqref{1.15b}, one has
\begin{align}
I_{1}+I_{2} &\geq I_{1,2}-C\mathcal
{N}_{\lambda,\beta}(M)\|(\varphi,\psi,\zeta,\sigma)\|^{2}_{\varepsilon-3,\beta}-C\phi_{b}\int_{\mathbb{R}_{+}}\beta
W_{\varepsilon-1,\beta}G^{-2}(\varphi^{2}+\psi^{2}+\zeta^{2}+\sigma^{2})dx\notag\\
&\quad-C(\mathcal
{N}_{\lambda,\beta}(M)+\phi_{b})\int_{\mathbb{R}_{+}}
W_{\varepsilon,\beta}G^{-3}(\varphi^{2}+\psi^{2}+\zeta^{2}+\sigma^{2})dx,\label{2.14}
\end{align}
where we have defined
\begin{eqnarray}\label{2.15}
 \begin{aligned}[b]
I_{1,2}=&\int_{\mathbb{R}_{+}}\left\{\frac{\varepsilon\beta}{2}W_{\varepsilon-1,\beta}(1-G^{-2})(RT_{\infty}+1)|u_{\infty}|+\Gamma|u_{\infty}|(\gamma
RT_{\infty}+1)W_{\varepsilon,\beta}G^{-3}\right\}\varphi^{2}dx\\[2mm] &
+\int_{\mathbb{R}_{+}}\left\{-RT_{\infty} \varepsilon\beta
W_{\varepsilon-1,\beta}(1-2G^{-2})+2\Gamma(1-\gamma
RT_{\infty})W_{\varepsilon,\beta}G^{-3} \right\}\varphi \psi dx\\[2mm] &
+
\frac{1}{|u_{\infty}|}\int_{\mathbb{R}_{+}}\left\{W_{\varepsilon-1,\beta}\frac{\varepsilon\beta}{2}[\gamma
RT_{\infty}+(1-\gamma RT_{\infty})G^{-2}]+\Gamma(3\gamma RT_{\infty}+1)W_{\varepsilon,\beta}G^{-3}\right\}\psi^{2}dx\\[2mm] &
-\frac{|u_{\infty}|}{2}\int_{\mathbb{R}_{+}}\left\{\varepsilon\beta
W_{\varepsilon-1,\beta}(1-2G^{-2})+4\Gamma W_{\varepsilon,\beta}G^{-3} \right\}\sigma^{2} dx\\[2mm] &
+\int_{\mathbb{R}_{+}}\left\{\frac{\varepsilon\beta}{2}W_{\varepsilon-1,\beta}\frac{R|u_{\infty}|}{(\gamma-1)T_{\infty}}(1-G(x)^{-2})
+\frac{\Gamma\gamma R|u_{\infty}|}{(\gamma-1)T_{\infty}}W_{\varepsilon,\beta}G^{-3}\right\}\zeta^{2}dx\\[2mm] &
-\int_{\mathbb{R}_{+}}\varepsilon\beta W_{\varepsilon-1,\beta} R(1-2G^{-2})\zeta
\psi dx.
\end{aligned}
\end{eqnarray}

Now we {\it claim} a key estimate on the coercivity of $I_{1,2}$ as follows:
\begin{eqnarray}\label{2.16}
 \begin{aligned}[b]
I_{1,2}\geq c \beta^{3}
\|(\varphi,\psi,\zeta)\|_{\varepsilon-3,\beta}^{2}+c\beta\|\sigma_{x}\|_{\varepsilon-1,\beta}^{2}-C\beta[\sigma^{2}+\sigma_{x}^{2}]_{x=0},
\end{aligned}
\end{eqnarray}
where as mentioned before, the constant $c>0$ may depend on $\varepsilon$ but not on $\beta$. Indeed, multiplying \eqref{2.3} by $-\varepsilon\beta \sigma
W_{\varepsilon-1,\beta} $ and integrating the resulting equation over $\mathbb{R}_{+}$ with the help of \eqref{1.14d} and the Cauchy-Schwarz
inequality, it follows that
\begin{align*}
&\int_{\mathbb{R}_{+}} \varepsilon\beta
W_{\varepsilon-1,\beta}\left\{\sigma_{x}^{2}+\frac{1}{2}(1-2G^{-2})\sigma^{2}
\right\}dx\notag \\[2mm]
&\leq
\int_{\mathbb{R}_{+}}\frac{\varepsilon\beta}{2}W_{\varepsilon-1,\beta}(1-2G^{-2})\varphi^{2}dx+\int_{\mathbb{R}_{+}}\frac{1}{2}\varepsilon(\varepsilon-1)(\varepsilon-2)
\beta^{3}W_{\varepsilon-3,\beta}\varphi^{2}dx\notag\\[2mm]
&\quad+C(\beta^{2}+\phi_{b}+\mathcal{N}_{\lambda,\beta}(M)\beta^{-2})\beta^{3}\|\varphi\|_{\varepsilon-3,\beta}^{2}+C\beta[\sigma^{2}+\sigma_{x}^{2}]_{x=0}.
\end{align*}
Applying the above estimate into the fourth term on the right-hand side of \eqref{2.15}, we are able to obtain
\begin{align}
I_{1,2}&\geq  \int_{\mathbb{R}_{+}}\beta
W_{\varepsilon-1,\beta}Q(x)dx+|u_{\infty}|\varepsilon\beta\|\sigma_{x}\|_{\varepsilon-1,\beta}^{2}\notag\\[2mm]
&\quad-C(\beta^{2}+\phi_{b}+\mathcal{N}_{\lambda,\beta}(M)\beta^{-2})\beta^{3}\|\varphi\|_{\varepsilon-3,\beta}^{2}-C\beta[\sigma^{2}+\sigma_{x}^{2}]_{x=0}, \label{2.18}
\end{align}
where $Q(x)$  is a quadratic form of $\varphi,$ $\psi$ and  $\zeta$ defined by
\begin{eqnarray}\label{2.1gh}
Q(x)=|u_{\infty}|q_{1}(x)\varphi^{2}+q_{2}(x)\varphi\psi+\frac{1}{|u_{\infty}|}q_{3}(x)\psi^{2}
+|u_{\infty}|q_{4}(x)\zeta^{2}+q_{5}(x)\zeta\psi,
\end{eqnarray}
with
\begin{eqnarray*}
q_{1}(x)&=&\frac{\varepsilon}{2}RT_{\infty}+B(x)^{-2}\Gamma^{-2}\Big\{\frac{(1-RT_{\infty})\varepsilon}{2}S(x)^{2}
+(\gamma
RT_{\infty}-1)S(x)^{3}\\[2mm]
&&\qquad\qquad\qquad\qquad\qquad-\frac{\Gamma^{2}}{2}\varepsilon(\varepsilon-1)(\varepsilon-2)
\Big\},\\[2mm]
q_{2}(x)&=&-RT_{\infty}\varepsilon+B(x)^{-2}\Gamma^{-2}\left\{2\varepsilon
RT_{\infty} S(x)^{2} +2(1-\gamma RT_{\infty})S(x)^{3} \right\},\\[2mm]
q_{3}(x)&=&\frac{\varepsilon}{2}\gamma
RT_{\infty}+B(x)^{-2}\Gamma^{-2}\left\{\frac{(1-\gamma
RT_{\infty})\varepsilon}{2}S(x)^{2} +(3\gamma RT_{\infty}+1)S(x)^{3}
\right\},\\[2mm]
q_{4}(x)&=&\frac{\varepsilon R}{2(\gamma-1)T_{\infty}}
+B(x)^{-2}\Gamma^{-2}\left\{-\frac{\varepsilon
R}{2(\gamma-1)T_{\infty}}S(x)^{2} +\frac{\gamma
R}{(\gamma-1)T_{\infty}}S(x)^{3} \right\},
\end{eqnarray*}
and
\begin{equation*}
%\label{ }
q_{5}(x)=-\varepsilon R+2\varepsilon R
B(x)^{-2}\Gamma^{-2}S(x)^{2}.
\end{equation*}
Here, functions $B(x)$ and $S(x)$ are given by
\begin{eqnarray*}
B(x)=x+\beta^{-1},
\end{eqnarray*}
and
\begin{eqnarray*}
S(x)=(x+\beta^{-1})/(x+\Gamma^{-1}\phi_{b}^{-\frac{1}{2}}),
\end{eqnarray*}
respectively.
We claim that
\begin{eqnarray}\label{2.19}
 \begin{aligned}[b]
 q_{1}(x)>0,\ \ \  q_{3}(x)>0,\ \ \
 q_{4}(x)>0,
\end{aligned}
\end{eqnarray}
\begin{eqnarray}\label{2.19a}
 \begin{aligned}[b]
q_{2}(x)^{2}-4q_{1}(x)q_{3}(x)<0, \ \ \
q_{5}(x)^{2}-4q_{3}(x)q_{4}(x)<0,
\end{aligned}
\end{eqnarray}
and
\begin{eqnarray}\label{2.20}
 \begin{aligned}[b]
q_{1}(x)q_{5}(x)^{2}+q_{4}(x)q_{2}(x)^{2}-4q_{1}(x)q_{3}(x)q_{4}(x)
\leq -c B(x)^{-2}.
\end{aligned}
\end{eqnarray}
In fact, we observe from \eqref{b}-\eqref{d} that
$$
S(x)\geq 1,\quad B(x)^{-2}\leq \beta^{2}\leq C
\phi_{b}\leq C \delta.
$$
Using the above observation and letting $\delta>0$ be small enough, it is straightforward to prove \eqref{2.19} and \eqref{2.19a}; the details are omitted for brevity.
As for \eqref{2.20}, recalling \eqref{def.conGa},  one has
\begin{align}%[b]
&q_{1}(x)q_{5}(x)^{2}+q_{4}(x)q_{2}(x)^{2}-4q_{1}(x)q_{3}(x)q_{4}(x)\notag \\[2mm]
&\leq
\frac{\varepsilon^{2}R^{2}}{2(\gamma-1)}B(x)^{-2}\Big\{
\varepsilon(\varepsilon-1)(\varepsilon-2)-2\varepsilon(1+\gamma R
T_{\infty})\Gamma^{-2}S(x)^{2}\notag \\[2mm]
&\qquad\qquad\qquad\qquad\quad-2[(\gamma^{2}+\gamma)R
T_{\infty}+2]\Gamma^{-2}S(x)^{3} \Big\}+C\beta^{2}B(x)^{-2}
\notag \\[2mm]
&\leq \frac{\varepsilon^{2}R^{2}}{2(\gamma-1)}B(x)^{-2}\left\{
\varepsilon(\varepsilon-1)(\varepsilon-2)-12(\frac{2}{\gamma+1}\varepsilon+2)+C\beta^{2} \right\}.\label{2.20p1}
\end{align}
Recall \eqref{def.la0} for the definition of $\lambda_{0}$. Since $\la\in [4,\la_0)$ and $\varepsilon\in (0,\la]$, \eqref{2.20} follows from \eqref{2.20p1} by letting $\beta^2$ be small enough. Thus, combining \eqref{2.20} together with \eqref{2.19}  and \eqref{2.19a}, it holds that
\begin{eqnarray*}
 \begin{aligned}[b]
Q(x)\geq c B(x)^{-2}(\varphi^{2}+\psi^{2}+\zeta^{2}),
\end{aligned}
\end{eqnarray*}
 which then implies that
\begin{equation}\label{2.22}
\int_{\mathbb{R}_{+}}\beta W_{\varepsilon-1,\beta}Q(x)dx\geq c\beta\int_{\mathbb{R}_{+}}
W_{\varepsilon-1,\beta} B(x)^{-2}(\varphi^{2}+\psi^{2}+\zeta^{2})dx =c
\beta^{3}\|(\varphi,\psi,\zeta)\|_{\varepsilon-3,\beta}^{2}.
\end{equation}
Therefore, the key estimate \eqref{2.16} follows by substituting \eqref{2.22} into \eqref{2.18} and letting  $\beta^{2}\leq
C\phi_{b}\leq C\delta$ and $\mathcal
{N}_{\lambda,\beta}(M)/\beta^{3}\leq \delta$ for $\delta>0$ small enough.

For the last three terms on the right-hand side of \eqref{2.14}, it is direct to obtain
\begin{align}
&C\mathcal
{N}_{\lambda,\beta}(M)\|(\varphi,\psi,\zeta,\sigma)\|^{2}_{\varepsilon-3,\beta}+C\phi_{b}\int_{\mathbb{R}_{+}}\beta
W_{\varepsilon-1,\beta}G(x)^{-2}(\varphi^{2}+\psi^{2}+\zeta^{2}+\sigma^{2})dx\notag\\
&\quad+C(\mathcal{N}_{\lambda,\beta}(M)+\phi_{b})\int_{\mathbb{R}_{+}}
W_{\varepsilon,\beta}G(x)^{-3}(\varphi^{2}+\psi^{2}+\zeta^{2}+\sigma^{2})dx\notag \\
&\leq C\delta \beta^{3}\|(\varphi,\psi,\zeta)\|_{\varepsilon-3,\beta}^{2}+C\delta \beta^{3}[\sigma_{x}^{2}]_{x=0}\label{2.23}
\end{align}
with the help of \eqref{b}, \eqref{d},  \eqref{def.Gx} and the elliptic estimates in Lemma \ref{main.result4}.

 By substituting \eqref{2.23} and \eqref{2.16} into
 \eqref{2.14}, we have
\begin{eqnarray}\label{2.24}
 \begin{aligned}[b]
I_{1}+I_{2}\geq&
(c-C\delta)(\beta^{3}\|(\varphi,\psi,\zeta)\|_{\varepsilon-3,\beta}^{2}+\beta\|\sigma_{x}\|_{\varepsilon-1,\beta}^{2})-C\beta[\sigma^{2}+\sigma_{x}^{2}]_{x=0}
%-C\delta \beta^{3}[\sigma_{x}^{2}]_{x=0}.
\end{aligned}
\end{eqnarray}

At the end, we estimate the only term on the right-hand side of
\eqref{2.7}. In fact, recalling \eqref{2.6} as well as \eqref{def.r0} and \eqref{def.r1x}, it holds that
\begin{eqnarray}\label{2.25}
 \begin{aligned}[b]
\int_{\mathbb{R}_{+}} W_{\varepsilon,\beta}\CN_{1}dx\leq  C\delta
\left\{\beta^{3}\|(\varphi,\psi,\zeta)\|_{\varepsilon-3,\beta}^{2}+\beta\|(\varphi_{x},\psi_{x},\zeta_{x})\|_{\varepsilon-1,\beta}^{2}\right\}+C\delta \beta^{3}[\sigma_{x}^{2}]_{x=0},
\end{aligned}
\end{eqnarray}
where we have used \eqref{1.14d}, \eqref{1.16}, \eqref{b}-\eqref{d}, $\lambda\geq 4$ and the
Cauchy-Schwarz inequality and the elliptic estimate in Lemma \ref{main.result4}.

 Substituting \eqref{2.11a}, \eqref{2.11}, \eqref{2.11b}, \eqref{2.24} and \eqref{2.25}
into \eqref{2.7}, we have
\begin{align}
&\frac{d}{dt}\int_{\mathbb{R}_{+}} W_{\varepsilon,\beta}(e^{-\tilde{\phi}}\CE_0
+\CE_1^{\rm x}+\frac{1}{2}\tilde{n}^{2}\varphi^{2})dx+c\beta^{3}\|(\varphi,\psi,\zeta)\|_{\varepsilon-3,\beta}^{2}\notag\\[2mm]
&\quad +c\beta\|(\varphi_{x},\psi_{x},\zeta_{x},\sigma_{x})\|_{\varepsilon-1,\beta}^{2}
\leq C[\sigma^{2}+\sigma_{x}^{2}]_{x=0},\label{2.26}
\end{align}
provided that $\delta>0$ is sufficiently small and $\CE_0$ and $\CE_1^{\rm x}$ are defined in \eqref{def.e0} and \eqref{def.e1x}, respectively.
Furthermore, multiplying \eqref{2.26} by $(1+\beta \tau)^{\xi}$ and integrating the resulting inequality
over $(0,t)$ give the desired estimate \eqref{2.10}.  This hence completes the proof of Lemma \ref{main.result2.9}.

\end{proof}

\begin{lemma}\label{main.resultloik0}
Under the same conditions as in Proposition \ref{ste.pro1.1}, there exist
positive constants $C$ and $\delta$ independent of $M$ such that if  conditions  \eqref{b}-\eqref{d}  and  $\theta\Gamma \phi_{b}^{1/2}\leq\beta_{1}\leq \beta\leq\Gamma \phi_{b}^{1/2}$ are satisfied, it holds for
any $t\in[0,M]$ and any $\xi\geq 0$ that
\begin{align}
&(1+\beta t)^{\xi}\|(\varphi,\psi,\zeta,\sigma,\sigma_{x})(t)\|_{\varepsilon,\beta_{1}}^{2}+\int_{0}^{t} (1+\beta
\tau)^{\xi}[\varphi^{2}+\psi^{2}+\zeta^{2}+\sigma^{2}+\sigma_{x}^{2}]_{x=0}d\tau\notag\\
\leq& C\|(\varphi_{0},\psi_{0},\zeta_{0},\sigma_{0},\sigma_{x0})\|_{\varepsilon,\beta}^{2}
+C\xi\beta\int_{0}^{t} (1+\beta
\tau)^{\xi-1}\|(\varphi,\psi,\zeta,\sigma,\sigma_{x})(\tau)\|_{\varepsilon,\beta}^{2}d\tau\notag\\&+C\int_{0}^{t} (1+\beta
\tau)^{\xi}\left[(\beta_{1}^{3}+\delta\beta^{3})\|(\varphi,\psi,\zeta)(\tau)\|_{\varepsilon-3,\beta}^{2}
+(\beta_{1}+\delta\beta^{3})\|(\varphi_{x},\psi_{x},\zeta_{x},\sigma_{x})(\tau)\|_{\varepsilon-1,\beta}^{2}\right]d\tau.\label{2.uykjui}
\end{align}
\end{lemma}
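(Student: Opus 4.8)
The plan is to derive a new pointwise identity by multiplying the equations of motion by combinations that produce a \emph{boundary} dissipative term at $x=0$, at the cost of an integral term on the left that is controlled by the dissipation already obtained in Lemma \ref{main.result2.9}; this is the standard device going back to \cite{NOS} and used in \cite{boundaawa}. Concretely, I would start from \eqref{2.1} and \eqref{2.2}, but instead of only integrating against the symmetrizer I would also multiply the equations (and the Poisson equation \eqref{1.17} in the form \eqref{2.3}) by factors carrying the spatial weight $W_{\varepsilon,\beta_1}=(1+\beta_1 x)^{\varepsilon}$ with the \emph{smaller} parameter $\beta_1\ll\beta$. The point of using $\beta_1$ rather than $\beta$ is exactly the one highlighted in the third bullet of the introduction: \eqref{2.10} produces an interior dissipation but a \emph{bad} boundary term, whereas the present estimate must produce a \emph{good} boundary dissipation $\int_0^t(1+\beta\tau)^\xi[\varphi^2+\psi^2+\zeta^2+\sigma^2+\sigma_x^2]_{x=0}\,d\tau$ while only spending interior quantities that are either of order $\beta_1$ (hence absorbable after one multiplies \eqref{2.uykjui} by a suitable small power and adds it to a large multiple of \eqref{2.10}) or of order $\delta\beta^3$ (hence absorbable by smallness).

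The key steps, in order, would be: (i) form the weighted energy identity for $(\CE_0,\CE_1^{\rm x})$ against $W_{\varepsilon,\beta_1}$ exactly as in the derivation of \eqref{2.7}, so that the boundary terms $-[e^{-\tilde\phi}\CH_0+\tfrac12\tilde n^2 u\varphi^2](t,0)-\CH_1^{\rm x}(t,0)$ reappear; by \eqref{2.11}, \eqref{2.11b} these are bounded below by $(c-C\delta)[\varphi^2+\psi^2+\zeta^2+\varphi_x^2+\psi_x^2+\zeta_x^2]_{x=0}$ minus a harmless $[\sigma^2]_{x=0}$ contribution. (ii) Separately produce the missing boundary terms $[\sigma^2+\sigma_x^2]_{x=0}$: multiply \eqref{2.3} (equivalently \eqref{1.17}) by $\sigma\,W_{\varepsilon,\beta_1}$ and integrate, which gives $\int W_{\varepsilon,\beta_1}(\sigma_x^2+e^{-\tilde\phi}\sigma^2)\,dx$ on the left plus a boundary term $[\sigma\sigma_x]_{x=0}$ and interior terms in $\varphi^2$; crucially the coefficient of the interior $\sigma_x^2,\sigma^2$ here does \emph{not} carry a $\beta$ prefactor, so it is genuinely a zeroth-order gain, and the boundary flux $[\sigma\sigma_x]_{x=0}$ can then be absorbed using the boundary condition \eqref{1.19} together with the $[\psi^2]_{x=0}$ already available. (iii) Use \eqref{1.19} to rewrite $\sigma_{xt}(t,0)$ and thereby convert the time-boundary term generated in step (ii) into controllable quantities at $x=0$; here one Taylor-expands $e^{-\sigma}-1$ and $e^v-e^{\tilde v}$ and uses the smallness $\mathcal N_{\lambda,\beta}(M)\le\delta\beta^3$. (iv) Collect: the interior remainders split into a part with coefficient $\beta_1$ (from $\varepsilon\beta_1 W_{\varepsilon-1,\beta_1}$-type terms, using $W_{\varepsilon,\beta_1}\le W_{\varepsilon,\beta}$ and $\beta_1\le\beta$), a part with coefficient $\delta\beta^3$ (from the nonlinear terms $\CN_1$-type and from the Poisson substitution, exactly as in \eqref{2.23}, \eqref{2.25}), and the $\xi$-error term $C\xi\beta\int(1+\beta\tau)^{\xi-1}\|\cdots\|_{\varepsilon,\beta}^2$ coming from differentiating the time weight. (v) Finally multiply by $(1+\beta\tau)^\xi$, integrate in $\tau$ over $(0,t)$, and use that the norm $\|\cdot\|_{\varepsilon,\beta_1}$ and $\|\cdot\|_{\varepsilon,\beta}$ are comparable up to constants depending only on $\theta$ (since $\beta_1/\beta\in[\theta,1]$) to put the statement into the form \eqref{2.uykjui}.

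The main obstacle I anticipate is step (ii)--(iii): controlling the boundary flux terms that the Poisson-equation multiplier produces, namely $[\sigma\sigma_x]_{x=0}$ and, after inserting \eqref{1.19}, a term of the shape $[\sigma\cdot(-e^v\psi-\tilde u(e^v-e^{\tilde v})+u_\infty(e^{-\sigma}-1))]_{x=0}$ together with its time-integrated partner. One must show these are dominated by the \emph{positive} boundary quadratic form $[\varphi^2+\psi^2+\zeta^2+\sigma^2+\sigma_x^2]_{x=0}$ obtained in step (i), which requires a Cauchy--Schwarz split with carefully chosen weights so that no coefficient exceeds $c-C\delta$; the presence of the linear-in-$\sigma$ term $u_\infty(e^{-\sigma}-1)\approx -u_\infty\sigma$ with $u_\infty<0$ is what makes this work (it contributes with a favorable sign, mirroring the computation behind \eqref{2.11b}), but one has to track the constants to see that $\delta$ small suffices. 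A secondary technical point is that, unlike in Lemma \ref{main.result2.9}, here one does \emph{not} want to extract the zeroth-order interior dissipation $\beta^3\|(\varphi,\psi,\zeta)\|_{\varepsilon-3,\beta}^2$ from $I_1+I_2$ — doing so would reintroduce the complicated quadratic form $Q(x)$; instead one simply bounds all interior remainders crudely by the right-hand side of \eqref{2.uykjui}, which is legitimate precisely because \eqref{2.uykjui} is meant to be \emph{added} to a large multiple of \eqref{2.10} in the proof of Proposition \ref{ste.pro1.1}.
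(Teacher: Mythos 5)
Your plan does not actually produce the boundary dissipation $\int_{0}^{t}(1+\beta\tau)^{\xi}[\sigma^{2}+\sigma_{x}^{2}]_{x=0}\,d\tau$ that is the whole point of the lemma, and this is a genuine gap rather than a technicality. In your step (i) the $\sigma$-contribution of the boundary term is the one from \eqref{2.11b}, namely $-(\tfrac{|u_\infty|}{2}+C\delta)[\sigma^{2}]_{x=0}$: it enters with the \emph{wrong} sign, so it is not "harmless" — it is exactly the bad term that Lemma \ref{main.result2.9} dumps on its right-hand side. Your step (ii) then tests the Poisson equation against $\sigma W_{\varepsilon,\beta_1}$; integrating by parts this yields \emph{interior} dissipation $\int W_{\varepsilon,\beta_1}(\sigma_x^{2}+e^{-\tilde\phi}\sigma^{2})\,dx$ together with the sign-indefinite boundary flux $[\sigma\sigma_x]_{x=0}$, and your claim that this flux "can be absorbed using the boundary condition \eqref{1.19}" fails: \eqref{1.19} prescribes $\sigma_{xt}(t,0)$, not $\sigma_x(t,0)$, so it cannot be inserted into $[\sigma\sigma_x]_{x=0}$. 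At this stage you have no positive $[\sigma^{2}]_{x=0}$ or $[\sigma_x^{2}]_{x=0}$ anywhere to absorb anything into.

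The mechanism the paper uses is different and is the missing idea. One stays at zeroth order (no $\CE_1^{\rm x}$), rewrites the interaction term $\psi_x\sigma$ in \eqref{2.1new} via the continuity equation as $-\sigma\varphi_t-\tilde u\sigma\varphi_x-(\sigma\psi\varphi)_x+\cdots$, and then (a) converts $-\sigma\varphi_t$ by testing the \emph{time-differentiated} Poisson equation \eqref{2.3new} against $e^{-v}\sigma$, which produces the time-energy $\tfrac12 e^{-\phi-v}\sigma^{2}+\tfrac12 e^{-v}\sigma_x^{2}$ \emph{and} the spatial flux $-(e^{-v}\sigma\sigma_{xt})_x$; the boundary value $e^{-v}\sigma\sigma_{xt}|_{x=0}$ is where \eqref{1.19} legitimately enters and yields the favorable term $u_\infty e^{-v}(e^{-\sigma}-1)\sigma\approx|u_\infty|\sigma^{2}$ at $x=0$; and (b) converts $-\tilde u\sigma\varphi_x$ by testing the Poisson equation \eqref{2.4new} against $e^{-\tilde v}\tilde u\sigma_x$ (not against $\sigma$), whose flux contributes $+\tfrac12 e^{-\tilde v}|\tilde u|\sigma_x^{2}$ and $-\tfrac12|\tilde u|e^{-\tilde\phi-\tilde v}\sigma^{2}$ at the boundary. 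Summing, the $\sigma$-part of the boundary form is $|u_\infty|\sigma^{2}-\tfrac{|u_\infty|}{2}\sigma^{2}+\tfrac{|u_\infty|}{2}\sigma_x^{2}>0$, which is what makes \eqref{2.uykjui} possible. You do identify the favorable sign of $u_\infty(e^{-\sigma}-1)\sigma$ in your "obstacle" paragraph, but without the multipliers $e^{-v}\sigma$ on $\partial_t$\eqref{1.17} and $e^{-\tilde v}\tilde u\sigma_x$ on \eqref{1.17} there is no $\sigma\sigma_{xt}$ flux to insert \eqref{1.19} into and no $\tilde u\sigma_x^{2}$ flux to give $[\sigma_x^{2}]_{x=0}$. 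A minor further point: in the proof of Proposition \ref{ste.pro1.1} the combination is $\theta_0\times$\eqref{2.10}$+$\eqref{2.uykjui} with $\theta_0$ \emph{small}; adding \eqref{2.uykjui} to a \emph{large} multiple of \eqref{2.10}, as you propose, would make the bad boundary term from \eqref{2.10} overwhelm the unit-coefficient boundary dissipation of \eqref{2.uykjui}.
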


\begin{proof}
To make the fourth term of \eqref{2.1} easier to calculate and using $\tilde{v}_{x}=\frac{\tilde{n}_{x}}{\tilde{n}}$, we rewrite \eqref{2.1} as follows:
\begin{equation}
\label{2.1new}
\left(\frac{\CE_0}{\tilde{n}}\right)_t+\left(\frac{\CH_0}{\tilde{n}}\right)_x+\psi_{x}\sigma=-\tilde{v}_{x}\frac{\CH_0}{\tilde{n}}
-\frac{\CD_0}{\tilde{n}}+\frac{\CR_0}{\tilde{n}}.
\end{equation}
 By $\eqref{1.16}_{1}$, we have
\begin{equation}
\label{2.2new}
\psi_{x}\sigma=-\sigma\varphi_{t}-\tilde{u}\sigma\varphi_{x}-(\sigma\psi\varphi)_{x}+\sigma\varphi\psi_{x}+\psi\varphi\sigma_{x}
-\tilde{v}_{x}\psi\sigma.
\end{equation}
Differentiating \eqref{1.17} in $t$, we have
\begin{equation}
\label{2.3new}
\sigma_{xxt}=e^{v}\varphi_{t}+e^{-\phi}\sigma_{t}.
\end{equation}
Multiplying \eqref{2.3new} by $e^{-v}\sigma$, we have
\begin{eqnarray}\label{2.3newh}
 \begin{aligned}
-\sigma\varphi_{t}=&\left[\frac{1}{2}e^{-\phi}e^{-v}\sigma^{2}+\frac{1}{2}e^{-v}\sigma_{x}^{2}\right]_{t}-(e^{-v}\sigma\sigma_{xt})_{x}
\\&+\frac{1}{2}e^{-\phi}e^{-v}(\varphi_{t}+\sigma_{t})\sigma^{2}+\frac{1}{2}e^{-v}\varphi_{t}\sigma_{x}^{2}-e^{-v}\varphi_{x}\sigma\sigma_{xt}
-e^{-v}\tilde{v}_{x}\sigma\sigma_{xt}.
\end{aligned}
\end{eqnarray}
To compute $-\tilde{u}\sigma\varphi_{x}$ in \eqref{2.2new}, we rewrite \eqref{1.17} into
\begin{equation}
\label{2.4new}
\sigma_{xx}=e^{\tilde{v}}(e^{\varphi}-1-\varphi)-e^{-\tilde{\phi}}(e^{-\sigma}-1+\sigma)+\varphi e^{\tilde{v}}+e^{-\tilde{\phi}}\sigma.
\end{equation}
Multiplying \eqref{2.4new} by $e^{-\tilde{v}}\tilde{u}\sigma_{x}$, we have
\begin{eqnarray}\label{2.5new}
 \begin{aligned}[b]
-\tilde{u}\sigma\varphi_{x}=&\left[\frac{1}{2}e^{-\tilde{v}}\tilde{u}\sigma_{x}^{2}-\tilde{u}\varphi\sigma-\frac{1}{2}\tilde{u}e^{-\tilde{\phi}}e^{-\tilde{v}}\sigma^{2}\right]_{x}
-\frac{1}{2}(e^{-\tilde{v}}\tilde{u})_{x}\sigma_{x}^{2}+\tilde{u}_{x}\varphi\sigma\\&-\tilde{u}(e^{\varphi}-1-\varphi)\sigma_{x}+e^{-\tilde{\phi}}e^{-\tilde{v}}\tilde{u}(e^{-\sigma}-1+\sigma)\sigma_{x}
+\frac{1}{2}(e^{-\tilde{\phi}}e^{-\tilde{v}}\tilde{u})_{x}\sigma^{2}.
\end{aligned}
\end{eqnarray}
Substituting \eqref{2.3newh}, \eqref{2.5new} and  \eqref{2.2new} into \eqref{2.1new}, we have
\begin{eqnarray}\label{2.6new}
 \begin{aligned}[b]
&\left(\frac{\CE_0}{\tilde{n}}+\frac{1}{2}e^{-\phi}e^{-v}\sigma^{2}+\frac{1}{2}e^{-v}\sigma_{x}^{2}\right)_t
\\&+\left(\frac{\CH_0}{\tilde{n}}+\frac{1}{2}e^{-\tilde{v}}\tilde{u}\sigma_{x}^{2}-u\varphi\sigma
-\frac{1}{2}\tilde{u}e^{-\tilde{\phi}}e^{-\tilde{v}}\sigma^{2}-e^{-v}\sigma\sigma_{xt}\right)_x
=\CN_2,
\end{aligned}
\end{eqnarray}
where
\begin{eqnarray}\label{2.6nelkw}
 \begin{aligned}[b]
\CN_2=&-\tilde{v}_{x}(\frac{\CH_0}{\tilde{n}}-\psi\sigma-e^{-v}\sigma\sigma_{xt})
-\frac{\CD_0}{\tilde{n}}
+\frac{1}{2}(e^{-\tilde{v}}\tilde{u})_{x}\sigma_{x}^{2}-\tilde{u}_{x}\varphi\sigma
-\frac{1}{2}(e^{-\tilde{\phi}}e^{-\tilde{v}}\tilde{u})_{x}\sigma^{2}
\\&+\frac{\CR_0}{\tilde{n}}
-\frac{1}{2}e^{-\phi}e^{-v}(\varphi_{t}+\sigma_{t})\sigma^{2}-\frac{1}{2}e^{-v}\varphi_{t}\sigma_{x}^{2}+e^{-v}\varphi_{x}\sigma\sigma_{xt}
-\sigma\varphi\psi_{x}-\psi\varphi\sigma_{x}\\&
+\tilde{u}(e^{\varphi}-1-\varphi)\sigma_{x}-e^{-\tilde{\phi}}e^{-\tilde{v}}\tilde{u}(e^{-\sigma}-1+\sigma)\sigma_{x}.
\end{aligned}
\end{eqnarray}

Then, multiplying \eqref{2.6new} by $ W_{\varepsilon,\beta_{1}}$ and
integrating the resulting equation over $\mathbb{R}_{+}$,
one deduces that
\begin{eqnarray}\label{2.7new}
\begin{aligned}[b]
&\frac{d}{dt}\int_{\mathbb{R}_{+}} W_{\varepsilon,\beta_{1}}\left(\frac{\CE_0}{\tilde{n}}+\frac{1}{2}e^{-\phi}e^{-v}\sigma^{2}+\frac{1}{2}e^{-v}\sigma_{x}^{2}\right)
dx\\&+\left[-\frac{\CH_0}{\tilde{n}}-\frac{1}{2}e^{-\tilde{v}}\tilde{u}\sigma_{x}^{2}+u\varphi\sigma
+\frac{1}{2}\tilde{u}e^{-\tilde{\phi}}e^{-\tilde{v}}\sigma^{2}+e^{-v}\sigma\sigma_{xt}\right]_{x=0}\\[2mm]
&+\varepsilon\beta_{1}\int_{\mathbb{R}_{+}} W_{\varepsilon-1,\beta_{1}}\left[-\frac{\CH_0}{\tilde{n}}+u\varphi\sigma
+\frac{1}{2}\tilde{u}e^{-\tilde{\phi}}e^{-\tilde{v}}\sigma^{2}-\frac{1}{2}e^{-\tilde{v}}\tilde{u}\sigma_{x}^{2}\right]dx
\\&+\varepsilon\beta_{1}\int_{\mathbb{R}_{+}} W_{\varepsilon-1,\beta_{1}}e^{-v}\sigma\sigma_{xt}dx
=\int_{\mathbb{R}_{+}} W_{\varepsilon,\beta_{1}}\CN_2 dx.
\end{aligned}
\end{eqnarray}

Due to the boundary condition \eqref{1.19}, we have
\begin{align}
 &\Big[-\frac{\CH_0}{\tilde{n}}-\frac{1}{2}e^{-\tilde{v}}\tilde{u}\sigma_{x}^{2}+u\varphi\sigma
+\frac{1}{2}\tilde{u}e^{-\tilde{\phi}}e^{-\tilde{v}}\sigma^{2}+e^{-v}\sigma\sigma_{xt}\Big]_{x=0}\notag\\[2mm]
=&\Big[-\frac{R}{2}Tu\varphi^{2}-RT\varphi\psi-\frac{1}{2}mu\psi^{2}-R\zeta\psi-\frac{R
u}{2(\gamma-1)T}\zeta^{2}\notag\\[2mm]
&+u\varphi\sigma
-\tilde{u}(1-e^{-\varphi})\sigma
+\frac{1}{2}\tilde{u}e^{-\tilde{\phi}}e^{-\tilde{v}}\sigma^{2}+u_{\infty}e^{-v}(e^{-\sigma}-1)\sigma-\frac{1}{2}e^{-\tilde{v}}\tilde{u}\sigma_{x}^{2}\Big]_{x=0}
\notag\\[2mm]\geq&\Big[-\frac{R}{2}T_{\infty}u_{\infty}\varphi^{2}-RT_{\infty}\varphi\psi-\frac{mu_{\infty}}{2}\psi^{2}-R\zeta\psi-\frac{R
u_{\infty}}{2(\gamma-1)T_{\infty}}\zeta^{2}-\frac{u_{\infty}}{2}(\sigma^{2}+\sigma_{x}^{2})\Big]_{x=0}\notag\\[2mm]&
-C(\mathcal{N}_{\lambda,\beta}(M)+\phi_{b})[\sigma^{2}+\varphi^{2}+\psi^{2}+\zeta^{2}]_{x=0}\notag\\[2mm]
\geq& (c-C\delta)[\varphi^{2}+\psi^{2}+\zeta^{2}+\sigma^{2}+\sigma_{x}^{2}]_{x=0}
,\label{2.8new}
\end{align}
where the last inequality's computations are similar to obtain  \eqref{2.10a}.

Using \eqref{2.3} and $\beta_{1}\leq \beta$, the estimate for the
$\varepsilon\beta_{1}\int_{\mathbb{R}_{+}} W_{\varepsilon-1,\beta_{1}}\left[u\varphi\sigma
+\frac{1}{2}\tilde{u}e^{-\tilde{\phi}}e^{-\tilde{v}}\sigma^{2}\right]dx$ is given by
\begin{eqnarray}\label{2.9new}
\begin{aligned}[b]
&\varepsilon\beta_{1}\int_{\mathbb{R}_{+}} W_{\varepsilon-1,\beta_{1}}\left[u\varphi\sigma
+\frac{1}{2}\tilde{u}e^{-\tilde{\phi}}e^{-\tilde{v}}\sigma^{2}\right]dx\\
=&\varepsilon\beta_{1}\int_{\mathbb{R}_{+}} W_{\varepsilon-1,\beta_{1}}\left[\tilde{u}\varphi\sigma
+\frac{1}{2}\tilde{u}e^{-\tilde{\phi}}e^{-\tilde{v}}\sigma^{2}+\psi\varphi\sigma\right]dx\\
\geq&\varepsilon\beta_{1}\int_{\mathbb{R}_{+}} W_{\varepsilon-1,\beta_{1}}\left[\tilde{u}e^{-\tilde{v}}\sigma_{xx}\sigma
-\frac{1}{2}\tilde{u}e^{-\tilde{\phi}}e^{-\tilde{v}}\sigma^{2}\right]dx-C\beta_{1}\mathcal{N}_{\lambda,\beta}(M)\|(\varphi,\sigma)\|_{\varepsilon-3,\beta_{1}}^{2}
\\
\geq&\varepsilon\beta_{1}\int_{\mathbb{R}_{+}} W_{\varepsilon-1,\beta_{1}}\left[-\tilde{u}e^{-\tilde{v}}\sigma_{x}^{2}
-\frac{1}{2}\tilde{u}e^{-\tilde{\phi}}e^{-\tilde{v}}\sigma^{2}\right]dx-\varepsilon\beta_{1}\int_{\mathbb{R}_{+}} W_{\varepsilon-1,\beta_{1}}(\tilde{u}e^{-\tilde{v}})_{x}\sigma\sigma_{x}dx\\&-\varepsilon(\varepsilon-1)\beta_{1}^{2}\int_{\mathbb{R}_{+}} W_{\varepsilon-2,\beta_{1}}\tilde{u}e^{-\tilde{v}}\sigma\sigma_{x}dx
-C\beta_{1}\mathcal{N}_{\lambda,\beta}(M)\|(\varphi,\sigma)\|_{\varepsilon-3,\beta_{1}}^{2}
-C\beta_{1}[\sigma^{2}+\sigma_{x}^{2}]_{x=0}
\\
\geq&\varepsilon\beta_{1}\int_{\mathbb{R}_{+}} W_{\varepsilon-1,\beta_{1}}\left[-\tilde{u}e^{-\tilde{v}}\sigma_{x}^{2}
-\frac{1}{2}\tilde{u}e^{-\tilde{\phi}}e^{-\tilde{v}}\sigma^{2}\right]dx
-C\beta_{1}(\beta_{1}^{2}+\delta\beta^{3})\|(\varphi,\sigma)\|_{\varepsilon-3,\beta_{1}}^{2}\\&-C\beta_{1}\|\sigma_{x}\|_{\varepsilon-1,\beta_{1}}^{2}
-C\beta_{1}[\sigma^{2}+\sigma_{x}^{2}]_{x=0}
\end{aligned}
\end{eqnarray}
where the boundedness of $S(x)$ are used in deriving the last inequality. First, one can decompose $v$, $u$ and $T$ as $v=\varphi+(\tilde{v}-0)$, $u=\psi+(\tilde{u}-u_{\infty})+u_{\infty}$ and  $T=\zeta+(\tilde{T}-T_{\infty})+T_{\infty}$, respectively. Then using \eqref{2.9new}, integration by parts, the Schwarz inequality, \eqref{1.14d}, \eqref{1.15b}, Lemma \ref{main.result4.2} and  Lemma \ref{main.result4}, we have the following estimates:
\begin{align}
&\varepsilon\beta_{1}\int_{\mathbb{R}_{+}} W_{\varepsilon-1,\beta_{1}}\Big[-\frac{\CH_0}{\tilde{n}}+u\varphi\sigma
+\frac{1}{2}\tilde{u}e^{-\tilde{\phi}}e^{-\tilde{v}}\sigma^{2}-\frac{1}{2}e^{-\tilde{v}}\tilde{u}\sigma_{x}^{2}\Big]dx\notag\\[2mm]
\geq&\varepsilon\beta_{1}\int_{\mathbb{R}_{+}} W_{\varepsilon-1,\beta_{1}}\Big[\frac{1}{2}RT_{\infty}(-u_{\infty})\varphi^{2}-RT_{\infty}\varphi\psi+\frac{m}{2}(-u_{\infty})\psi^{2}
-R\zeta\psi+\frac{R
(-u_{\infty})}{2(\gamma-1)T_{\infty}}\zeta^{2}\notag\\[2mm]
&+\sigma\psi+\frac{-u_{\infty}}{2}\sigma^{2}+\frac{3(-u_{\infty})}{2}\sigma_{x}^{2}\Big]dx
-C\beta_{1}(\beta_{1}^{2}+\mathcal{N}_{\lambda,\beta}(M))\|(\varphi,\psi,\zeta,\sigma)\|_{\varepsilon-3,\beta_{1}}^{2}\notag\\[2mm]&-C\beta_{1}\|\sigma_{x}\|_{\varepsilon-1,\beta_{1}}^{2}
-C\beta_{1}[\sigma^{2}+\sigma_{x}^{2}]_{x=0}\notag\\[2mm]
\geq&\varepsilon\beta_{1}\int_{\mathbb{R}_{+}} W_{\varepsilon-1,\beta_{1}}\Big[\frac{RT_{\infty}}{2}|u_{\infty}|(\varphi-\frac{1}{|u_{\infty}|}\psi)^{2}
+\frac{(\gamma-1)RT_{\infty}}{2|u_{\infty}|}(\psi-\frac{|u_{\infty}|}{(\gamma-1)T_{\infty}}\zeta)^{2}\notag\\[2mm]&+\frac{|u_{\infty}|}{2}(\sigma+\frac{1}{|u_{\infty}|}\psi)^{2}\Big]dx-C\beta_{1}(\beta_{1}^{2}
+\mathcal{N}_{\lambda,\beta}(M))\|(\varphi,\psi,\zeta,\sigma)\|_{\varepsilon-3,\beta_{1}}^{2}\notag\\[2mm]&-C\beta_{1}\|\sigma_{x}\|_{\varepsilon-1,\beta_{1}}^{2}
-C\beta_{1}[\sigma^{2}+\sigma_{x}^{2}]_{x=0}\notag\\[2mm]
\geq&-C\beta_{1}(\beta_{1}^{2}+\delta\beta^{3})\|(\varphi,\psi,\zeta)\|_{\varepsilon-3,\beta_{1}}^{2}-C\beta_{1}\|\sigma_{x}\|_{\varepsilon-1,\beta_{1}}^{2}
-C\beta_{1}[\sigma^{2}+\sigma_{x}^{2}]_{x=0},
\label{2.10new}
\end{align}
\begin{eqnarray*}\label{2.11new}
\begin{aligned}[b]
&\varepsilon\beta_{1}\int_{\mathbb{R}_{+}} W_{\varepsilon-1,\beta_{1}}e^{-v}\sigma\sigma_{xt}dx\\
=&\varepsilon\beta_{1}\int_{\mathbb{R}_{+}} (W_{\varepsilon-1,\beta_{1}}e^{-v}\sigma\sigma_{t})_{x}dx
-\varepsilon\beta_{1}\int_{\mathbb{R}_{+}} W_{\varepsilon-1,\beta_{1}}e^{-v}\sigma_{x}\sigma_{t}dx\\&
-\varepsilon(\varepsilon-1)\beta_{1}^{2}\int_{\mathbb{R}_{+}} W_{\varepsilon-2,\beta_{1}}e^{-v}\sigma\sigma_{t}dx
+\varepsilon\beta_{1}\int_{\mathbb{R}_{+}} W_{\varepsilon-1,\beta_{1}}e^{-v}v_{x}\sigma\sigma_{t}dx,
\end{aligned}
\end{eqnarray*}
\begin{eqnarray}\label{2.12new}
\begin{aligned}[b]
&\left|\varepsilon\beta_{1}\int_{\mathbb{R}_{+}} W_{\varepsilon-1,\beta_{1}}e^{-v}\sigma\sigma_{xt}dx\right|\\
\leq&C\beta_{1}[\sigma^{2}+\sigma_{t}^{2}]_{x=0}+C\beta_{1}\|(\sigma_{x},\sigma_{t})\|_{\varepsilon-1,\beta_{1}}^{2}
+C\beta_{1}^{3}\|\sigma\|_{\varepsilon-3,\beta_{1}}^{2}\\&+C\beta_{1}\|\sigma\|_{\infty}\|(\sigma_{x},\varphi_{x})\|_{\varepsilon-1,\beta_{1}}^{2}
\\
\leq&C\beta_{1}[\varphi^{2}+\psi^{2}+\sigma^{2}]_{x=0}+C\beta_{1}\|(\sigma_{x},\varphi_{t})\|_{\varepsilon-1,\beta_{1}}^{2}
+C\beta_{1}^{3}\|\sigma\|_{\varepsilon-3,\beta_{1}}^{2}\\&+C\mathcal{N}_{\lambda,\beta}(M)\beta_{1}\|(\sigma_{x},\varphi_{x})\|_{\varepsilon-1,\beta_{1}}^{2}
\\
\leq&C\beta_{1}[\varphi^{2}+\psi^{2}+\sigma^{2}+\sigma_{x}^{2}]_{x=0}+C(\beta_{1}+\mathcal{N}_{\lambda,\beta}(M)\beta_{1})\|(\sigma_{x},\psi_{x},\varphi_{x})\|_{\varepsilon-1,\beta_{1}}^{2}
\\&+C\beta_{1}^{3}\|(\varphi,\psi)\|_{\varepsilon-3,\beta_{1}}^{2}
\end{aligned}
\end{eqnarray}

Notice that $\beta_{1}\leq \beta$  and the last term in \eqref{2.7new} is estimated as
\begin{eqnarray}\label{2.13new}
\begin{aligned}[b]
\int_{\mathbb{R}_{+}} W_{\varepsilon,\beta_{1}}\CN_2 dx\leq & C(\beta_{1}^{3}+\mathcal{N}_{\lambda,\beta}(M))\|(\varphi,\psi,\zeta,\sigma)\|_{\varepsilon-3,\beta_{1}}^{2}
\\&+C(\mathcal{N}_{\lambda,\beta}(M)+\beta_{1}^{2})\|(\varphi_{x},\psi_{x},\zeta_{x},\sigma_{x},\varphi_{t},\zeta_{t},\sigma_{t},\sigma_{tx})\|_{\varepsilon-1,\beta_{1}}^{2}
\\ \leq &C(\beta_{1}^{3}+\delta\beta^{3})\|(\varphi,\psi,\zeta)\|_{\varepsilon-3,\beta_{1}}^{2}+C(\beta_{1}^{2}+\delta\beta^{3})\|(\varphi_{x},\psi_{x},\zeta_{x},\sigma_{x})\|_{\varepsilon-1,\beta_{1}}^{2} \\&+C(\beta_{1}^{2}+\delta\beta^{3})[\varphi^{2}+\psi^{2}+\sigma^{2}+\sigma_{x}^{2}]_{x=0},
\end{aligned}
\end{eqnarray}
where we repeatedly use  Lemma \ref{main.result4.2} and  Lemma \ref{main.result4}.

Substituting \eqref{2.8new}-\eqref{2.13new} into \eqref{2.7new}, we have
\begin{align}
&\frac{d}{dt}\int_{\mathbb{R}_{+}} W_{\varepsilon,\beta_{1}}\left(\frac{\CE_0}{\tilde{n}}+\frac{1}{2}e^{-\phi}e^{-v}\sigma^{2}+\frac{1}{2}e^{-v}\sigma_{x}^{2}\right)
dx+c[\varphi^{2}+\psi^{2}+\zeta^{2}+\sigma^{2}+\sigma_{x}^{2}]_{x=0}\notag\\
\leq &C(\beta_{1}^{3}+\delta\beta^{3})\|(\varphi,\psi,\zeta)\|_{\varepsilon-3,\beta}^{2}+C(\beta_{1}+\delta\beta^{3})\|(\varphi_{x},\psi_{x},\zeta_{x},\sigma_{x})\|_{\varepsilon-1,\beta}^{2},\label{2.14new}
\end{align}
provided that $0<\beta_{1}\leq \beta$  and $\delta$ are sufficiently small, where $\CE_0$ is defined in \eqref{def.e0}. Furthermore, multiplying \eqref{2.14new} by $(1+\beta \tau)^{\xi}$ and integrating the resulting inequality
over $(0,t)$ give the desired estimate \eqref{2.uykjui}.  This  completes the proof of Lemma \ref{main.resultloik0}.
\end{proof}

In Lemmata \ref{main.result2.9} and \ref{main.resultloik0}, we have obtained an a priori estimate up to the first order derivatives.
Following almost the same arguments, we give estimates of the highest order derivatives in the following Lemmata \ref{main.result2.erer9} and \ref{mainboundary2g}.
The computations in Lemma \ref{main.result2.erer9} are only formal in the sense
that they contain the derivatives with order exceeding two. For the complete argument, we have to operate mollifiers
in the temporal direction to cover the lack of regularity and tend the size of mollifiers
to zero after integration in time and space to deduce the desired estimates. However,
we omit such standard arguments and proceed assuming that the regularity of the
solution is as high as we need in the formal computations.
Obviously, Lemmata \ref{main.result2.erer9} and \ref{mainboundary2g} correspond to Lemmata \ref{main.result2.9} and \ref{main.resultloik0}, respectively.
Hence, the proofs are given just briefly.

\begin{lemma}\label{main.result2.erer9}
Under the same conditions as in Proposition \ref{ste.pro1.1}, there exist
positive constants $C$ and $\delta$ independent of $M$ such that if
conditions \eqref{b}-\eqref{d} are satisfied, it holds for
any $t\in[0,M]$ and any $\xi\geq 0$ that
\begin{align}
 &(1+\beta t)^{\xi}\|(\varphi_{t},\psi_{t},\zeta_{t})(t)\|_{\varepsilon,\beta,1}^{2}\notag\\
&\quad +\int_{0}^{t} (1+\beta
 \tau)^{\xi}\left\{\beta^{3}\|(\varphi_{t},\psi_{t},\zeta_{t})\|_{\varepsilon-3,\beta}^{2}
 +\beta\|(\varphi_{tx},\psi_{tx},\zeta_{tx},\sigma_{tx})\|_{\varepsilon-1,\beta}^{2}\right\}d\tau\notag\\
& \leq C\|(\varphi_{0t},\psi_{0t},\zeta_{0t})\|_{\varepsilon,\beta,1}^{2}
 +C\xi\beta\int_{0}^{t} (1+\beta
 \tau)^{\xi-1}\|(\varphi_{t},\psi_{t},\zeta_{t})\|_{\varepsilon,\beta,1}^{2}
 d\tau\notag\\
 &\quad+C\int_{0}^{t}(1+\beta
\tau)^{\xi}\left[
\delta\beta\|(\varphi_{x},\psi_{x},\zeta_{x})\|_{\varepsilon-1,\beta}^{2}+\delta\beta^{3}\|(\varphi,\psi,\zeta,\varphi_{xx},\psi_{xx},\zeta_{xx})\|_{\varepsilon-3,\beta}^{2}\right]d\tau
\notag\\&\quad+C\int_{0}^{t}(1+\beta
\tau)^{\xi}[\varphi^{2}+\psi^{2}+\sigma^{2}+\sigma_{t}^{2}]_{x=0}d\tau.
\label{2.35}
 \end{align}
\end{lemma}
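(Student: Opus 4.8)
The plan is to mimic the argument of Lemma \ref{main.result2.9}, but applied to the system obtained by differentiating \eqref{2.0} in time. First I would differentiate \eqref{2.0} once in $t$; this produces a system for $(\varphi_t,\psi_t,\zeta_t)$ with the same principal part as \eqref{2.0} but with extra lower-order source terms coming from the $t$-derivatives of the variable coefficients $RT$, $u$, etc., and from the $t$-derivative of the stationary-solution forcing on the right-hand side. Since $\tilde v,\tilde u,\tilde T,\tilde\phi$ are stationary, their $t$-derivatives vanish, so the new forcing terms involve only $\partial_t$ of the perturbation coefficients (i.e.\ factors like $\zeta_t$, $\psi_t$) paired with first-order derivatives, hence they are all quadratically small once the \emph{a priori} smallness \eqref{d} is in force. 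Taking the inner product with $\tilde n(\varphi_t,\psi_t,\zeta_t)$ and then, after one more $x$-differentiation, with $\tilde n(\varphi_{tx},\psi_{tx},\zeta_{tx})$, I obtain the time-derivative analogues of \eqref{2.1} and \eqref{2.2}. The coupling with $\sigma$ is handled exactly as in \eqref{2.3p1}--\eqref{2.4}: using the $t$-differentiated Poisson relation \eqref{2.3new} (which is already recorded in the excerpt) together with the first component of the $t$-differentiated \eqref{1.16}, the terms $\tilde n\psi_{tx}\sigma_t$ and $-\tilde n\psi_{tx}\sigma_{txx}$ combine into a perfect $t$-derivative plus a good $x$-flux plus quadratically small remainders.

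Next I would form the combination \emph{(} $t$-differentiated \eqref{2.1} \emph{)} $\times e^{-\tilde\phi}$ $+$ \emph{(} $t$-differentiated \eqref{2.4} \emph{)}, multiply by the space weight $W_{\varepsilon,\beta}$, and integrate over $\mathbb{R}_+$, producing the exact analogue of \eqref{2.7} with $(\varphi,\psi,\zeta,\sigma)$ replaced by $(\varphi_t,\psi_t,\zeta_t,\sigma_t)$. The crucial structural point is that the zeroth- and first-order dissipation in $(\varphi_t,\psi_t,\zeta_t,\sigma_{tx})$ is controlled by exactly the same two integrals $I_1$ and $I_2$ built from the stationary profile, so the coercivity is governed by the \emph{same} quadratic forms $Q(x)$ and the \emph{same} scalar inequalities \eqref{2.19}--\eqref{2.20}, which have already been established; in particular no new algebraic miracle is needed. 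Thus one recovers, verbatim in the $t$-differentiated variables, the key estimate \eqref{2.16} and hence the differential inequality \eqref{2.26} for $\partial_t$-data: the left side carries $\frac{d}{dt}\int W_{\varepsilon,\beta}(e^{-\tilde\phi}\CE_0+\CE_1^{\rm x}+\frac12\tilde n^2\varphi^2)$ evaluated on $\partial_t$ of the unknowns, plus $c\beta^3\|(\varphi_t,\psi_t,\zeta_t)\|_{\varepsilon-3,\beta}^2+c\beta\|(\varphi_{tx},\psi_{tx},\zeta_{tx},\sigma_{tx})\|_{\varepsilon-1,\beta}^2$, while the right side is the boundary term $[\varphi^2+\psi^2+\sigma^2+\sigma_t^2]_{x=0}$ together with the small cross terms displayed in \eqref{2.35}.

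The only genuinely new bookkeeping, compared with Lemma \ref{main.result2.9}, concerns two points. First, the remainder $\CN_1$-analogue now contains terms such as $\zeta_{tt}$, $\varphi_{tt}$ (arising when one differentiates $\CR_0$, $\CR_1^{\rm x}$ in $t$): these second-order-in-time quantities are eliminated using the $t$-differentiated equations \eqref{1.16} to trade one time derivative for one space derivative, which is exactly why the right-hand side of \eqref{2.35} carries $\delta\beta^3\|(\varphi,\psi,\zeta,\varphi_{xx},\psi_{xx},\zeta_{xx})\|_{\varepsilon-3,\beta}^2$ and $\delta\beta\|(\varphi_x,\psi_x,\zeta_x)\|_{\varepsilon-1,\beta}^2$; this is the formal over-differentiation whose rigorous justification via temporal mollifiers is, as the text already announces, suppressed. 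Second, the boundary flux must be treated carefully: from the boundary condition \eqref{1.19} differentiated in $t$, $\sigma_{xtt}(t,0)$ is expressed through $\psi_t$, $\varphi_t$, $\sigma_t$ and lower order, so the boundary contribution of the $t$-differentiated $\CH_0$-flux is controlled, after a Cauchy--Schwarz split and using $u_\infty<0$ and the smallness of $\delta$, by $(c-C\delta)[\varphi_t^2+\psi_t^2+\zeta_t^2+\sigma_t^2+\sigma_{tx}^2]_{x=0}$ minus a controllable multiple of $[\sigma^2+\varphi^2+\psi^2+\sigma_t^2]_{x=0}$, reproducing the analogues of \eqref{2.11}--\eqref{2.11b}; the surviving uncontrolled boundary data $[\varphi^2+\psi^2+\sigma^2+\sigma_t^2]_{x=0}$ is precisely what is left on the right-hand side of \eqref{2.35}. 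Finally, multiplying the resulting differential inequality by $(1+\beta\tau)^\xi$ and integrating over $(0,t)$, with $\xi\beta(1+\beta\tau)^{\xi-1}$ absorbing the derivative hitting the time weight, yields \eqref{2.35}.

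\textbf{Main obstacle.} I expect the real work to be in the remainder estimates: one must verify that every term generated by differentiating the already-complicated $\CR_0$, $\CR_1^{\rm x}$, $\CN_1$ in time is either quadratically small (hence absorbed by $C\delta$ times the dissipation) or can be integrated by parts in $x$ to move a derivative off a second-order-in-time factor, picking up at worst a boundary term already present on the right of \eqref{2.35}. The weighted elliptic estimates (Lemma \ref{main.result4}) and the decay bounds on the stationary profile (Lemma \ref{main.result4.2}, \eqref{1.14d}) must be invoked repeatedly to guarantee that the extra $G^{-2}$, $G^{-3}$ factors and the coefficient derivatives $\tilde v_{xx}$, $\tilde u_{xx}$, $\tilde T_{xx}$ stay of the right order in $\beta$; keeping track of which power of $\beta$ each term carries, so that the final inequality has the claimed structure with the dissipation in $\|\cdot\|_{\varepsilon-3,\beta}$ and $\|\cdot\|_{\varepsilon-1,\beta}$, is the bookkeeping-heavy core of the argument. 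No new idea beyond those in Lemma \ref{main.result2.9} is needed, which is why the paper states the proof can be given briefly.
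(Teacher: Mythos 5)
Your proposal follows the paper's own route essentially verbatim: differentiate \eqref{2.0} in $t$ (and in $x,t$), take the weighted inner products with $\tilde{n}(\varphi_t,\psi_t,\zeta_t)$ and $\tilde{n}(\varphi_{xt},\psi_{xt},\zeta_{xt})$, combine via the $e^{-\tilde{\phi}}$-multiplied identity and the time-differentiated Poisson equation as in \eqref{2.4}, reuse the same coercivity structure of $I_1+I_2$ (hence the same quadratic form $Q$) for the $t$-differentiated variables, trade $\partial_t^2$ for space derivatives through \eqref{1.16}, and close by multiplying with $(1+\beta\tau)^{\xi}$ and integrating, exactly as the paper does. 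The only minor imprecision is that for this lemma the surviving boundary terms $[\varphi^2+\psi^2+\sigma^2]_{x=0}$ come from applying the undifferentiated boundary condition \eqref{1.19} to $\sigma_{xt}(t,0)$, whereas the $t$-differentiated condition \eqref{1.19tnews} you invoke is only needed later in Lemma \ref{mainboundary2g}; this does not affect the validity of your argument.
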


\begin{proof}
We follow the same steps as in deriving \eqref{2.10} in the proof of Lemma \ref{main.result2.9}.
On one hand, taking the time derivative on \eqref{2.0}, then taking the inner product of the resulting system with $\tilde{n}(\varphi_{t},\psi_{t},\zeta_{t})$ and using $\tilde{v}_{x}=\frac{\tilde{n}_{x}}{\tilde{n}}$, it follows that
\begin{equation}\label{2.27}
(\CE_1^{\rm t})_t+(\CH_1^{\rm t})_{x}+\CD_1^{\rm t}+\tilde{n}\psi_{xt}\sigma_{t}=\CR_1^{\rm t},
\end{equation}
where we have denoted
\begin{equation}
\label{1}
\CE_1^{\rm t}=\frac{\tilde{n}}{2}RT\varphi_{t}^{2}+\frac{\tilde{n}}{2}m\psi_{t}^{2}+\frac{\tilde{n}R}{2(\gamma-1)T}\zeta_{t}^{2},
\end{equation}
\begin{equation}
\label{2}
\CH_1^{\rm t}=\frac{\tilde{n}}{2}RTu\varphi_{t}^{2}+\tilde{n}RT\varphi_{t}\psi_{t}+\frac{\tilde{n}}{2}mu\psi_{t}^{2}+R\tilde{n}\zeta_{t}\psi_{t}+\frac{\tilde{n}R
u}{2(\gamma-1)T}\zeta_{t}^{2}
-\tilde{n}\sigma_{t}\psi_{t},
\end{equation}
\begin{eqnarray}
\CD_1^{\rm t}&=&\left(-\frac{R T
u}{2}\tilde{n}_{x}-\frac{R \tilde{n}u}{2}\tilde{T}_{x}-\frac{R
\tilde{n}T}{2}\tilde{u}_{x}\right)\varphi_{t}^{2}-\tilde{n}R\tilde{T}_{x}\varphi_{t}\psi_{t}\notag\\[2mm]
&&+\left(\frac{m\tilde{n}}{2}\tilde{u}_{x}-\frac{mu}{2}\tilde{n}_{x}\right)\psi_{t}^{2}
+\frac{R\tilde{n}}{(\gamma-1)T}\tilde{T}_{x}\zeta_{t}\psi_{t}+\tilde{n}_{x}\sigma_{t}\psi_{t}\notag\\[2mm]
&&+\left[\frac{R\tilde{n}}{T}\tilde{u}_{x}
-\frac{Ru\tilde{n}_{x}+R\tilde{n}\tilde{u}_{x}}{2(\gamma-1)T}+\frac{R
u\tilde{n} \tilde{T}_{x}}{2(\gamma-1)T^{2}}\right]\zeta_{t}^{2},\label{3}
\end{eqnarray}
and
\begin{align}
\CR_1^{\rm t}&=\left(-\frac{\tilde{n}R}{2}\zeta_{t}+\frac{\tilde{n}R
u}{2}\zeta_{x}+\frac{\tilde{n}R T}{2}\psi_{x}\right)\varphi_{t}^{2}
+R\tilde{n}\zeta_{x}\varphi_{t}\psi_{t}-\frac{m\tilde{n}}{2}\psi_{x}\psi_{t}^{2}\notag\\[3mm]
&\quad-(\tilde{n}Ru\zeta_{t}+\tilde{n}R
T\psi_{t})\varphi_{x}\varphi_{t}-\frac{R \tilde{n} \psi_{t}\zeta_{x}\zeta_{t}}{(\gamma-1)T}\notag\\[3mm]
&\quad
+R\tilde{n}\left(\frac{\psi_{x}}{2(\gamma-1)T}+\frac{\zeta_{t}+u\zeta_{x}}{2(\gamma-1)T^{2}}
+\frac{\tilde{u}_{x}\zeta}{T^{2}}
+\frac{\tilde{T}_{x}\psi}{(\gamma-1)T^{2}}\right)\zeta_{t}^{2}\notag\\[3mm]
&\quad-\tilde{n}R\zeta_{t}(\psi_{x}\varphi_{t}+\psi_{t}\varphi_{x})-R\tilde{n}_{x}\psi\zeta_{t}\varphi_{t}.\label{4}
\end{align}
On the other hand, taking derivatives with respect to $x$ and $t$ about \eqref{2.0}, then taking the inner product of the resulting system with $\tilde{n}(\varphi_{xt},\psi_{xt},\zeta_{xt})$ and using $\tilde{v}_{x}=\frac{\tilde{n}_{x}}{\tilde{n}}$ again, it also follows that
\begin{equation}\label{2.28}
(\CE_2^{\rm xt})_t+(\CH_2^{\rm xt})_x
-\tilde{n}\psi_{xt}\sigma_{xxt}=\CR_2^{\rm xt},
\end{equation}
where we have denoted
\begin{equation*}
%\label{ }
\CE_2^{\rm xt}=\frac{\tilde{n}}{2}RT\varphi_{xt}^{2}+\frac{\tilde{n}}{2}m\psi_{xt}^{2}+\frac{\tilde{n}R}{2(\gamma-1)T}\zeta_{xt}^{2},
\end{equation*}
\begin{equation*}
%\label{ }
\CH_2^{\rm xt}=\frac{\tilde{n}}{2}RTu\varphi_{xt}^{2}+\tilde{n}RT\varphi_{xt}\psi_{xt}+\frac{\tilde{n}}{2}mu\psi_{xt}^{2}+R\tilde{n}\zeta_{xt}\psi_{xt}+\frac{\tilde{n}R
u}{2(\gamma-1)T}\zeta_{xt}^{2},
\end{equation*}
and
\begin{equation*}
%\label{ }
\CR_2^{\rm xt}=\CR_{2,1}^{\rm xt}+\CR_{2,2}^{\rm xt}
\end{equation*}
with
\begin{align}
\CR_{2,1}^{\rm xt}
&=
\left[-\frac{\tilde{n}R}{2}\zeta_{t}+ \frac{R T
u}{2}\tilde{n}_{x}-\frac{R \tilde{n}}{2}(T
u)_{x}\right]\varphi_{xt}^{2} +\left[\frac{
mu}{2}\tilde{n}_{x}-\frac{m\tilde{n}}{2}u_{x}-m\tilde{n} \tilde{u}
_{x} \right]\psi_{xt}^{2}\notag \\[3mm]
&\quad+\left[ \frac{R
u}{2(\gamma-1)T}\tilde{n}_{x}+\frac{R
\tilde{n}u}{2(\gamma-1)T^{2}}T_{x}+\frac{\tilde{n}R}{2(\gamma-1)T^{2}}\zeta_{t}-\frac{R
\tilde{n}}{2(\gamma-1)T}u_{x}\right]\zeta_{xt}^{2}\notag\\[3mm]
&\quad
 -R\tilde{n}T_{x}(\varphi_{tt}+\psi
_{xt})\varphi_{xt}+ RT\tilde{n}_{x}\psi
_{xt}\varphi_{xt}+\frac{R\tilde{n}T_{x}}{(\gamma-1)T^{2}}\zeta_{tt}\zeta_{xt}\notag\\[3mm]
&\quad-\tilde{n}\tilde{u}_{xx}\left[m\psi_{t}\psi_{xt}+\left(\frac{R\zeta}{T}\right)_{t}\zeta_{xt}\right]
-R\tilde{n}\varphi_{xt}\varphi_{t}\zeta_{xt}
-\frac{R}{\gamma-1}\left(\frac{\tilde{n}}{T}\right)_{xt}\zeta_{xt}\zeta_{t}\notag
\end{align}
and
\begin{align*}
\CR_{2,2}^{\rm xt}&=
-\tilde{n}(RTu)_{xt}\varphi_{xt}\varphi_{x}
-\tilde{n}m\psi_{x}\psi_{xt}^{2}-\frac{R\tilde{n}}{\gamma-1}\left(\frac{u}{T}\right)_{xt}\zeta_{xt}\zeta_{x}
-R\tilde{n}(\varphi_{xt}\psi_{x}+\psi_{xt}\varphi_{x})\zeta_{xt}\notag\\[2mm]
&\quad-\tilde{n}(RTu)_{t}\varphi_{xt}\varphi_{xx}
-R\tilde{n}(\varphi_{xt}\psi_{xx}+\psi_{xt}\varphi_{xx})\zeta_{t}-\tilde{n}m\psi_{t}\psi_{xt}\psi_{xx}\notag\\[2mm]
&\quad
-\frac{R\tilde{n}}{\gamma-1}\left(\frac{u}{T}\right)_{t}\zeta_{xt}\zeta_{xx}-\tilde{n}_{x}(RT\psi)_{xt}\varphi_{xt}
-\tilde{n}\tilde{u}_{x}\left(\frac{R\zeta}{T}\right)_{xt}\zeta_{xt}-\frac{R\tilde{n}}{\gamma-1}\tilde{T}_{x}
\left(\frac{\psi}{T}\right)_{xt}\zeta_{xt}\notag\\[2mm]
&\quad
-\tilde{n}\tilde{v}_{xx}\left[(RT\psi)_{t}\varphi_{xt}+R\zeta_{t}\psi_{xt}\right]-\frac{R\tilde{n}}{\gamma-1}\tilde{T}_{xx}\left(\frac{\psi}{T}\right)_{t}\zeta_{xt}.
\end{align*}
Then, with the help of Poisson equation \eqref{1.17}, multiplying \eqref{2.27} by $e^{-\tilde{\phi}}$, adding the
resulting equation to \eqref{2.28} and dealing with the term $\tilde{n}\psi_{xt}\sigma_{t}\cdot e^{-\tilde{\phi}}-\tilde{n}\psi_{xt}\sigma_{xxt}$
as the same method as \eqref{2.4} give that
\begin{eqnarray}
 %\begin{aligned}[b]
&\dis \left(e^{-\tilde{\phi}}\CE_1^{\rm t}+\CE_2^{\rm xt}+\frac{1}{2}\tilde{n}^{2}\varphi_{t}^{2}\right)_{t}
+\left(e^{-\tilde{\phi}}\CH_1^{\rm t}+\CH_2^{\rm xt}
+\frac{1}{2}\tilde{n}^{2}u\varphi_{t}^{2}\right)_{x}
+e^{-\tilde{\phi}}\tilde{\phi}_{x}\CH_1^{\rm t}+e^{-\tilde{\phi}}\CD_1^{\rm t}\notag \\[2mm]
&\dis +\left(-\tilde{n}u\tilde{n}_{x}-\frac{1}{2}\tilde{n}^{2}\tilde{u}_{x}
\right)\varphi_{t}^{2}
+\tilde{n}^{2}\tilde{v}_{x}\varphi_{t}\psi_{t}=\CN_{3},
\label{2.31}
%\end{aligned}
\end{eqnarray}
where
\begin{equation*}%\label{2.32}
\CN_3 =e^{-\tilde{\phi}}\CR_1^{\rm t}+\CR_2^{\rm xt}
+\frac{1}{2}\tilde{n}^{2}\psi_{x}\varphi_{t}^{2}
+\frac{\tilde{n}}{2}\left(\tilde{n}e^{\theta_{1}\varphi}\varphi^{2}
-e^{-(\theta_{2}\sigma+\tilde{\phi})}\sigma^{2}\right)_{t}\psi_{xt}
-\tilde{n}^{2}\psi_{t}\varphi_{x}\varphi_{t}.
\end{equation*}
Similarly for deriving \eqref{2.7}, it follows from \eqref{2.31} that
%\begin{eqnarray}
\begin{align}
&\frac{d}{dt}\int_{\mathbb{R}_{+}} W_{\varepsilon,\beta}
\left[e^{-\tilde{\phi}}\CE_1^{\rm t}+\CE_2^{\rm xt}+\frac{1}{2}\tilde{n}^{2}\varphi_{t}^{2}\right]dx\notag\\[3mm]
&\quad+\underbrace{\int_{\mathbb{R}_{+}}\varepsilon\beta W_{\varepsilon-1,\beta}\CL_3dx}_{I_{3}}
+\int_{\mathbb{R}_{+}}\varepsilon\beta
W_{\varepsilon-1,\beta} (-\CH_2^{\rm xt})
dx+\underbrace{\int_{\mathbb{R}_{+}} W_{\varepsilon,\beta}\CL_{4} dx}_{I_{4}}\notag\\[3mm]
&\quad
-\left[e^{-\tilde{\phi}}\CH_1^{\rm t}+\CH_2^{\rm xt}
+\frac{1}{2}\tilde{n}^{2}u\varphi_{t}^{2}\right](t,0)
=\int_{\mathbb{R}_{+}} W_{\varepsilon,\beta}\CN_{3} dx,
\label{2.33}
\end{align}
%\end{eqnarray}
where we have denoted
\begin{equation*}
%\label{ }
\CL_3=%\left[
-e^{-\tilde{\phi}}\CH_1^{\rm t}-\frac{1}{2}\tilde{n}^{2}u\varphi_{t}^{2}
%\right]
\end{equation*}
and
%\begin{equation}
 \begin{align*}%[b]
\CL_{4}=&e^{-\tilde{\phi}}\tilde{\phi}_x\CH_1^t +e^{-\tilde{\phi}}\CD_1^t+\left(-\tilde{n}u\tilde{n}_{x}-\frac{1}{2}\tilde{n}^{2}\tilde{u}_{x}
\right)\varphi_{t}^{2}
+\tilde{n}^{2}\tilde{v}_{x}\varphi_{t}\psi_{t}.
%\label{2.34}
\end{align*}
%\end{equation}

Due to the similar computations as \eqref{2.11a}-\eqref{2.11b},\eqref{2.24} and \eqref{2.25}, we have the following estimates
\begin{equation*}
\int_{\mathbb{R}_{+}}\varepsilon\beta
W_{\varepsilon-1,\beta} (-\CH_2^{\rm xt})
dx\geq
c\beta\|(\varphi_{xt},\psi_{xt},\zeta_{xt})\|_{\varepsilon-1,\beta}^{2},
\end{equation*}
\begin{equation*}
-\left[e^{-\tilde{\phi}}\CH_1^{\rm t}
+\frac{1}{2}\tilde{n}^{2}u\varphi_{t}^{2}\right](t,0)\geq (c-C\delta)[\varphi_{t}^{2}+\psi_{t}^{2}+\zeta_{t}^{2}]_{x=0}-C[\sigma_{t}^{2}]_{x=0},
\end{equation*}
\begin{equation*}
 -\CH_2^{\rm xt}(t,0)\geq (c-C\delta)[\varphi_{xt}^{2}+\psi_{xt}^{2}+\zeta_{xt}^{2}]_{x=0},
\end{equation*}
\begin{equation*}
I_{3}+I_{4}\geq
(c-C\delta)\left\{\beta^{3}\|(\varphi_{t},\psi_{t},\zeta_{t})\|_{\varepsilon-3,\beta}^{2}+\beta\|\sigma_{xt}\|_{\varepsilon-1,\beta}^{2}
\right\}-C\beta[\sigma_{t}^{2}+\varphi^{2}+\psi^{2}+\sigma^{2}]_{x=0},
\end{equation*}
and
\begin{eqnarray*}
\int_{\mathbb{R}_{+}} W_{\varepsilon,\beta}\CN_{3} dx & \leq & C\delta
\beta\|(\varphi_{x},\psi_{x},\zeta_{x},\varphi_{tx},\psi_{tx},\zeta_{tx})\|_{\varepsilon-1,\beta}^{2}
\notag\\
&&+C\delta\beta^{3}\|(\varphi,\psi,\zeta,\varphi_{t},\psi_{t},\zeta_{t},\varphi_{xx},\psi_{xx},\zeta_{xx})\|_{\varepsilon-3,\beta}^{2}
\notag\\
&&+C\delta
\beta[\varphi^{2}+\psi^{2}+\sigma^{2}]_{x=0},
\end{eqnarray*}
where we make full use of \eqref{1.19}, \eqref{1.14d}, \eqref{1.16}, \eqref{b}-\eqref{d}, $\lambda\geq 4$, the Cauchy-Schwarz inequality and Lemmata \ref{main.result4} and \ref{main.result4.2}. At last, substituting the above estimates into \eqref{2.33}, the desired estimate \eqref{2.35} follows by multiplying the resulting inequality  by $(1+\beta \tau)^{\xi}$ and integrating the resulting inequality
over $(0,t)$. Therefore, we complete the rough proof of Lemma \ref{main.result2.erer9}.
\end{proof}

\begin{lemma}\label{mainboundary2g}
Under the same conditions as in Proposition \ref{ste.pro1.1}, there exist
positive constants $C$ and $\delta$ independent of $M$ such that if
conditions \eqref{b}-\eqref{d}  and $\theta\Gamma \phi_{b}^{1/2}\leq\beta_{1}\leq \beta\leq\Gamma \phi_{b}^{1/2}$ are satisfied , it holds for
any $t\in[0,M]$ and any $\xi\geq 0$ that
\begin{align}
&(1+\beta t)^{\xi}\|(\varphi_{t},\psi_{t},\zeta_{t},\sigma_{t},\sigma_{tx})(t)\|_{\varepsilon,\beta_{1}}^{2}+\int_{0}^{t} (1+\beta
\tau)^{\xi}\left[\varphi_{t}^{2}+\psi_{t}^{2}+\zeta_{t}^{2}+\sigma_{t}^{2}+\sigma_{xt}^{2}\right]_{x=0}d\tau\notag\\
\leq& C\|(\varphi_{t0},\psi_{t0},\zeta_{t0},\sigma_{t0},\sigma_{tx0})\|_{\varepsilon,\beta}^{2}
+C\xi\beta\int_{0}^{t} (1+\beta
\tau)^{\xi-1}\|(\varphi_{t},\psi_{t},\zeta_{t},\sigma_{t},\sigma_{tx})(\tau)\|_{\varepsilon,\beta}^{2}d\tau\notag\\&
+C(\beta_{1}+\delta\beta^{3})\int_{0}^{t} (1+\beta
\tau)^{\xi}\|(\varphi_{x},\psi_{x},\zeta_{x},\sigma_{x},\sigma_{xt},\varphi_{xt},\psi_{xt})\|_{\varepsilon-1,\beta}^{2}d\tau
\notag\\&
+C(\beta_{1}^{3}+\delta\beta^{3})\int_{0}^{t} (1+\beta
\tau)^{\xi}\|(\varphi,\psi,\zeta,\varphi_{t},\psi_{t},\zeta_{t})\|_{\varepsilon-3,\beta}^{2}d\tau
\notag\\&
+C(\beta_{1}+\delta\beta^{3})\int_{0}^{t} (1+\beta
\tau)^{\xi}[\sigma^{2}+\varphi^{2}+\psi^{2}]_{x=0}d\tau.
\label{2.uhgkyi}
\end{align}

\end{lemma}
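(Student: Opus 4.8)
\textbf{Proof proposal for Lemma \ref{mainboundary2g}.}
The plan is to mirror the argument of Lemma \ref{main.resultloik0} one order higher in the time direction, just as Lemma \ref{main.result2.erer9} mirrored Lemma \ref{main.result2.9}. First I would differentiate the rewritten balance law \eqref{2.1new} in $t$ to obtain an identity for $(\CE_1^{\rm t}/\tilde n)_t+(\CH_1^{\rm t}/\tilde n)_x+\psi_{xt}\sigma_t$, with the fourth term $-\tilde v_x\,\CH_1^{\rm t}/\tilde n$ and other $\tilde n$-derivative terms on the right. The key structural manipulation is to absorb the awkward term $\psi_{xt}\sigma_t$: using $\eqref{1.16}_1$ differentiated in $t$, I would write $\psi_{xt}\sigma_t = -\sigma_t\varphi_{tt} - \tilde u\,\sigma_t\varphi_{xt} - (\sigma_t\psi_t\varphi_t)_x + \cdots$ (the direct $t$-derivative analogue of \eqref{2.2new}), and then convert $-\sigma_t\varphi_{tt}$ and $-\tilde u\,\sigma_t\varphi_{xt}$ into conservative form plus controllable remainders by multiplying the $t$-differentiated Poisson equations \eqref{2.3new} (and a variant of \eqref{2.4new} differentiated in $t$) by $e^{-v}\sigma_t$ and $e^{-\tilde v}\tilde u\,\sigma_{xt}$ respectively. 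This produces the energy density $\tfrac12 e^{-\phi}e^{-v}\sigma_t^2 + \tfrac12 e^{-v}\sigma_{xt}^2$ added to $\CE_1^{\rm t}/\tilde n$, together with a boundary flux and a remainder $\CN$ analogous to \eqref{2.6nelkw}.

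Next I would multiply the resulting balance law by the weight $W_{\varepsilon,\beta_1}$ and integrate over $\mathbb{R}_+$, getting an identity in the same shape as \eqref{2.7new}. The boundary term at $x=0$ is handled by invoking the differentiated boundary condition (the $t$-derivative of \eqref{1.19}), which expresses $\sigma_{xtt}(t,0)$ in terms of $[-e^v\psi - \tilde u(e^v-e^{\tilde v}) + u_\infty(e^{-\sigma}-1)]_t$; after substituting this one shows, exactly as in \eqref{2.8new}, that the boundary flux is bounded below by $(c-C\delta)[\varphi_t^2+\psi_t^2+\zeta_t^2+\sigma_t^2+\sigma_{xt}^2]_{x=0}$, using $u_\infty<0$ and the smallness of $\delta$. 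The weighted interior terms carrying the factor $\varepsilon\beta_1 W_{\varepsilon-1,\beta_1}$ are estimated as in \eqref{2.9new}--\eqref{2.12new}: I would use the $t$-differentiated Poisson equation to trade $\sigma_t$-mass for $\sigma_{xt}^2$ plus $\varphi_t^2$-type terms, complete squares to get a nonnegative main part, and collect the errors as $C\beta_1(\beta_1^2+\delta\beta^3)\|(\varphi_t,\psi_t,\zeta_t)\|_{\varepsilon-3,\beta_1}^2 + C\beta_1\|\sigma_{xt}\|_{\varepsilon-1,\beta_1}^2 + C\beta_1[\cdots]_{x=0}$. The remainder $\int W_{\varepsilon,\beta_1}\CN\,dx$ is controlled by $C(\beta_1^3+\delta\beta^3)\|(\varphi,\psi,\zeta,\varphi_t,\psi_t,\zeta_t)\|_{\varepsilon-3,\beta_1}^2 + C(\beta_1^2+\delta\beta^3)\|(\varphi_x,\ldots,\sigma_{xt},\varphi_{xt},\psi_{xt})\|_{\varepsilon-1,\beta_1}^2 + C(\beta_1^2+\delta\beta^3)[\sigma^2+\varphi^2+\psi^2]_{x=0}$, using Lemmata \ref{main.result4} and \ref{main.result4.2}, the Cauchy--Schwarz inequality, $\lambda\ge 4$, and $\beta_1\le\beta$. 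Collecting everything and replacing $\beta_1$ in the norm subscripts by $\beta$ (legitimate since $\beta_1\le\beta$, so $W_{\cdot,\beta_1}\le W_{\cdot,\beta}$ on the right and $W_{\cdot,\beta_1}\ge$ a $\beta$-independent multiple on the left over the relevant range), I would arrive at the differential inequality
\begin{align*}
&\frac{d}{dt}\int_{\mathbb{R}_{+}} W_{\varepsilon,\beta_{1}}\Big(\frac{\CE_1^{\rm t}}{\tilde{n}}+\frac{1}{2}e^{-\phi}e^{-v}\sigma_t^{2}+\frac{1}{2}e^{-v}\sigma_{xt}^{2}\Big)\,dx
+c[\varphi_{t}^{2}+\psi_{t}^{2}+\zeta_{t}^{2}+\sigma_{t}^{2}+\sigma_{xt}^{2}]_{x=0}\\
&\leq C(\beta_{1}+\delta\beta^{3})\|(\varphi_{x},\psi_{x},\zeta_{x},\sigma_{x},\sigma_{xt},\varphi_{xt},\psi_{xt})\|_{\varepsilon-1,\beta}^{2}
+C(\beta_{1}^{3}+\delta\beta^{3})\|(\varphi,\psi,\zeta,\varphi_{t},\psi_{t},\zeta_{t})\|_{\varepsilon-3,\beta}^{2}\\
&\quad+C(\beta_{1}+\delta\beta^{3})[\sigma^{2}+\varphi^{2}+\psi^{2}]_{x=0},
\end{align*}
valid for $0<\beta_1\le\beta$ and $\delta$ small. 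Multiplying by $(1+\beta\tau)^\xi$, integrating over $(0,t)$, and using $\frac{d}{d\tau}(1+\beta\tau)^\xi=\xi\beta(1+\beta\tau)^{\xi-1}$ to produce the $C\xi\beta\int(1+\beta\tau)^{\xi-1}\|\cdot\|_{\varepsilon,\beta}^2\,d\tau$ term, together with the norm equivalence $\CE_1^{\rm t}/\tilde n + \tfrac12 e^{-\phi}e^{-v}\sigma_t^2+\tfrac12 e^{-v}\sigma_{xt}^2 \simeq |(\varphi_t,\psi_t,\zeta_t,\sigma_t,\sigma_{xt})|^2$ (which holds since $\tilde n,\tilde T$ are bounded above and below and the perturbation is small), yields \eqref{2.uhgkyi}.

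The main obstacle I expect is the bookkeeping of the highest-order remainder $\CN$: differentiating \eqref{2.1new} in $t$ and then the Poisson-equation tricks generate terms like $e^{-v}\varphi_{xt}\sigma_t\sigma_{xtt}$, $\varphi_t\sigma_{xt}\psi_{xt}$, and $(e^{\varphi}-1-\varphi)_t\sigma_{xt}$ that nominally involve three derivatives or products of three small quantities; one must argue, as the authors do for Lemma \ref{main.result2.erer9}, that after integration by parts in $x$ (shifting the extra $x$-derivative onto a weight or onto $\tilde n$-coefficients, which are $O(\phi_b G^{-k})$) and using $\|\sigma\|_\infty, \|(\varphi,\psi,\zeta)\|_\infty \le C\,\mathcal N_{\lambda,\beta}(M)\le C\delta\beta^3$, every such term is absorbed into the dissipation with a factor $\delta\beta^3$ or $\beta_1^2$. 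A secondary technical point is that the computation is only formal above second order (it contains $\sigma_{xtt}$, $\varphi_{tt}$), so as the authors note for Lemma \ref{main.result2.erer9} one should invoke a temporal mollification and pass to the limit; I would simply inherit that remark and proceed formally, which is exactly the level at which the statement is claimed.
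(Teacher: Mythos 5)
Your proposal follows essentially the same route as the paper's proof: the paper starts from the tested $t$-differentiated system \eqref{2.27} (rather than literally differentiating \eqref{2.1new} in $t$), rewrites the coupling term $\tilde{n}\psi_{xt}\sigma_{t}$ exactly as you describe to reach \eqref{2.1tnew}, uses the $t$-differentiated boundary condition \eqref{1.19tnews} to extract the boundary dissipation, and estimates the weighted interior, $\sigma_{t}\sigma_{ttx}$, and remainder terms as in \eqref{2.10tgnew}--\eqref{2.13tgnew} before multiplying by $(1+\beta\tau)^{\xi}$ and integrating. Apart from that minor point about the starting identity, your argument is correct and matches the paper's.
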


\begin{proof}
We follow the similar steps as in deriving \eqref{2.6new} in the proof of Lemma \ref{main.resultloik0}. Then after we rewrite the fourth term
$\tilde{n}\psi_{xt}\sigma_{t}$ in \eqref{2.27}, we have the following equality:
\begin{align}
&\left[\CE_1^{\rm t}+\frac{\tilde{n}}{2}e^{-\phi}e^{-v}\sigma_{t}^{2}+\frac{\tilde{n}}{2}e^{-v}\sigma_{xt}^{2}\right]_{t}\notag\\&+\left[\CH_1^{\rm t}-\tilde{n}u\varphi_{t}\sigma_{t}
-\frac{1}{2}\tilde{n}ue^{-\phi}e^{-v}\sigma_{t}^{2}+\frac{1}{2}\tilde{n}ue^{-v}\sigma_{xt}^{2}-\tilde{n}e^{-v}\sigma_{t}\sigma_{ttx}\right]_{x}=
\CN_{4},\label{2.1tnew}
\end{align}
where
\begin{align}
\CN_{4}=&\CR_1^{\rm t}-\CD_1^{\rm t}-\frac{\tilde{n}}{2}(e^{-\phi}e^{-v})_{t}\sigma_{t}^{2}-(\tilde{n}e^{-v})_{x}\sigma_{t}\sigma_{ttx}+\frac{1}{2}\tilde{n}(e^{-v})_{t}\sigma_{xt}^{2}+
\tilde{n}(e^{-v})_{t}\sigma_{t}\sigma_{txx}\notag\\&+\frac{1}{2}(\tilde{n}ue^{-v})_{x}\sigma_{xt}^{2}-(\tilde{n}u)_{x}\varphi_{t}\sigma_{t}
-\frac{1}{2}(\tilde{n}ue^{-\phi}e^{-v})_{x}\sigma_{t}^{2}+\tilde{n}\psi_{t}v_{x}\sigma_{t}.\label{2.1tnewj}
\end{align}

Then, multiplying \eqref{2.1tnew} by $ W_{\varepsilon,\beta_{1}}$ and
integrating the resulting equation over $\mathbb{R}_{+}$,
one deduces that
\begin{eqnarray}\label{2.7tbnew}
\begin{aligned}[b]
&\frac{d}{dt}\int_{\mathbb{R}_{+}} W_{\varepsilon,\beta_{1}}\left[\CE_1^{\rm t}+\frac{\tilde{n}}{2}e^{-\phi}e^{-v}\sigma_{t}^{2}+\frac{\tilde{n}}{2}e^{-v}\sigma_{xt}^{2}\right]
dx\\&+\left[-\CH_1^{\rm t}+\tilde{n}u\varphi_{t}\sigma_{t}
+\frac{1}{2}\tilde{n}ue^{-\phi}e^{-v}\sigma_{t}^{2}-\frac{1}{2}\tilde{n}ue^{-v}\sigma_{xt}^{2}+\tilde{n}e^{-v}\sigma_{t}\sigma_{ttx}\right]_{x=0}\\[2mm]
&+\varepsilon\beta_{1}\int_{\mathbb{R}_{+}} W_{\varepsilon-1,\beta_{1}}\left[-\CH_1^{\rm t}+\tilde{n}u\varphi_{t}\sigma_{t}
+\frac{1}{2}\tilde{n}ue^{-\phi}e^{-v}\sigma_{t}^{2}-\frac{1}{2}\tilde{n}ue^{-v}\sigma_{xt}^{2}\right]dx
\\
=&-\varepsilon\beta_{1}\int_{\mathbb{R}_{+}} W_{\varepsilon-1,\beta_{1}}\tilde{n}e^{-v}\sigma_{t}\sigma_{ttx}dx+\int_{\mathbb{R}_{+}} W_{\varepsilon,\beta_{1}}\CN_4 dx.
\end{aligned}
\end{eqnarray}

By differentiating the boundary condition \eqref{1.19} in $t$, we have
\begin{eqnarray}\label{1.19tnews}
  \sigma_{xtt}(t,0)=\left[-e^{v}\psi_{t}-u(e^{v})_{t}-u_{\infty}e^{-\sigma}\sigma_{t}\right](t,0).
\end{eqnarray}
Using the boundary condition \eqref{1.19tnews} and under the condition $\eqref{1.15b}$, we have
\begin{align}
 &\Big[-\CH_1^{\rm t}+\tilde{n}u\varphi_{t}\sigma_{t}
+\frac{1}{2}\tilde{n}ue^{-\phi}e^{-v}\sigma_{t}^{2}-\frac{1}{2}\tilde{n}ue^{-v}\sigma_{xt}^{2}+\tilde{n}e^{-v}\sigma_{t}\sigma_{ttx}\Big]_{x=0}\notag\\[2mm]
=&\Big[-\frac{\tilde{n}}{2}RTu\varphi_{t}^{2}-\tilde{n}RT\varphi_{t}\psi_{t}-\frac{\tilde{n}}{2}mu\psi_{t}^{2}-R\tilde{n}\zeta_{t}\psi_{t}-\frac{R\tilde{n}
u}{2(\gamma-1)T}\zeta_{t}^{2}\notag\\[2mm]&+\frac{1}{2}\tilde{n}ue^{-\phi}e^{-v}\sigma_{t}^{2}-\tilde{n}u_{\infty}e^{-v}e^{-\sigma}\sigma_{t}^{2}
-\frac{1}{2}\tilde{n}ue^{-v}\sigma_{xt}^{2}\Big]_{x=0}
\notag\\[2mm]
\geq& c[\varphi_{t}^{2}+\psi_{t}^{2}+\zeta_{t}^{2}+\sigma_{t}^{2}+\sigma_{xt}^{2}]_{x=0}
-C(\mathcal{N}_{\lambda,\beta}(M)+\phi_{b})[\sigma_{t}^{2}+\sigma_{xt}^{2}+\varphi_{t}^{2}+\psi_{t}^{2}+\zeta_{t}^{2}]_{x=0}
\notag\\[2mm]
\geq& (c-C\delta)[\varphi_{t}^{2}+\psi_{t}^{2}+\zeta_{t}^{2}+\sigma_{t}^{2}+\sigma_{xt}^{2}]_{x=0}.
\label{2.8cdtnew}
\end{align}

The estimate of the third term in \eqref{2.7tbnew} is similar to obtaining \eqref{2.10new}. In fact, we only need to use $(\varphi_{t},\psi_{t},\zeta_{t},\sigma_{t})$
to replace $(\varphi,\psi,\zeta,\sigma)$  in all the estimates of \eqref{2.9new} and \eqref{2.10new}. Thus, as for obtaining \eqref{2.10new}, it holds that
\begin{align}
&\varepsilon\beta_{1}\int_{\mathbb{R}_{+}} W_{\varepsilon-1,\beta_{1}}\Big[-\CH_1^{\rm t}+\tilde{n}u\varphi_{t}\sigma_{t}
+\frac{1}{2}\tilde{n}ue^{-\phi}e^{-v}\sigma_{t}^{2}-\frac{1}{2}\tilde{n}ue^{-v}\sigma_{xt}^{2}\Big]dx\notag\\[2mm]
\geq&\varepsilon\beta_{1}\int_{\mathbb{R}_{+}} W_{\varepsilon-1,\beta_{1}}\Big[\frac{RT_{\infty}}{2}|u_{\infty}|(\varphi_{t}-\frac{1}{|u_{\infty}|}\psi_{t})^{2}
+\frac{(\gamma-1)RT_{\infty}}{2|u_{\infty}|}(\psi_{t}-\frac{|u_{\infty}|}{(\gamma-1)T_{\infty}}\zeta_{t})^{2}\notag\\[2mm]&
+\frac{|u_{\infty}|}{2}(\sigma_{t}+\frac{1}{|u_{\infty}|}\psi_{t})^{2}\Big]dx-C\beta_{1}(\beta_{1}^{2}+\mathcal{N}_{\lambda,\beta}(M))
\|(\varphi_{t},\psi_{t},\zeta_{t},\sigma_{t})\|_{\varepsilon-3,\beta_{1}}^{2}\notag\\[2mm]&-C\beta_{1}\|\sigma_{xt}\|_{\varepsilon-1,\beta_{1}}^{2}
-C\beta_{1}[\sigma_{t}^{2}+\sigma_{xt}^{2}]_{x=0}\notag\\[2mm]
\geq&-C\beta_{1}(\beta_{1}^{2}+\delta\beta^{3})\|(\varphi_{t},\psi_{t},\zeta_{t},\sigma_{t})\|_{\varepsilon-3,\beta_{1}}^{2}-C\beta_{1}\|\sigma_{xt}\|_{\varepsilon-1,\beta_{1}}^{2}
-C\beta_{1}[\sigma_{t}^{2}+\sigma_{xt}^{2}]_{x=0}.
\label{2.10tgnew}
\end{align}

To estimate the first term on the right-hand side of \eqref{2.7tbnew}, we apply the Schwarz inequality and  use  Lemma \ref{main.result4} and
\eqref{1.16} as follows:
\begin{eqnarray}\label{2.12nhgfgnew}
\begin{aligned}[b]
&\left|-\varepsilon\beta_{1}\int_{\mathbb{R}_{+}} W_{\varepsilon-1,\beta_{1}}\tilde{n}e^{-v}\sigma_{t}\sigma_{ttx}dx\right|\\
\leq& C\beta_{1}\|(\sigma_{t},\sigma_{ttx})\|_{\varepsilon-1,\beta_{1}}^{2}\\
\leq& C\beta_{1}\|(\varphi_{tt},\varphi_{t})\|_{\varepsilon-1,\beta_{1}}^{2}
+ C\beta_{1}[\varphi^{2}+\psi^{2}+\sigma^{2}+\varphi_{t}^{2}+\psi_{t}^{2}+\sigma_{t}^{2}]_{x=0}\\
\leq& C\beta_{1}\|(\varphi_{x},\psi_{x},\varphi_{xt},\psi_{xt},\zeta_{x},\sigma_{x})\|_{\varepsilon-1,\beta_{1}}^{2}+C\beta_{1}\phi_{b}^{2}\int_{\mathbb{R}_{+}} W_{\varepsilon-1,\beta_{1}}G(x)^{-2}(\varphi^{2}+\psi^{2}+\zeta^{2})dx\\
&+ C\beta_{1}[\varphi^{2}+\psi^{2}+\sigma^{2}+\varphi_{t}^{2}+\psi_{t}^{2}+\sigma_{t}^{2}]_{x=0}\\
\leq& C\beta_{1}\|(\varphi_{x},\psi_{x},\varphi_{xt},\psi_{xt},\zeta_{x},\sigma_{x})\|_{\varepsilon-1,\beta_{1}}^{2}+C\beta_{1}^{5}\|(\varphi,\psi,\zeta)\|_{\varepsilon-3,\beta_{1}}^{2}\\
&+ C\beta_{1}[\varphi^{2}+\psi^{2}+\sigma^{2}+\varphi_{t}^{2}+\psi_{t}^{2}+\sigma_{t}^{2}]_{x=0}
\end{aligned}
\end{eqnarray}
Similarly for deriving  \eqref{2.13new} and \eqref{2.12nhgfgnew}, we can treat the last term of \eqref{2.7tbnew} as follows:
\begin{eqnarray}\label{2.13tgnew}
\begin{aligned}[b]
\int_{\mathbb{R}_{+}} W_{\varepsilon,\beta_{1}}\CN_4 dx  \leq&
C(\beta_{1}^{3}+\delta\beta^{3})\|(\varphi,\psi,\zeta,\varphi_{t},\psi_{t},\zeta_{t})\|_{\varepsilon-3,\beta_{1}}^{2}
\\&+C(\beta_{1}^{2}+\delta\beta^{3})\|(\psi_{x},\zeta_{x},\sigma_{x},\sigma_{xt},\varphi_{xt},\psi_{xt},\varphi_{x})\|_{\varepsilon-1,\beta_{1}}^{2}
\\&+C(\beta_{1}^{2}+\delta\beta^{3})[\sigma^{2}+\varphi^{2}+\psi^{2}+\sigma_{t}^{2}+\varphi_{t}^{2}+\psi_{t}^{2}]_{x=0},
\end{aligned}
\end{eqnarray}
where we repeatedly use Lemma \ref{main.result4} and
\eqref{1.16}.

Substituting \eqref{2.8cdtnew}-\eqref{2.13tgnew} into \eqref{2.7tbnew}, we have
\begin{align}
&\frac{d}{dt}\int_{\mathbb{R}_{+}} W_{\varepsilon,\beta_{1}}\left[\CE_1^{\rm t}+\frac{\tilde{n}}{2}e^{-\phi}e^{-v}\sigma_{t}^{2}+\frac{\tilde{n}}{2}e^{-v}\sigma_{xt}^{2}\right]
dx+c[\varphi_{t}^{2}+\psi_{t}^{2}+\zeta_{t}^{2}+\sigma_{t}^{2}+\sigma_{xt}^{2}]_{x=0}\notag\\
\leq&
C(\beta_{1}^{3}+\delta\beta^{3})\|(\varphi,\psi,\zeta,\varphi_{t},\psi_{t},\zeta_{t})\|_{\varepsilon-3,\beta}^{2}
+C(\beta_{1}+\delta\beta^{3})\|(\varphi_{x},\psi_{x},\zeta_{x},\sigma_{x},\sigma_{xt},\varphi_{xt},\psi_{xt})\|_{\varepsilon-1,\beta}^{2}\notag
\\&+C(\beta_{1}+\delta\beta^{3})[\sigma^{2}+\varphi^{2}+\psi^{2}]_{x=0},
\label{2.14klnew}
\end{align}
provided that $0<\beta_{1}\leq\beta$ and $\delta$ are sufficiently small, where $\CE_1^{\rm t}$ is defined in \eqref{1}. Furthermore, multiplying \eqref{2.14klnew} by $(1+\beta \tau)^{\xi}$ and integrating the resulting inequality
over $(0,t)$ give the desired estimate \eqref{2.uhgkyi}.  This hence completes the proof of Lemma \ref{mainboundary2g}.
\end{proof}

\begin{proof}[Proof of Proposition \ref{ste.pro1.1}]
 Now, following Lemmata $\ref{main.result2.9}$--$\ref{mainboundary2g}$  above, we are ready to
prove Proposition \ref{ste.pro1.1}. Firstly, by setting $0<\beta_{1}\ll\theta_{0}\beta\ll\beta\ll\delta\ll\theta_{0}\ll1$ roughly speaking(see \cite{boundaawa} for details), we can show  that
multiplication of \eqref{2.10} by $\theta_{0}$ and addition to \eqref{2.uykjui}, and multiplication of \eqref{2.35} by $\theta_{0}$ and addition to \eqref{2.uhgkyi}
respectively lead to the following two estimates
\begin{align}
 &(1+\beta t)^{\xi}\|(\varphi,\psi,\zeta,\sigma)(t)\|_{\varepsilon,\beta,1}^{2}\notag\\
&\quad +\int_{0}^{t} (1+\beta
 \tau)^{\xi}\left\{\beta^{3}\|(\varphi,\psi,\zeta)\|_{\varepsilon-3,\beta}^{2}
 +\beta\|(\varphi_{x},\psi_{x},\zeta_{x},\sigma_{x})\|_{\varepsilon-1,\beta}^{2}\right\}d\tau\notag\\&+\int_{0}^{t} (1+\beta
\tau)^{\xi}[\varphi^{2}+\psi^{2}+\zeta^{2}+\sigma^{2}+\sigma_{x}^{2}]_{x=0}d\tau\notag\\
 \leq &C\|(\varphi_{0},\psi_{0},\zeta_{0},\sigma_{0})\|_{\varepsilon,\beta,1}^{2}
 +C\xi\beta\int_{0}^{t} (1+\beta
 \tau)^{\xi-1}\|(\varphi,\psi,\zeta,\sigma)\|_{\varepsilon,\beta,1}^{2}
 d\tau
\label{2.10nm}
 \end{align}
and
\begin{align}
 &(1+\beta t)^{\xi}\|(\varphi_{t},\psi_{t},\zeta_{t},\sigma_{t})(t)\|_{\varepsilon,\beta,1}^{2}\notag\\
&\quad +\int_{0}^{t} (1+\beta
 \tau)^{\xi}\left\{\beta^{3}\|(\varphi_{t},\psi_{t},\zeta_{t})\|_{\varepsilon-3,\beta}^{2}
 +\beta\|(\varphi_{tx},\psi_{tx},\zeta_{tx},\sigma_{tx})\|_{\varepsilon-1,\beta}^{2}\right\}d\tau\notag\\&+\int_{0}^{t} (1+\beta
\tau)^{\xi}\left[\varphi_{t}^{2}+\psi_{t}^{2}+\zeta_{t}^{2}+\sigma_{t}^{2}+\sigma_{xt}^{2}\right]_{x=0}d\tau\notag\\
 \leq &C\|(\varphi_{0t},\psi_{0t},\zeta_{0t},\sigma_{0t})\|_{\varepsilon,\beta,1}^{2}
 +C\xi\beta\int_{0}^{t} (1+\beta
 \tau)^{\xi-1}\|(\varphi_{t},\psi_{t},\zeta_{t},\sigma_{t})\|_{\varepsilon,\beta,1}^{2}d\tau\notag\\&+C\int_{0}^{t} (1+\beta
 \tau)^{\xi}\left\{\beta^{3}\|(\varphi,\psi,\zeta,\varphi_{xx},\psi_{xx},\zeta_{xx})\|_{\varepsilon-3,\beta}^{2}
 +\beta\|(\varphi_{x},\psi_{x},\zeta_{x},\sigma_{x})\|_{\varepsilon-1,\beta}^{2}\right\}d\tau\notag\\&+C\int_{0}^{t} (1+\beta
\tau)^{\xi}[\varphi^{2}+\psi^{2}+\sigma^{2}]_{x=0}d\tau.
\label{2.35nm}
 \end{align}
Then multiply \eqref{2.35nm} by a suitably small positive constant and add to \eqref{2.10nm}. As a result, we have
\begin{align}
 &(1+\beta t)^{\xi}\|(\varphi,\psi,\zeta,\sigma,\varphi_{t},\psi_{t},\zeta_{t},\sigma_{t})(t)\|_{\varepsilon,\beta,1}^{2}\notag\\
&\quad +\int_{0}^{t} (1+\beta
 \tau)^{\xi}\left\{\beta^{3}\|(\varphi,\psi,\zeta,\varphi_{t},\psi_{t},\zeta_{t})\|_{\varepsilon-3,\beta}^{2}
 +\beta\|(\varphi_{x},\psi_{x},\zeta_{x},\sigma_{x},\varphi_{tx},\psi_{tx},\zeta_{tx},\sigma_{tx})\|_{\varepsilon-1,\beta}^{2}\right\}d\tau\notag\\
 \leq &C[\|(\varphi_{0},\psi_{0},\zeta_{0},\sigma_{0},\varphi_{0t},\psi_{0t},\zeta_{0t},\sigma_{0t})\|_{\varepsilon,\beta,1}^{2}
 +C\xi\beta\int_{0}^{t} (1+\beta
 \tau)^{\xi-1}\|(\varphi,\psi,\zeta,\sigma,\varphi_{t},\psi_{t},\zeta_{t},\sigma_{t})\|_{\varepsilon,\beta,1}^{2}d\tau\notag\\&+C\beta^{3}\int_{0}^{t} (1+\beta
 \tau)^{\xi}\|(\varphi_{xx},\psi_{xx},\zeta_{xx})\|_{\varepsilon-3,\beta}^{2}
 d\tau.
\label{2.35nmkjm}
 \end{align}
Lastly,  applying Lemma \ref{main.result4}, Lemma \ref{main.result4.3} and Lemma \ref{main.result4.4},
and taking $\delta>0$ sufficiently small, one concludes that
\begin{align}
 &(1+\beta t)^{\xi}(\|(\varphi,\psi,\zeta)(t)\|_{\varepsilon,\beta,2}^{2}+\|\sigma(t)\|_{\varepsilon,\beta,4}^{2})
\notag\\[2mm]&+\beta^{3}\int_{0}^{t}(1+\beta
\tau)^{\xi}(\|(\varphi,\psi,\zeta)(\tau)\|_{\varepsilon-3,\beta,2}^{2}+\|\sigma(\tau)\|_{\varepsilon-3,\beta,4}^{2})d\tau\notag\\[2mm]
\leq& C(\|(\varphi_{0},\psi_{0},\zeta_{0})\|_{\varepsilon,\beta,2}^{2}+r_{0}^{2})
+C\xi\beta\int_{0}^{t} (1+\beta
\tau)^{\xi-1}(\|(\varphi,\psi,\zeta)(\tau)\|_{\varepsilon,\beta,2}^{2}+\|\sigma(\tau)\|_{\varepsilon,\beta,4}^{2})d\tau.
\label{2.54}
\end{align}
In terms of \eqref{2.54}, applying an induction argument similar as \cite{SKawashima} and \cite{MNishikawa} with the choice of
  $\xi=(\lambda-\varepsilon)/3+\kappa$ for an arbitrary positive constant $\kappa$
and combining the elliptic estimates in Lemma \ref{main.result4} yield that
%\begin{eqnarray}
 \begin{align}
 &(1+\beta
 t)^{(\lambda-\varepsilon)/3+\kappa}(\|(\varphi,\psi,\zeta)(\tau)\|_{\varepsilon,\beta,2}^{2}+\|\sigma(\tau)\|_{\varepsilon,\beta,4}^{2})\notag\\[2mm]
 &
\quad+\beta^{3}\int_{0}^{t} (1+\beta
\tau)^{(\lambda-\varepsilon)/3+\kappa}
(\|(\varphi,\psi,\zeta)(\tau)\|_{\varepsilon-3,\beta,2}^{2}+\|\sigma(\tau)\|_{\varepsilon-3,\beta,4}^{2})d\tau
\notag\\[2mm]
\leq& C(1+\beta
 t)^{\kappa}(\|(\varphi_{0},\psi_{0},\zeta_{0})\|_{\lambda,\beta,2}^{2}+r_{0}^{2}),\notag
\end{align}
which proves the desired estimate \eqref{prop2.1r1} under the conditions  \eqref{b}, \eqref{c} and \eqref{d}.
Then this completes the proof of Proposition \ref{ste.pro1.1}.
\end{proof}

\section{Energy estimates for the nondegenerate case }
The aim of this section is to prove the asymptotic stability of the sheath to \eqref{1.1} under the nondegenerate condition \eqref{1.15a}. As we have seen in
the previous section concerning the degenerate problem, the essential difference
between the Dirichlet problem \cite{SIAm} and the time-evolving Neumann problem resides
in the treatment of boundary terms. This is also the case for the nondegenerate problem.
Since the derivation of a priori estimates with the Dirichlet condition for the nondegenerate problem are given in depth in \cite{SIAm}, we describe only central ideas of the proofs in this section.

\begin{proposition}\label{ste.pro2}
Let the same conditions on $T_{\infty}$, $u_{\infty}$ and
$\lambda$ as in Theorem \ref{1.2theorem} hold.

\medskip
(i) Let $(\varphi,\psi,\zeta,\sigma)$ be a solution to
\eqref{1.16}-\eqref{1.19} which satisfies
$$
(e^{\lambda
x/2}\varphi,e^{\lambda x/2}\psi,e^{\lambda x/2}\zeta,e^{\lambda
x/2}\sigma)\in (\mathscr{X}_{2}([0,M]))^{3} \times
\mathscr{X}_{2}^2([0,M])
$$
for $M>0$. Then, there exist  constants $\delta>0$ and $C>0$ independent
of $M$ such that if the following conditions
\begin{center}
$\alpha>0$, $\beta\in(0,\lambda]$, and
$\beta+(|\phi_{b}|+\mathcal {N}_{\lambda}(M)+\alpha)/\beta \leq
\delta$
\end{center}
are satisfied, where
\begin{eqnarray*}%\label{3.xc1}
 \begin{aligned}[b]
 \mathcal {N}_{\lambda}(M):=\sup_{0\leq t\leq
M}(\|(e^{\lambda x/2}\varphi, e^{\lambda x/2}\psi, e^{\lambda x/2}\zeta)(t)\|_{H^{2}}+|\sigma_{x}(t,0)|),
\end{aligned}
\end{eqnarray*}
then it holds for any $t\in[0,M]$ that
%\begin{equation}
\begin{align}
\|(e^{\beta x/2}\varphi,e^{\beta x/2}\psi,e^{\beta x/2}\zeta)(t)\|_{H^{2}}^{2}&+\|e^{\beta
x/2}\sigma(t)\|_{H^{4}}^{2}\notag\\[2mm]
&\leq C (\|(e^{\lambda
x/2}\varphi_{0},e^{\lambda
x/2}\psi_{0},e^{\lambda
x/2}\zeta_{0})\|_{H^{2}}^{2}+r_{0}^{2})e^{-\alpha t}.\label{3.1}
\end{align}
%\end{equation}

\medskip
(ii) Let $(\varphi,\psi,\zeta,\sigma)$ be a solution to
\eqref{1.16}-\eqref{1.19} over $[0,M]$ for $M>0$. Then, for any $\varepsilon\in(0,\lambda]$,
there exist constants $\delta>0$ and $C>0$ independent of $M$ such that if all the following conditions
\begin{equation*}%\label{3.2}
%\begin{aligned}[b]
((1+\beta x)^{\lambda/2}\varphi,(1+\beta x)^{\lambda/2}\psi,(1+\beta
x)^{\lambda/2}\zeta,(1+\beta x)^{\lambda/2}\sigma)\in
(\mathscr{X}_{2}([0,M]))^{3} \times
\mathscr{X}_{2}^2([0,M])
%\end{aligned}
\end{equation*}
and
\begin{eqnarray}\label{3.2b}
 \begin{aligned}[b]
\beta+(|\phi_{b}|+\mathcal {N}_{\lambda,\beta}(M))/\beta \leq
\delta,\quad \beta>0
\end{aligned}
\end{eqnarray}
are satisfied, then it holds for any $t\in[0,M]$ that
\begin{eqnarray}\label{3.3}
 \begin{aligned}[b]
\|(\varphi,\psi,\zeta)(t)\|_{\varepsilon,\beta,2}^{2}+\|\sigma(t)\|_{\varepsilon,\beta,4}^{2}\leq
C (\|(\varphi_{0},\psi_{0},\zeta_{0})\|_{\lambda,\beta,2}^{2}+r_{0}^{2})(1+\beta
t)^{-(\lambda-\varepsilon)}.
\end{aligned}
\end{eqnarray}
\end{proposition}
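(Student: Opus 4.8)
The plan is to mirror the architecture of Section~2 used to prove Proposition~\ref{ste.pro1.1}, replacing the algebraic weight by the exponential weight $e^{\beta x}$ for part~(i) and retaining an algebraic weight $W_{\varepsilon,\beta}$ for part~(ii), while now exploiting the \emph{exponential} spatial decay \eqref{1.14} of the sheath (available under the nondegenerate condition \eqref{1.15a}) in place of the polynomial bound \eqref{1.14d}. As in \eqref{2.0}--\eqref{2.5}, I would first symmetrize the perturbation system, take the inner products of the symmetrized system and of its $x$-derivative with $\tilde n(\varphi,\psi,\zeta)$ and $\tilde n(\varphi_x,\psi_x,\zeta_x)$ to obtain the identities \eqref{2.1} and \eqref{2.2}, multiply the first by $e^{-\tilde\phi}$, add, and eliminate the Poisson coupling $\tilde n\psi_x\sigma$ exactly as in \eqref{2.3p1}--\eqref{2.4} using the Taylor form \eqref{2.3} of \eqref{1.17} and $\eqref{1.16}_{1}$, producing the combined identity \eqref{2.5}; the $t$-differentiated analogues \eqref{2.27}--\eqref{2.31} are derived in the same way.

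The zeroth/first-order estimate (the analogue of Lemma~\ref{main.result2.9}) is then obtained by multiplying \eqref{2.5} by $e^{\beta x}$ (resp.\ $W_{\varepsilon,\beta}$) and integrating over $\mathbb{R}_{+}$. The decisive dissipation comes from the weight derivative acting on the energy flux; for $e^{\beta x}$ this produces $\beta\int e^{\beta x}\big((-\CH_1^{\rm x})+\CL_1\big)dx+\int e^{\beta x}\CL_2\,dx$. The first-order flux is handled by the computation \eqref{2.10a}, which shows $-\CH_1^{\rm x}$ is positive definite in $(\varphi_x,\psi_x,\zeta_x)$ under \eqref{1.15a}. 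For the zeroth-order part, as in \eqref{2.12}--\eqref{2.22} I test the Taylor form \eqref{2.3} of the Poisson equation against $-\beta\sigma e^{\beta x}$ (resp.\ $-\varepsilon\beta\sigma W_{\varepsilon-1,\beta}$) to trade $\sigma$-mass for $\sigma_x^2$ and $\varphi^2$ plus a boundary term $[\sigma^2+\sigma_x^2]_{x=0}$, and the residual quadratic form in $(\varphi,\psi,\zeta)$ must be shown positive definite. This is far softer than the degenerate $Q(x)$ of \eqref{2.1gh}: all coefficients of the lower-order corrections in $\tilde n_x,\tilde u_x,\tilde T_x,\tilde\phi_x$ are $O(|\phi_b|e^{-cx})$ by \eqref{1.14}, so after completing squares as in the last lines of \eqref{2.10new} the leading matrix is a fixed positive definite matrix --- coercive precisely because $u_\infty^2>(\gamma RT_\infty+1)/m$ strictly --- perturbed only by $O(\delta)$. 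The outcome is an inequality of the same shape as \eqref{2.10}, with interior dissipation of order $\beta$ on $(\varphi,\psi,\zeta)$ and $(\varphi_x,\psi_x,\zeta_x,\sigma_x)$ (no loss of weight powers, in contrast with $\beta^3$ in the degenerate case) but still carrying an uncontrolled boundary term in $\sigma,\sigma_x$ on the right.

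The remaining point --- and, as the introduction emphasizes, the only genuinely new one relative to the Dirichlet analysis of \cite{SIAm} --- is to recover the boundary contribution of $\sigma$ and $\sigma_x$ at $x=0$. Following the analogue of Lemma~\ref{main.resultloik0}, I rewrite the coupling $\psi_x\sigma$ via $\eqref{1.16}_{1}$ and the $t$-differentiated Poisson equation \eqref{2.3new}, exactly as in \eqref{2.1new}--\eqref{2.6new}, so that the time-differentiated boundary condition \eqref{1.19} turns the boundary flux into $c[\varphi^2+\psi^2+\zeta^2+\sigma^2+\sigma_x^2]_{x=0}$ with the correct sign (cf.\ \eqref{2.8new}); multiplying this identity by a \emph{slower} weight $e^{\beta_1 x}$ (resp.\ $W_{\varepsilon,\beta_1}$) with $0<\beta_1\ll\beta$ and adding a small multiple of the first estimate closes the boundary term, since the two estimates carry interior dissipation and boundary terms on opposite sides. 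The $t$-differentiated versions (analogues of Lemmas~\ref{main.result2.erer9} and \ref{mainboundary2g}) go through identically, giving control of $(\varphi_t,\psi_t,\zeta_t,\sigma_t,\sigma_{xt})$ and their boundary traces. Combining everything with the elliptic estimates of Lemmas~\ref{main.result4}, \ref{main.result4.3} and \ref{main.result4.4} to recover $(\varphi_{xx},\psi_{xx},\zeta_{xx})$ and $\sigma$ up to four $x$-derivatives, and absorbing $O(\delta)$ remainders, I obtain a closed inequality: in case~(i) of the form $\frac{d}{dt}E+c\beta E\le 0$, since for $e^{\beta x}$ the dissipation controls the full weighted energy, which yields \eqref{3.1} for any $\alpha\in(0,c\beta]$; in case~(ii) of the form \eqref{2.54} but with dissipation of order $\beta$ (loss of a single weight power, not three), so that the time-weight induction of \cite{SKawashima,MNishikawa} with $\xi=\lambda-\varepsilon+\kappa$ delivers the sharper rate $(1+\beta t)^{-(\lambda-\varepsilon)}$ in \eqref{3.3}.

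I expect the main obstacle to be the bookkeeping of these boundary terms: fixing the weight hierarchy $0<\beta_1\ll\theta_0\beta\ll\beta\ll\delta\ll\theta_0\ll1$ (as in the proof of Proposition~\ref{ste.pro1.1}) so that every residual boundary contribution encountered along the way --- in particular those from the cross terms $e^{-v}\sigma\sigma_{xt}$ and from the $\CN_2$- and $\CN_4$-type remainders --- is dominated by the boundary dissipation extracted from \eqref{1.19}. By contrast the positive-definiteness of the zeroth-order quadratic form, the crux of the degenerate problem, is here essentially immediate from the strict Bohm criterion \eqref{1.15a} together with the exponential decay \eqref{1.14}.
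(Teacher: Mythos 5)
Your proposal is correct and follows essentially the same route as the paper: the same combined identity \eqref{2.5}/\eqref{2.7}, the same device of testing the Taylor-expanded Poisson equation against $-\sigma$ times the weight to convert the bad $\sigma^2$ term in $I_1$ into $\sigma_x^2$ dissipation plus boundary traces (cf.\ \eqref{3.1phgl3}--\eqref{3.13de}), the same slower-weight companion estimate recovering the boundary dissipation from the time-differentiated condition \eqref{1.19}, and the same weight hierarchy and time-weight induction with $\xi=\lambda-\varepsilon+\kappa$. The paper's Section~3 carries this out exactly as you describe, including the observation that the nondegenerate dissipation loses only one weight power (order $\beta$ rather than $\beta^3$), which is what yields the improved rate $(1+\beta t)^{-(\lambda-\varepsilon)}$.
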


 Since it is easier to treat the {\it a priori} estimate for the exponential weight than that for the algebraic weight, we would only prove  Proposition \ref{ste.pro2} (ii) in the case of algebraic weights for brevity.  Similarly to the degenerate problem, the proof of Proposition \ref{ste.pro2} (ii) consists in
deriving estimates Lemmata \ref{main.result2.987}, \ref{main.newloik0},  \ref{main.result2.987as},  \ref{mainnewjry2g} corresponding to Lemmata   \ref{main.result2.9}, \ref{main.resultloik0}, \ref{main.result2.erer9},  \ref{mainboundary2g}. Then,
 Proposition \ref{ste.pro2} is proved at the end of this section.

\begin{lemma}\label{main.result2.987}
Under the same conditions as in Proposition \ref{ste.pro2} (ii), for any $\varepsilon\in(0,\lambda]$, there exist constants $\delta>0$ and $C>0$ independent of $M$ such that
it holds for any $t\in[0,M]$ and $\xi\geq 0$ that
\begin{eqnarray}
&&(1+\beta t)^{\xi}\|(\varphi,\psi,\zeta)(t)\|_{\varepsilon,\beta,1}^{2}
+\beta\int_{0}^{t} (1+\beta
\tau)^{\xi} (\|(\varphi,\psi,\zeta)(\tau)\|_{\varepsilon-1,\beta,1}^{2}
+\|\sigma_{x}(\tau)\|_{\varepsilon-1,\beta}^{2})d\tau\notag\\[2mm]
&&\leq C\|(\varphi_{0},\psi_{0},\zeta_{0})\|_{\varepsilon,\beta,1}^{2}
+C\xi\beta\int_{0}^{t} (1+\beta
\tau)^{\xi-1}\|(\varphi,\psi,\zeta)(\tau)\|_{\varepsilon,\beta,1}^{2}d\tau\notag\\[2mm]
&&
+C\int_{0}^{t}(1+\beta
\tau)^{\xi}[\sigma^{2}+\sigma_{x}^{2}]_{x=0}d\tau.\label{3.9}
\end{eqnarray}
\end{lemma}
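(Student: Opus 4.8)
The plan is to mirror the proof of Lemma~\ref{main.result2.9}, exploiting the crucial simplification that in the nondegenerate case the sheath decays exponentially by \eqref{1.14}, so every term carrying a factor of $\tilde n_x,\tilde u_x,\tilde T_x$ or $\tilde\phi_x$ is bounded by $C|\phi_b|e^{-cx}$ and the zeroth order dissipation is controlled by the single power of $\beta$ produced by the weight derivative rather than by $\beta^{3}$. First I would reuse the energy identities \eqref{2.1} and \eqref{2.2}, combine \eqref{2.1}$\times e^{-\tilde\phi}$ with \eqref{2.4} exactly as in the derivation of \eqref{2.5}, multiply by $W_{\varepsilon,\beta}$ and integrate over $\R_{+}$ to obtain the analogue of \eqref{2.7} with the same $\CE_0,\CE_1^{\rm x},\CH_0,\CH_1^{\rm x},\CD_0,\CR_0,\CR_1^{\rm x}$ and the two critical integrals $I_1=\int_{\R_{+}}\varepsilon\beta W_{\varepsilon-1,\beta}\CL_1\,dx$ and $I_2=\int_{\R_{+}}W_{\varepsilon,\beta}\CL_2\,dx$.

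Next I would dispose of the routine terms. The term $\int_{\R_{+}}\varepsilon\beta W_{\varepsilon-1,\beta}(-\CH_1^{\rm x})\,dx$ yields $c\beta\|(\varphi_x,\psi_x,\zeta_x)\|_{\varepsilon-1,\beta}^{2}$ after decomposing $u=\psi+(\tilde u-u_\infty)+u_\infty$ and $T=\zeta+(\tilde T-T_\infty)+T_\infty$ and using $u_\infty<0$ with the smallness of $\delta$, as in \eqref{2.10a}--\eqref{2.11a}; the boundary contributions are bounded below by $(c-C\delta)[\varphi^{2}+\psi^{2}+\zeta^{2}+\varphi_x^{2}+\psi_x^{2}+\zeta_x^{2}]_{x=0}-C[\sigma^{2}]_{x=0}$ as in \eqref{2.11}--\eqref{2.11b}. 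Since every summand of $\CL_2$ contains a derivative of the stationary solution, $|I_2|\le C|\phi_b|\int_{\R_{+}}W_{\varepsilon,\beta}e^{-cx}(\varphi^{2}+\psi^{2}+\zeta^{2}+\sigma^{2})\,dx\le C|\phi_b|\|(\varphi,\psi,\zeta,\sigma)\|_{\varepsilon-1,\beta}^{2}$ by the elementary bound $(1+\beta x)^{\varepsilon}e^{-cx}\le C(1+\beta x)^{\varepsilon-1}$; invoking the elliptic estimate of Lemma~\ref{main.result4} to control $\|\sigma\|_{\varepsilon-1,\beta}$ by $\|\sigma_x\|_{\varepsilon-1,\beta}$, $\|\varphi\|_{\varepsilon-1,\beta}$ and the boundary, and then $|\phi_b|\le\delta\beta$ from \eqref{3.2b}, this is absorbed. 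The nonlinear term $\int_{\R_{+}}W_{\varepsilon,\beta}\CN_1\,dx$ is treated as in \eqref{2.25}: after integrating by parts to move derivatives off the squared factors and using $\|(\varphi,\psi,\zeta)\|_{\infty}\le C\mathcal{N}_{\lambda,\beta}(M)\le C\delta\beta$ from Sobolev embedding and \eqref{3.2b}, it is $\le C\delta\beta\big(\|(\varphi,\psi,\zeta)\|_{\varepsilon-1,\beta,1}^{2}+\|\sigma_x\|_{\varepsilon-1,\beta}^{2}\big)+C\delta\beta[\sigma^{2}+\sigma_x^{2}]_{x=0}$.

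The heart of the matter, and the step I expect to be the main obstacle, is the coercivity of $I_1+I_2$ — the analogue of the positive definiteness of the quadratic form $Q(x)$ in the degenerate case, only now with constant coefficients. Using \eqref{1.14} to replace $(\tilde n,\tilde u,\tilde T,\tilde\phi)$ by their limits $(1,u_\infty,T_\infty,0)$ up to $O(|\phi_b|e^{-cx})$ errors (the remaining $\psi$ and $\zeta$ corrections in the coefficients producing cubic, $\CN_1$-type, absorbable terms), $\CL_1$ reduces to the constant-coefficient form
\[
Q_0(\varphi,\psi,\zeta,\sigma)=-\tfrac12(RT_\infty+1)u_\infty\varphi^{2}-RT_\infty\varphi\psi-\tfrac12 m u_\infty\psi^{2}-R\zeta\psi-\tfrac{R u_\infty}{2(\gamma-1)T_\infty}\zeta^{2}+\sigma\psi .
\]
To handle the indefinite cross term $\sigma\psi$ I would, as in the derivation of \eqref{2.16}, multiply the Taylor-expanded Poisson equation \eqref{2.3} (with $\tilde n\to1$, $e^{-\tilde\phi}\to1$) by $-\varepsilon\beta\sigma W_{\varepsilon-1,\beta}$ and integrate, producing $\varepsilon\beta\int_{\R_{+}} W_{\varepsilon-1,\beta}(\sigma_x^{2}+\sigma^{2})\,dx$ bounded by $\varepsilon\beta\int_{\R_{+}} W_{\varepsilon-1,\beta}\varphi^{2}\,dx$, an absorbable $O(\beta^{2})$ remainder, and $C\beta[\sigma^{2}+\sigma_x^{2}]_{x=0}$; adding a suitable multiple of this identity to $I_1$ turns $Q_0$ into a quadratic form in $(\varphi,\psi,\zeta,\sigma,\sigma_x)$ whose positive definiteness is a finite computation that reduces precisely to the strict inequality $mu_\infty^{2}>\gamma RT_\infty+1$, i.e.\ the nondegenerate Bohm criterion \eqref{1.15a}. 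In contrast with the degenerate case this form has constant coefficients, so positivity is uniform in $x$ and no delicate discriminant analysis as in \eqref{2.19}--\eqref{2.20} is needed; the outcome is $I_1+I_2\ge c\beta\|(\varphi,\psi,\zeta,\sigma_x)\|_{\varepsilon-1,\beta}^{2}-C\beta[\sigma^{2}+\sigma_x^{2}]_{x=0}$.

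Finally, collecting these estimates in the weighted energy identity gives, for $\delta$ small,
\[
\frac{d}{dt}\int_{\R_{+}}W_{\varepsilon,\beta}\bigl(e^{-\tilde\phi}\CE_0+\CE_1^{\rm x}+\tfrac12\tilde n^{2}\varphi^{2}\bigr)dx+c\beta\|(\varphi,\psi,\zeta)\|_{\varepsilon-1,\beta,1}^{2}+c\beta\|\sigma_x\|_{\varepsilon-1,\beta}^{2}\le C[\sigma^{2}+\sigma_x^{2}]_{x=0},
\]
where the energy functional is equivalent to $\|(\varphi,\psi,\zeta)(t)\|_{\varepsilon,\beta,1}^{2}$ by \eqref{1.14} and the smallness of the perturbation. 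Multiplying by $(1+\beta\tau)^{\xi}$, integrating over $(0,t)$, and transferring the factor $\xi\beta(1+\beta\tau)^{\xi-1}$ coming from $\tfrac{d}{d\tau}[(1+\beta\tau)^{\xi}\mathcal E]$ to the right-hand side exactly as at the end of the proof of Lemma~\ref{main.result2.9} yields \eqref{3.9}.
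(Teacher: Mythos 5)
Your proposal is correct and follows essentially the same route as the paper's proof: reuse of the identity \eqref{2.7}, exponential decay \eqref{1.14} turning $I_2$ and the stationary-coefficient corrections into absorbable $O(|\phi_b|)\le C\delta\beta$ errors, the Poisson equation \eqref{2.3} tested against $-\varepsilon\beta\sigma W_{\varepsilon-1,\beta}$ to trade the $\sigma\psi$ (equivalently $\sigma^{2}$) term for $\sigma_x^{2}$ dissipation plus a $\varphi^{2}$ loss, and positive definiteness of the resulting constant-coefficient form reducing exactly to $mu_\infty^{2}>\gamma RT_\infty+1$, followed by the $(1+\beta\tau)^{\xi}$ weighting in time. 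This is precisely the paper's argument (its \eqref{3.12}--\eqref{3.15}), so no further comment is needed.
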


 \begin{proof}
 As in the proof of Lemma \ref{main.result2.9}, one can repeat the same procedure to obtain the identity \eqref{2.7}. It remains to re-estimate each term in \eqref{2.7}. First of all, one can still show \eqref{2.10a} in the same way so that  \eqref{2.11a} holds true. For the boundary terms on the left-hand side of \eqref{2.7}, the non-negativity estimates \eqref{2.11} and \eqref{2.11b} are also satisfied. Only the slight differences occur to estimates on $I_1$, $I_2$ and the right-hand term of $\eqref{2.7}$; it is indeed much easier to make estimates in the non-degenerate case than in the degenerate case considered before. In fact, for $I_1$, it holds that
 \begin{align}
 I_{1} &\geq\int_{\mathbb{R}_{+}}\varepsilon\beta
W_{\varepsilon-1,\beta}\Big\{
\frac{1}{2}(RT_{\infty}+1)|u_{\infty}|\varphi^{2}-RT_{\infty}\varphi\psi+\frac{1}{2}m|u_{\infty}|\psi^{2}\notag\\[2mm]
&\qquad\qquad\qquad\qquad+\frac{R
|u_{\infty}|}{2(\gamma-1)T_{\infty}}\zeta^{2}-R\zeta\psi+\sigma\psi\Big\}dx\notag\\[2mm]
&\quad
-C(\mathcal {N}_{\lambda,\beta}(M)+\phi_{b})\int_{\mathbb{R}_{+}}\varepsilon\beta
W_{\varepsilon-1,\beta}(\varphi^{2}+\psi^{2}+\zeta^{2})dx.
\label{3.12}
\end{align}
Furthermore, using the Cauchy-Schwarz inequality
$\sigma\psi\geq-(\frac{|u_{\infty}|}{2}\sigma^{2}+\frac{1}{2|u_{\infty}|}\psi^{2})$,
it follows from \eqref{3.12} that
\begin{eqnarray}\label{3.13}
 \begin{aligned}[b]I_{1} \geq&\int_{\mathbb{R}_{+}}\varepsilon\beta
W_{\varepsilon-1,\beta}\Big\{
\frac{|u_{\infty}|}{2}(RT_{\infty}+1)\varphi^{2}-RT_{\infty}\varphi\psi+\frac{m|u_{\infty}|^{2}-1}{2|u_{\infty}|}\psi^{2}\\[2mm]
&\qquad\qquad\qquad\qquad+\frac{R
|u_{\infty}|\zeta^{2}}{2(\gamma-1)T_{\infty}}-R\zeta\psi-\frac{|u_{\infty}|}{2}\sigma^{2}\Big\}dx\\[2mm] &
-C\delta\beta^{2}\int_{\mathbb{R}_{+}}
W_{\varepsilon-1,\beta}(\varphi^{2}+\psi^{2}+\zeta^{2})dx.
\end{aligned}
\end{eqnarray}
To deal with the bad term $-\int_{\mathbb{R}_{+}}\varepsilon\beta
W_{\varepsilon-1,\beta}\frac{|u_{\infty}|}{2}\sigma^{2} dx$ on the right-hand side of \eqref{3.13}, we first rewrite $\eqref{2.3}$ as the form of
\begin{eqnarray}\label{3.1phgl3}
 \begin{aligned}
\sigma_{xx}=\varphi+\sigma+(\tilde{n}-1)\varphi
+(e^{-\tilde{\phi}}-1)\sigma+\frac{\tilde{n}}{2}e^{\theta_{1}\varphi}\varphi^{2}-\frac{e^{-\tilde{\phi}}}{2}e^{-\theta_{2}\sigma}\sigma^{2},\
\ \  \theta_{1},\theta_{2}\in(0,1).
\end{aligned}
\end{eqnarray}
Then, by multiplying \eqref{3.1phgl3} by $-|u_{\infty}|\varepsilon\beta W_{\varepsilon-1,\beta}\sigma$ and using \eqref{1.14}, one has
\begin{align}
&|u_{\infty}|\varepsilon\beta\int_{\mathbb{R}_{+}}
W_{\varepsilon-1,\beta}\sigma_{x}^{2}dx+\int_{\mathbb{R}_{+}} |u_{\infty}|\varepsilon(\varepsilon-1)\beta^{2}W_{\varepsilon-2,\beta}\sigma\sigma_{x}dx
\notag\\[2mm]
&\leq -\int_{\mathbb{R}_{+}}|u_{\infty}|\varepsilon\beta W_{\varepsilon-1,\beta}\varphi \sigma dx
-\int_{\mathbb{R}_{+}}|u_{\infty}|\varepsilon\beta W_{\varepsilon-1,\beta}\sigma^{2}dx
\notag\\[2mm]
&\quad+C(\phi_{b}+\|\sigma\|_{\infty})\int_{\mathbb{R}_{+}}\varepsilon\beta
W_{\varepsilon-1,\beta}(\varphi^{2}+\sigma^{2})dx+C\beta[\sigma^{2}+\sigma_{x}^{2}]_{x=0}.
\label{3.1iup1}
\end{align}
Applying the Cauchy-Schwarz inequality $-\sigma\varphi\leq\frac{1}{2}\varphi^{2}+\frac{1}{2}\sigma^{2}$, the first two terms on the right-hand side of \eqref{3.1iup1} are bounded by
\begin{equation*}
%\label{ }
\frac{1}{2}\int_{\mathbb{R}_{+}}|u_{\infty}|\varepsilon\beta W_{\varepsilon-1,\beta}\varphi^{2}dx
-\frac{1}{2}\int_{\mathbb{R}_{+}}|u_{\infty}|\varepsilon\beta W_{\varepsilon-1,\beta}\sigma^{2}dx.
\end{equation*}
Applying Lemma \ref{main.result4}, Lemma \ref{main.result4.2} and \eqref{3.2b}, we have
\begin{equation*}
C(\phi_{b}+\|\sigma\|_{\infty})\int_{\mathbb{R}_{+}}\varepsilon\beta
W_{\varepsilon-1,\beta}(\varphi^{2}+\sigma^{2})dx
\leq C\delta\beta\int_{\mathbb{R}_{+}}\varepsilon\beta
W_{\varepsilon-1,\beta}\varphi^{2}dx+C\delta\beta[\sigma_{x}^{2}]_{x=0}.
\end{equation*}
The second term on the left hand side of \eqref{3.1iup1} can been treated by using the integration by parts as
 \begin{eqnarray*}%\label{3.1iuaa}
 \begin{aligned}
\int_{\mathbb{R}_{+}} |u_{\infty}|\varepsilon(\varepsilon-1)\beta^{2}W_{\varepsilon-2,\beta}\sigma\sigma_{x}dx
=&-\frac{1}{2}|u_{\infty}|\varepsilon(\varepsilon-1)\beta^{2}[\sigma^{2}]_{x=0}\\&-\frac{1}{2}\int_{\mathbb{R}_{+}}|u_{\infty}|\varepsilon(\varepsilon-1)(\varepsilon-2)\beta^{3}W_{\varepsilon-3,\beta}\sigma^{2}dx,
\end{aligned}
\end{eqnarray*}
where by using Lemma \ref{main.result4}, it further holds that
\begin{equation*}
%\label{ }
\left|\int_{\mathbb{R}_{+}}|u_{\infty}|\varepsilon(\varepsilon-1)(\varepsilon-2)\beta^{3}W_{\varepsilon-3,\beta}\sigma^{2}dx\right|\leq C\beta^3\|\varphi\|_{\varepsilon-1,\beta}^{2}+C\beta^3[\sigma_{x}^{2}]_{x=0}.
\end{equation*}
Plugging all the above estimates into \eqref{3.1iup1} gives that
 \begin{align}
&\frac{1}{2}\int_{\mathbb{R}_{+}}|u_{\infty}|\varepsilon\beta W_{\varepsilon-1,\beta}\varphi^{2}dx
-\frac{1}{2}\int_{\mathbb{R}_{+}}|u_{\infty}|\varepsilon\beta W_{\varepsilon-1,\beta}\sigma^{2}dx\notag \\[2mm]
\geq& \int_{\mathbb{R}_{+}}|u_{\infty}|\varepsilon\beta
W_{\varepsilon-1,\beta}\sigma_{x}^{2}dx
-C\delta\beta\int_{\mathbb{R}_{+}}\varepsilon\beta
W_{\varepsilon-1,\beta}\varphi^{2}dx\notag\\[2mm]
&\quad-C\beta^3\int_{\mathbb{R}_{+}}
W_{\varepsilon-1,\beta}\varphi^{2}dx-C\beta[\sigma^{2}+\sigma_{x}^{2}]_{x=0}.
\label{3.1gfiuj}
\end{align}
%\end{eqnarray}
We then substitute \eqref{3.1gfiuj} back to \eqref{3.13} and take $\delta>0$ suitably small so as to obtain
\begin{eqnarray}\label{3.13de}
 \begin{aligned}[b]I_{1} \geq&\int_{\mathbb{R}_{+}}\varepsilon\beta
W_{\varepsilon-1,\beta}\Big\{
\frac{|u_{\infty}|}{2}RT_{\infty}\varphi^{2}-RT_{\infty}\varphi\psi+\frac{m|u_{\infty}|^{2}-1}{2|u_{\infty}|}\psi^{2}+\frac{R
|u_{\infty}|\zeta^{2}}{2(\gamma-1)T_{\infty}}-R\zeta\psi\Big\}dx\\[2mm]
&+\int_{\mathbb{R}_{+}}|u_{\infty}|\varepsilon\beta
W_{\varepsilon-1,\beta}\sigma_{x}^{2}dx
-C\delta\beta^{2}\int_{\mathbb{R}_{+}}
W_{\varepsilon-1,\beta}(\varphi^{2}+\psi^{2}+\zeta^{2})dx-C\beta[\sigma^{2}+\sigma_{x}^{2}]_{x=0}\\[2mm]
\geq&
(c-C\delta)\beta\|(\varphi,\psi,\zeta)\|_{\varepsilon-1,\beta}^{2}+c\beta\|\sigma_{x}\|_{\varepsilon-1,\beta}^{2}-C\beta[\sigma^{2}+\sigma_{x}^{2}]_{x=0},
\end{aligned}
\end{eqnarray}
where  $\frac{\gamma RT_{\infty}+1}{m}<
u^{2}_{\infty}$ is applied in the last inequality.

Now we estimate $I_{2}$ and the last term  in
$\eqref{2.7}$. In fact, it holds that
%\begin{eqnarray}
 \begin{align}
&|I_{2}|+\left|\int_{\mathbb{R}_{+}} W_{\varepsilon,\beta}\CN_{1} dx\right|\notag\\[2mm]
\leq & C(\mathcal
{N}_{\lambda,\beta}(M)+\phi_{b})\int_{\mathbb{R}_{+}}\varepsilon\beta
W_{\varepsilon-1,\beta}(\varphi^{2}+\psi^{2}+\zeta^{2}+\sigma^{2}+\varphi_{x}^{2}+\psi_{x}^{2}+\zeta_{x}^{2})dx\notag\\[2mm]
\leq&
C\beta\delta\|(\varphi,\psi,\zeta)\|_{\varepsilon-1,\beta,1}^{2}+C\beta\delta[\sigma_{x}^{2}]_{x=0},\label{3.14}
\end{align}
%\end{eqnarray}
where we have used \eqref{1.14}, \eqref{1.16}, \eqref{3.2b},
$\lambda\geq 2$, the Cauchy-Schwarz inequality and Lemma \ref{main.result4}.

Substituting \eqref{2.11a}, \eqref{2.11}, \eqref{2.11b}, \eqref{3.13de} and \eqref{3.14}
into $\eqref{2.7}$, we have
%\begin{eqnarray}
\begin{equation}
\frac{d}{dt}\int_{\mathbb{R}_{+}} W_{\varepsilon,\beta}(e^{-\tilde{\phi}}\CE_0
+\CE_1^{\rm x}+\frac{1}{2}\tilde{n}^{2}\varphi^{2})dx
+\beta\|(\varphi,\psi,\zeta)\|_{\varepsilon-1,\beta,1}^{2}+\beta\|\sigma_{x}\|_{\varepsilon-1,\beta}^{2}
\leq C[\sigma^{2}+\sigma_{x}^{2}]_{x=0}\label{3.15}
\end{equation}
%\end{eqnarray}
provided that $\delta>0$ is sufficiently small, where $\CE_0$ and $\CE_1^{\rm x}$ are defined in \eqref{def.e0} and \eqref{def.e1x} respectively. Therefore, the desired estimate \eqref{3.9} follows from multiplying \eqref{3.15} by $(1+\beta \tau)^{\xi}$ and integrating the resulting inequality over $(0,t)$. This then completes the proof of Lemma \ref{main.result2.987}.
\end{proof}

\begin{lemma}\label{main.newloik0}
Under the same conditions as in Proposition \ref{ste.pro2} (ii), for any $\varepsilon\in(0,\lambda]$, there exist constants $\delta>0$ and $C>0$ independent of $M$ such that
if the condition $0<\beta_{1}\leq \beta$ is satisfied, it holds for
any $t\in[0,M]$ and any $\xi\geq 0$ that
\begin{align}
&(1+\beta t)^{\xi}\|(\varphi,\psi,\zeta,\sigma,\sigma_{x})(t)\|_{\varepsilon,\beta_{1}}^{2}+c\int_{0}^{t} (1+\beta
\tau)^{\xi}[\varphi^{2}+\psi^{2}+\zeta^{2}+\sigma^{2}+\sigma_{x}^{2}]_{x=0}d\tau\notag\\
\leq& C\|(\varphi_{0},\psi_{0},\zeta_{0},\sigma_{0},\sigma_{x0})\|_{\varepsilon,\beta}^{2}
+C\xi\beta\int_{0}^{t} (1+\beta
\tau)^{\xi-1}\|(\varphi,\psi,\zeta,\sigma,\sigma_{x})(\tau)\|_{\varepsilon,\beta}^{2}d\tau\notag\\&+C\int_{0}^{t} (1+\beta
\tau)^{\xi}[(\beta_{1}+\delta\beta)\|(\varphi,\psi,\zeta)(\tau)\|_{\varepsilon-1,\beta,1}^{2}
+\beta_{1}\|\sigma_{x}(\tau)\|_{\varepsilon-1,\beta}^{2}]d\tau.\label{2.uykjuinm}
\end{align}
\end{lemma}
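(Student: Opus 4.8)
The plan is to mirror the structure of the proof of Lemma~\ref{main.resultloik0} but in the non-degenerate regime \eqref{1.15a}, where the weight $(1+\beta x)^{\varepsilon}$ produces a full first-order dissipation rather than only a $\beta^{3}$-scaled one. First I would rewrite the zeroth-order energy identity \eqref{2.1} exactly as in \eqref{2.1new}, isolating the awkward product $\psi_{x}\sigma$ and using $\eqref{1.16}_{1}$ to convert it into the form \eqref{2.2new}. Differentiating the Poisson equation \eqref{1.17} in $t$ gives \eqref{2.3new}, and multiplying by $e^{-v}\sigma$ yields the identity \eqref{2.3newh} for $-\sigma\varphi_{t}$; similarly, rewriting \eqref{1.17} as \eqref{2.4new} and multiplying by $e^{-\tilde v}\tilde u\sigma_{x}$ produces \eqref{2.5new} for $-\tilde u\sigma\varphi_{x}$. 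Substituting these into \eqref{2.1new} gives the conservation law \eqref{2.6new} with remainder $\CN_{2}$ as in \eqref{2.6nelkw}. I would then multiply by the weight $W_{\varepsilon,\beta_{1}}=(1+\beta_{1}x)^{\varepsilon}$ (with the small secondary parameter $\beta_{1}\le\beta$) and integrate over $\mathbb{R}_{+}$ to arrive at the analogue of \eqref{2.7new}.

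The boundary terms are handled exactly as in \eqref{2.8new}: invoking the interaction condition \eqref{1.19} and the smallness \eqref{3.2b}, together with $u_{\infty}<0$ and the strict inequality $u_{\infty}^{2}>(\gamma RT_{\infty}+1)/m$, one obtains the coercive lower bound $(c-C\delta)[\varphi^{2}+\psi^{2}+\zeta^{2}+\sigma^{2}+\sigma_{x}^{2}]_{x=0}$ for the boundary flux, which is the whole point of this lemma. For the interior $O(\varepsilon\beta_{1})$ terms, I would first use \eqref{2.3} (or its non-degenerate recast \eqref{3.1phgl3}) and integration by parts to trade $u\varphi\sigma$ and $\tfrac12\tilde u e^{-\tilde\phi}e^{-\tilde v}\sigma^{2}$ against $\sigma_{x}^{2}$, as in \eqref{2.9new}, and then complete the square in $(\varphi,\psi,\zeta,\sigma)$ exactly as in \eqref{2.10new}, using $u_{\infty}<0$; the resulting positive-definite quadratic form is controlled below by the error terms $C\beta_{1}(\beta_{1}+\delta)\|(\varphi,\psi,\zeta)\|_{\varepsilon-1,\beta,1}^{2}+C\beta_{1}\|\sigma_{x}\|_{\varepsilon-1,\beta}^{2}+C\beta_{1}[\cdots]_{x=0}$, absorbing the boundary remainder into the coercive boundary term when $\delta$ and $\beta_{1}/\beta$ are small. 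The term $\varepsilon\beta_{1}\int W_{\varepsilon-1,\beta_{1}}e^{-v}\sigma\sigma_{xt}\,dx$ on the left is rewritten as in \eqref{2.11new} via integration by parts, and then bounded as in \eqref{2.12new} by $C\beta_{1}[\varphi^{2}+\psi^{2}+\sigma^{2}+\sigma_{x}^{2}]_{x=0}+C\beta_{1}\|(\varphi_{x},\psi_{x},\sigma_{x})\|_{\varepsilon-1,\beta_{1}}^{2}+C\beta_{1}^{3}\|(\varphi,\psi)\|_{\varepsilon-3,\beta_{1}}^{2}$, where the cubic term is absorbed using $\beta_{1}^{2}\le C\delta$ into the first-order dissipation since $\|(\varphi,\psi)\|_{\varepsilon-3,\beta_{1}}\le\|(\varphi,\psi)\|_{\varepsilon-1,\beta_{1}}$. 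Finally, $\CN_{2}$ is estimated as in \eqref{2.13new} using \eqref{1.14}, \eqref{1.16}, Lemma~\ref{main.result4} and Lemma~\ref{main.result4.2}, giving $O(\beta_{1}+\delta\beta)$ and $O(\beta_{1}^{3})$ coefficients.

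Collecting everything, I would obtain a differential inequality of the shape
\begin{equation*}
\frac{d}{dt}\int_{\mathbb{R}_{+}}W_{\varepsilon,\beta_{1}}\Big(\tfrac{\CE_0}{\tilde n}+\tfrac12 e^{-\phi}e^{-v}\sigma^{2}+\tfrac12 e^{-v}\sigma_{x}^{2}\Big)dx+c[\varphi^{2}+\psi^{2}+\zeta^{2}+\sigma^{2}+\sigma_{x}^{2}]_{x=0}\le C(\beta_{1}+\delta\beta)\|(\varphi,\psi,\zeta)\|_{\varepsilon-1,\beta,1}^{2}+C\beta_{1}\|\sigma_{x}\|_{\varepsilon-1,\beta}^{2},
\end{equation*}
valid once $\beta_{1}\le\beta$ and $\delta$ are small; multiplying by $(1+\beta\tau)^{\xi}$ and integrating over $(0,t)$, using $\tfrac{d}{dt}(1+\beta\tau)^{\xi}=\xi\beta(1+\beta\tau)^{\xi-1}$ and the norm equivalence $\|\cdot\|_{\varepsilon,\beta_{1}}\simeq\|\cdot\|_{\varepsilon,\beta}$ up to constants depending on $\beta_{1}/\beta\in[\theta,1]$-type bounds, yields \eqref{2.uykjuinm}. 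The main obstacle, as in the degenerate case, is organizing the many integration-by-parts manipulations on the $\sigma$, $\sigma_{x}$, $\sigma_{xt}$ terms so that every term with a top-order or boundary factor is either absorbed into the coercive boundary dissipation or is multiplied by a genuinely small factor ($\beta_{1}$, $\delta\beta$, or $\beta_{1}^{2}\le C\delta$); the positivity of the quadratic form after completing squares is straightforward here precisely because the strict Bohm inequality $u_{\infty}^{2}>(\gamma RT_{\infty}+1)/m$ leaves room, in contrast with the delicate quadratic-form analysis \eqref{2.1gh}--\eqref{2.20} needed in Lemma~\ref{main.result2.9}.
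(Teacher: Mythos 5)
Your proposal is correct and follows essentially the same route as the paper, which proves this lemma by repeating the argument of Lemma \ref{main.resultloik0} with $\|\cdot\|_{\varepsilon-3,\beta_{1}}$ replaced by $\|\cdot\|_{\varepsilon-1,\beta_{1}}$ and with the key interior term re-estimated as in \eqref{2.jhyunew} using the strict Bohm inequality, exactly as you describe. The only cosmetic slip is your appeal to a norm equivalence with a ratio bound $\beta_{1}/\beta\geq\theta$: no such bound is assumed (or needed) here, since $\beta_{1}\leq\beta$ already gives the one-sided inequality $\|\cdot\|_{\varepsilon,\beta_{1}}\leq\|\cdot\|_{\varepsilon,\beta}$, which is all the stated estimate uses.
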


\begin{proof}
The proof is basically the same as that of Lemma \ref{main.resultloik0}. So, we explain the estimate of only one term which is bundled in the left-hand side of \eqref{2.7new}. Similarly to \eqref{2.10new} and under the condition \eqref{1.15a}, we have
\begin{align}
&\varepsilon\beta_{1}\int_{\mathbb{R}_{+}} W_{\varepsilon-1,\beta_{1}}\Big[-\frac{\CH_0}{\tilde{n}}+u\varphi\sigma
+\frac{1}{2}\tilde{u}e^{-\tilde{\phi}}e^{-\tilde{v}}\sigma^{2}-\frac{1}{2}e^{-\tilde{v}}\tilde{u}\sigma_{x}^{2}\Big]dx\notag\\[2mm]
\geq&\varepsilon\beta_{1}\int_{\mathbb{R}_{+}} W_{\varepsilon-1,\beta_{1}}\Big[\frac{1}{2}RT_{\infty}(-u_{\infty})\varphi^{2}-RT_{\infty}\varphi\psi+\frac{1}{2}m(-u_{\infty})\psi^{2}
-R\zeta\psi+\frac{R
(-u_{\infty})}{2(\gamma-1)T_{\infty}}\zeta^{2}\notag\\[2mm]
&+\sigma\psi+\frac{-u_{\infty}}{2}\sigma^{2}+\frac{3(-u_{\infty})}{2}\sigma_{x}^{2}\Big]dx
-C\beta_{1}(|\phi_{b}|+\mathcal {N}_{\lambda,\beta}(M))\|(\varphi,\psi,\zeta,\sigma)\|_{\varepsilon-1,\beta_{1}}^{2}\notag\\[2mm]
&-C\beta_{1}\|\sigma_{x}\|_{\varepsilon-1,\beta_{1}}^{2}
-C\beta_{1}[\sigma^{2}+\sigma_{x}^{2}]_{x=0}\notag\\[2mm]
\geq&\beta_{1}(c-C\beta\delta)\|(\varphi,\psi,\zeta,\sigma)\|_{\varepsilon-1,\beta_{1}}^{2}
-C\beta_{1}\|\sigma_{x}\|_{\varepsilon-1,\beta_{1}}^{2}
-C\beta_{1}[\sigma^{2}+\sigma_{x}^{2}]_{x=0}
\notag\\[2mm]
\geq&
-C\beta_{1}\|\sigma_{x}\|_{\varepsilon-1,\beta_{1}}^{2}
-C\beta_{1}[\sigma^{2}+\sigma_{x}^{2}]_{x=0}
\label{2.jhyunew}
\end{align}
provided that $\delta>0$ is sufficiently small.

Note that $(\varphi,\psi,\zeta,\sigma)$ appearing in  the other estimates in Lemma \ref{main.resultloik0} with the norm
$\|\cdot\|_{\varepsilon-3,\beta_{1}}$ should be replaced by $\|\cdot\|_{\varepsilon-1,\beta_{1}}$. Based on this and combined with \eqref{2.jhyunew},
we complete the proof of Lemma \ref{main.newloik0}.
\end{proof}

\begin{lemma}\label{main.result2.987as}
Under the same conditions as in Proposition \ref{ste.pro2} (ii), for any $\varepsilon\in(0,\lambda]$, there exist constants $\delta>0$ and $C>0$ independent of $M$ such that it holds for any $t\in[0,M]$ and $\xi\geq 0$ that
%\begin{equation}
 \begin{align}
 &(1+\beta t)^{\xi}\|(\varphi_{t},\psi_{t},\zeta_{t})\|_{\varepsilon,\beta,1}^{2}
+\beta\int_{0}^{t} (1+\beta
\tau)^{\xi}\left(\|(\varphi_{t},\psi_{t},\zeta_{t})(\tau)\|_{\varepsilon-1,\beta,1}^{2}
+\|\sigma_{tx}(\tau)\|_{\varepsilon-1,\beta}^{2}\right)d\tau\notag\\[2mm]
&\leq C\|(\varphi_{t0},\psi_{t0},\zeta_{t0})\|_{\varepsilon,\beta,1}^{2}
+C\xi\beta\int_{0}^{t} (1+\beta
\tau)^{\xi-1}\|(\varphi_{t},\psi_{t},\zeta_{t})(\tau)\|_{\varepsilon,\beta,1}^{2}d\tau\notag\\[2mm]
&\quad+C\delta\beta\int_{0}^{t}
(1+\beta
\tau)^{\xi}\|(\varphi,\psi,\zeta)(\tau)\|_{\varepsilon-1,\beta,2}^{2}d\tau\notag\\[2mm]
&\quad+C\int_{0}^{t}(1+\beta
\tau)^{\xi}[\varphi^{2}+\psi^{2}+\sigma^{2}+\sigma_{t}^{2}]_{x=0}d\tau.\label{3.16}
\end{align}
\end{lemma}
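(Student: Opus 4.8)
The plan is to run, at the level of the $t$-differentiated system, the same scheme that gave Lemma~\ref{main.result2.987}. First I would differentiate \eqref{2.0} in $t$ and pair it with $\tilde n(\varphi_t,\psi_t,\zeta_t)$, then differentiate \eqref{2.0} in $x$ and $t$ and pair it with $\tilde n(\varphi_{xt},\psi_{xt},\zeta_{xt})$; these two steps reproduce verbatim the identities \eqref{2.27} and \eqref{2.28}, whose derivation never used which of \eqref{1.15a}, \eqref{1.15b} holds. Multiplying \eqref{2.27} by $e^{-\tilde\phi}$, adding \eqref{2.28}, and handling the coupling term $\tilde n\psi_{xt}\sigma_t\cdot e^{-\tilde\phi}-\tilde n\psi_{xt}\sigma_{xxt}$ through the Poisson equation as in \eqref{2.4}, one gets the divergence identity \eqref{2.31}; multiplying by $W_{\varepsilon,\beta}$ and integrating over $\mathbb{R}_{+}$ produces \eqref{2.33}. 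It then remains only to re-estimate $I_3$, $I_4$, the boundary terms and $\int_{\mathbb{R}_{+}}W_{\varepsilon,\beta}\CN_3\,dx$ under the nondegenerate hypothesis \eqref{1.15a} and the smallness assumption \eqref{3.2b}.

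The core step is the lower bound for $I_3$. Writing $u=\psi+(\tilde u-u_{\infty})+u_{\infty}$, $T=\zeta+(\tilde T-T_{\infty})+T_{\infty}$ and using \eqref{1.14} together with Lemma~\ref{main.result4.2}, $I_3$ is bounded below by $\varepsilon\beta\int_{\mathbb{R}_{+}}W_{\varepsilon-1,\beta}$ times the very same quadratic form in $(\varphi_t,\psi_t,\zeta_t,\sigma_t)$ that appeared for $I_1$ in \eqref{3.12}, modulo a $C(\mathcal N_{\lambda,\beta}(M)+\phi_b)$-small error. After applying $\sigma_t\psi_t\geq-(\tfrac{|u_\infty|}{2}\sigma_t^2+\tfrac{1}{2|u_\infty|}\psi_t^2)$ as in \eqref{3.13}, the only non-coercive contribution is $-\tfrac{|u_\infty|}{2}\varepsilon\beta\int_{\mathbb{R}_{+}}W_{\varepsilon-1,\beta}\sigma_t^2\,dx$. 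I would absorb it exactly as in \eqref{3.1iup1}--\eqref{3.1gfiuj}: differentiate \eqref{3.1phgl3} in $t$, multiply by $-|u_\infty|\varepsilon\beta W_{\varepsilon-1,\beta}\sigma_t$, integrate by parts, and invoke \eqref{1.14}, the Cauchy--Schwarz inequality and the elliptic estimates of Lemma~\ref{main.result4} and Lemma~\ref{main.result4.2}. This turns the bad term into a good $+c\beta\|\sigma_{tx}\|_{\varepsilon-1,\beta}^2$, at the cost of $C\delta\beta\|(\varphi,\psi,\zeta)\|_{\varepsilon-1,\beta,2}^2$ and $C\beta[\sigma^2+\sigma_x^2+\sigma_t^2]_{x=0}$; the second-order space norm is unavoidable here since $\sigma_{xxt}=e^{v}\varphi_t+e^{-\phi}\sigma_t$ forces $\varphi_t$ into the identity, and $\varphi_t$ is then traded, via the first line of \eqref{1.16}, for first-order space derivatives whose own $x$-derivative is of second order. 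Using $\gamma RT_{\infty}+1<mu_{\infty}^{2}$ as in \eqref{3.13de}, one concludes that $I_3\geq(c-C\delta)\beta\|(\varphi_t,\psi_t,\zeta_t)\|_{\varepsilon-1,\beta}^2+c\beta\|\sigma_{tx}\|_{\varepsilon-1,\beta}^2-C\beta[\sigma^2+\sigma_x^2+\sigma_t^2]_{x=0}$.

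For the remaining terms I would argue just as in Lemma~\ref{main.result2.987} and Lemma~\ref{main.result2.erer9}: the term $\int_{\mathbb{R}_{+}}\varepsilon\beta W_{\varepsilon-1,\beta}(-\CH_2^{\rm xt})\,dx$ produces $c\beta\|(\varphi_{xt},\psi_{xt},\zeta_{xt})\|_{\varepsilon-1,\beta}^2$ as in \eqref{2.11a}; by $u_{\infty}<0$ and the smallness of $\delta$, the boundary contributions $-[e^{-\tilde\phi}\CH_1^{\rm t}+\tfrac12\tilde n^2u\varphi_t^2](t,0)$ and $-\CH_2^{\rm xt}(t,0)$ are bounded below by $(c-C\delta)[\varphi_t^2+\psi_t^2+\zeta_t^2+\varphi_{xt}^2+\psi_{xt}^2+\zeta_{xt}^2]_{x=0}-C[\sigma_t^2]_{x=0}$, so that $[\sigma_t^2]_{x=0}$ is simply carried to the right-hand side (it is closed later in combination with the estimate of type Lemma~\ref{mainnewjry2g}); and $I_4$ together with $\int_{\mathbb{R}_{+}}W_{\varepsilon,\beta}\CN_3\,dx$, after the $t$-differentiated form of \eqref{1.16} is used to trade $\varphi_{tt},\zeta_{tt}$ for spatial derivatives and \eqref{1.14}, $\lambda\geq2$, \eqref{3.2b}, Cauchy--Schwarz and Lemma~\ref{main.result4} are invoked, are bounded by $C\delta\beta\big(\|(\varphi,\psi,\zeta)\|_{\varepsilon-1,\beta,2}^2+\|(\varphi_{tx},\psi_{tx},\zeta_{tx})\|_{\varepsilon-1,\beta}^2\big)+C\delta\beta[\varphi^2+\psi^2+\sigma^2+\sigma_t^2]_{x=0}$, the $\partial_t$-derivative terms being absorbed into the good parts of $I_3$ and $(-\CH_2^{\rm xt})$. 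Substituting all this into \eqref{2.33}, multiplying by $(1+\beta\tau)^\xi$ and integrating over $(0,t)$ yields \eqref{3.16}.

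I expect the main obstacle to be the bookkeeping of the $\sigma_t$ term: the bad zeroth-order quantity $-\beta\int W_{\varepsilon-1,\beta}\sigma_t^2$ has no counterpart in the hyperbolic dissipation (Euler--Poisson carries no dissipation in the unweighted space), so one is forced through the $t$-differentiated Poisson equation, and the unavoidable price is precisely the $C\delta\beta\|(\varphi,\psi,\zeta)\|_{\varepsilon-1,\beta,2}^2$ term that surfaces on the right of \eqref{3.16} and that must afterwards be closed by pairing with the zeroth/first-order estimates, the elliptic estimates and the highest-order estimates. A secondary point demanding care is to resist absorbing the boundary terms at this stage: $[\varphi^2+\psi^2+\sigma^2+\sigma_t^2]_{x=0}$ must be left on the right-hand side, its control being deferred to the fluid-boundary estimates of type Lemma~\ref{main.newloik0} and Lemma~\ref{mainnewjry2g}.
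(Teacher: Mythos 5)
Your proposal is correct and is essentially the paper's own argument: the paper's proof of this lemma is only a two-line remark that Lemma \ref{main.result2.erer9} should be redone with the $\|\cdot\|_{\varepsilon-3,\beta}$ norms upgraded to $\|\cdot\|_{\varepsilon-1,\beta}$ via the nondegenerate mechanism of Lemma \ref{main.result2.987}, which is exactly what you do (reuse \eqref{2.27}--\eqref{2.33}, bound $I_{3}$ by the positive-definite quadratic form under $m u_{\infty}^{2}>\gamma R T_{\infty}+1$ after removing the $\sigma_{t}^{2}$ deficit with the $t$-differentiated Poisson equation as in \eqref{3.1iup1}--\eqref{3.1gfiuj}, and treat $I_{4}$, $\CN_{3}$ and the boundary terms as in Lemma \ref{main.result2.erer9}). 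Two cosmetic points: the boundary term produced by that Poisson step is $[\sigma_{t}\sigma_{xt}]_{x=0}$, which is controlled through \eqref{1.19} by $[\varphi^{2}+\psi^{2}+\sigma^{2}+\sigma_{t}^{2}]_{x=0}$ rather than your $[\sigma_{x}^{2}]_{x=0}$; and the second-order norm on the right of \eqref{3.16} is not forced by that step (the cross term $\varphi_{t}\sigma_{t}$ is absorbed by Cauchy--Schwarz into the quadratic form, exactly as $\varphi\sigma$ was in the zeroth-order case), but enters through $\CN_{3}$ and through converting $\varphi_{tt},\zeta_{tt}$ into spatial derivatives via Lemma \ref{main.result4.3}.
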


\begin{proof}
The proof is basically the same as that of Lemma \ref{main.result2.erer9} with a similar
modification as seen in comparison between the proofs of Lemmata \ref{main.result2.9} and \ref{main.result2.987}. Note that all the estimates in Lemma \ref{main.result2.erer9} with the norm $\|\cdot\|_{\varepsilon-3,\beta}$ should be replaced by $\|\cdot\|_{\varepsilon-1,\beta}$.
Here, the details of the proof are omitted for brevity.

\end{proof}

\begin{lemma}\label{mainnewjry2g}
Under the same conditions as in Proposition \ref{ste.pro2} (ii), for any $\varepsilon\in(0,\lambda]$, there exist constants $\delta>0$ and $C>0$ independent of $M$ such that
if the condition $0<\beta_{1}\leq \beta$ is satisfied,
it holds for any $t\in[0,M]$, $\beta_{1}\leq \beta$ and $\xi\geq 0$ that
\begin{align}
&(1+\beta t)^{\xi}\|(\varphi_{t},\psi_{t},\zeta_{t},\sigma_{t},\sigma_{tx})(t)\|_{\varepsilon,\beta_{1}}^{2}+c\int_{0}^{t} (1+\beta
\tau)^{\xi}\left[\varphi_{t}^{2}+\psi_{t}^{2}+\zeta_{t}^{2}+\sigma_{t}^{2}+\sigma_{xt}^{2}\right]_{x=0}d\tau\notag\\
\leq& C\|(\varphi_{t0},\psi_{t0},\zeta_{t0},\sigma_{t0},\sigma_{tx0})\|_{\varepsilon,\beta}^{2}
+C\xi\beta\int_{0}^{t} (1+\beta
\tau)^{\xi-1}\|(\varphi_{t},\psi_{t},\zeta_{t},\sigma_{t},\sigma_{tx})(\tau)\|_{\varepsilon,\beta}^{2}d\tau\notag\\&+C(\beta_{1}+\delta\beta)\int_{0}^{t} (1+\beta
\tau)^{\xi}\|(\varphi_{xt},\psi_{xt},\zeta_{xt},\varphi_{t},\psi_{t},\zeta_{t},\varphi_{x},\psi_{x},\zeta_{x},\varphi,\psi,\zeta,\sigma_{x},\sigma_{tx})\|_{\varepsilon-1,\beta}^{2}d\tau.\label{2.uhkigkyi}
\notag\\&
+C_{4}(\beta_{1}+\delta\beta)\int_{0}^{t} (1+\beta
\tau)^{\xi}[\sigma^{2}+\varphi^{2}+\psi^{2}]_{x=0}d\tau.
\end{align}
\end{lemma}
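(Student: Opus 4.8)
The plan is to mirror the proof of Lemma~\ref{mainboundary2g} (its degenerate-case counterpart), performing exactly the simplification that distinguishes the proofs of Lemmata~\ref{main.newloik0} and \ref{main.result2.987as} from those of Lemmata~\ref{main.resultloik0} and \ref{main.result2.erer9}. Concretely, starting from the time-differentiated identity \eqref{2.27}, I would rewrite the fourth term $\tilde{n}\psi_{xt}\sigma_{t}$ by the manipulations that produced \eqref{2.1tnew}--\eqref{2.1tnewj}: differentiate the Poisson equation \eqref{1.17} twice in $t$, multiply by $e^{-v}\sigma_{t}$ and by $e^{-\tilde v}\tilde u\sigma_{xt}$, and use $\eqref{1.16}_{1}$ to trade $\sigma_{t}\psi_{xt}$ for conservative terms plus remainders, obtaining a conservation identity of the form \eqref{2.1tnew}. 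Multiplying by the weight $W_{\varepsilon,\beta_{1}}=(1+\beta_{1}x)^{\varepsilon}$ and integrating over $\mathbb{R}_{+}$ yields the analogue of \eqref{2.7tbnew}. The structural changes relative to the degenerate case are only two: the exponential stationary-solution bounds \eqref{1.14} replace \eqref{1.14d}, so that every zero-order weighted norm $\|\cdot\|_{\varepsilon-3,\beta_{1}}$ appearing there becomes $\|\cdot\|_{\varepsilon-1,\beta_{1}}$; and the smallness hypothesis is now \eqref{3.2b} together with $0<\beta_{1}\le\beta$.

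For the boundary contribution I would differentiate \eqref{1.19} twice in $t$ to get \eqref{1.19tnews} and insert it into the boundary term $\big[-\CH_1^{\rm t}+\tilde{n}u\varphi_{t}\sigma_{t}+\tfrac12\tilde{n}ue^{-\phi}e^{-v}\sigma_{t}^{2}-\tfrac12\tilde{n}ue^{-v}\sigma_{xt}^{2}+\tilde{n}e^{-v}\sigma_{t}\sigma_{ttx}\big]_{x=0}$; as in \eqref{2.8cdtnew}, using $u_{\infty}<0$ and the strict nondegenerate Bohm inequality $mu_{\infty}^{2}>\gamma RT_{\infty}+1$ from \eqref{1.15a}, this is bounded below by $c[\varphi_{t}^{2}+\psi_{t}^{2}+\zeta_{t}^{2}+\sigma_{t}^{2}+\sigma_{xt}^{2}]_{x=0}$ up to $O(\mathcal{N}_{\lambda,\beta}(M)+\phi_{b})=O(\delta)$ corrections that smallness absorbs. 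The interior term carrying the factor $\varepsilon\beta_{1}W_{\varepsilon-1,\beta_{1}}$ is treated as in the derivation of \eqref{2.9new} and \eqref{2.10tgnew}, i.e.\ one uses the twice-differentiated Poisson equation to replace $\tilde u\varphi_{t}\sigma_{t}$ and $\tilde u e^{-\tilde\phi}e^{-\tilde v}\sigma_{t}^{2}$ by $\sigma_{xt}^{2}$-terms plus remainders, and then completes the square in $\varphi_{t}-|u_{\infty}|^{-1}\psi_{t}$, $\psi_{t}-\tfrac{|u_{\infty}|}{(\gamma-1)T_{\infty}}\zeta_{t}$ and $\sigma_{t}+|u_{\infty}|^{-1}\psi_{t}$; with $\|\cdot\|_{\varepsilon-3,\beta_{1}}$ collapsed to $\|\cdot\|_{\varepsilon-1,\beta_{1}}$ this gives a nonnegative quadratic form minus $C\beta_{1}\|\sigma_{xt}\|_{\varepsilon-1,\beta_{1}}^{2}$ and minus $C\beta_{1}[\sigma_{t}^{2}+\sigma_{xt}^{2}]_{x=0}$.

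It remains to handle the two right-hand terms of the analogue of \eqref{2.7tbnew}. The term $-\varepsilon\beta_{1}\int_{\mathbb{R}_{+}}W_{\varepsilon-1,\beta_{1}}\tilde{n}e^{-v}\sigma_{t}\sigma_{ttx}\,dx$ is bounded by Cauchy--Schwarz by $C\beta_{1}\|(\sigma_{t},\sigma_{ttx})\|_{\varepsilon-1,\beta_{1}}^{2}$, and $\sigma_{ttx}$ is controlled via the elliptic estimate of Lemma~\ref{main.result4} applied to the Poisson equation differentiated twice in $t$, after which $\varphi_{tt}$ is expressed through $\varphi_{xt},\psi_{xt}$ and lower-order terms by $\eqref{1.16}_{1}$; this is entirely analogous to \eqref{2.12nhgfgnew} and produces interior terms $\|(\varphi_{x},\psi_{x},\zeta_{x},\sigma_{x},\varphi_{xt},\psi_{xt},\sigma_{xt})\|_{\varepsilon-1,\beta_{1}}^{2}$ together with boundary terms $[\varphi^{2}+\psi^{2}+\sigma^{2}+\varphi_{t}^{2}+\psi_{t}^{2}+\sigma_{t}^{2}]_{x=0}$. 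The integral $\int_{\mathbb{R}_{+}}W_{\varepsilon,\beta_{1}}\CN_{4}\,dx$ is estimated summand by summand as in \eqref{2.13tgnew}: each term of $\CN_{4}$ in \eqref{2.1tnewj} is either cubic in the perturbation, hence gains a factor $\mathcal{N}_{\lambda,\beta}(M)$ or $\phi_{b}$ (both $\le C\delta$), or carries an explicit stationary-solution derivative contributing a factor $\beta_{1}$; eliminating time derivatives by $\eqref{1.16}_{1}$ and Lemma~\ref{main.result4} yields a bound $C(\beta_{1}+\delta\beta)\|(\varphi_{xt},\psi_{xt},\zeta_{xt},\varphi_{t},\psi_{t},\zeta_{t},\varphi_{x},\psi_{x},\zeta_{x},\varphi,\psi,\zeta,\sigma_{x},\sigma_{tx})\|_{\varepsilon-1,\beta}^{2}+C(\beta_{1}+\delta\beta)[\sigma^{2}+\varphi^{2}+\psi^{2}]_{x=0}$. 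Collecting everything in the analogue of \eqref{2.7tbnew}, choosing first $\delta$ small and then $\beta_{1}/\beta$ small, the boundary dissipation $c[\varphi_{t}^{2}+\psi_{t}^{2}+\zeta_{t}^{2}+\sigma_{t}^{2}+\sigma_{xt}^{2}]_{x=0}$ survives on the left; multiplying by $(1+\beta\tau)^{\xi}$ and integrating over $(0,t)$ gives the claimed estimate \eqref{2.uhkigkyi}.

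The main obstacle is twofold, and although both points already appear in the degenerate case they must be re-verified under \eqref{1.15a}: first, after inserting \eqref{1.19tnews} one must check that the resulting boundary quadratic form in $(\varphi_{t},\psi_{t},\zeta_{t},\sigma_{t},\sigma_{xt})$ is strictly positive definite, which hinges on the strict inequality $mu_{\infty}^{2}>\gamma RT_{\infty}+1$ (equality would only yield a degenerate form, as in the other case); second, the cross term $\sigma_{t}\sigma_{ttx}$ forces control of a quantity carrying two time derivatives, which is only possible by systematically commuting $\partial_{t}$ with the continuity equation $\eqref{1.16}_{1}$ to convert $\varphi_{tt}$ into $\varphi_{xt},\psi_{xt}$ and then invoking elliptic regularity for $\sigma$, so that no uncontrolled $\|\sigma_{ttx}\|$ or $\|\varphi_{tt}\|$ norm is left over on the right-hand side.
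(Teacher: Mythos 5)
Your proposal is correct and is essentially the paper's own proof, which is given as a one-line reduction: repeat the derivation of Lemma \ref{mainboundary2g} (the identity \eqref{2.1tnew}, the weighted integration \eqref{2.7tbnew}, the boundary estimate \eqref{2.8cdtnew} via \eqref{1.19tnews}, and the treatments \eqref{2.12nhgfgnew}, \eqref{2.13tgnew}), with the single modification that \eqref{2.10tgnew} is handled in the manner of \eqref{2.jhyunew}. Your only deviation is to keep the degenerate completing-of-squares bound for the interior $\varepsilon\beta_{1}$-term instead of invoking the strict positive definiteness as in \eqref{2.jhyunew}; this is still valid, since under \eqref{1.15a} the quadratic form exceeds the degenerate sum of squares by $\tfrac{mu_{\infty}^{2}-(\gamma RT_{\infty}+1)}{2|u_{\infty}|}\psi_{t}^{2}\geq 0$ and the $O\bigl((|\phi_{b}|+\mathcal{N}_{\lambda,\beta}(M))\beta_{1}\bigr)$ remainders now measured in $\|\cdot\|_{\varepsilon-1,\beta_{1}}$ land in the admissible right-hand terms --- note only that, contrary to your closing remark, the coercivity of the boundary form does not hinge on strictness of the Bohm inequality (it holds in the degenerate case \eqref{2.8cdtnew} as well, requiring merely $mu_{\infty}^{2}>\gamma RT_{\infty}$).
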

\begin{proof}
We follow the similar steps as in deriving \eqref{2.uhgkyi} in the proof of Lemma \ref{mainboundary2g} except that the estimated way of \eqref{2.10tgnew} should be replaced by the estimated way  of \eqref{2.jhyunew}.
\end{proof}

Now, following Lemmata \ref{main.result2.987}, \ref{main.newloik0},  \ref{main.result2.987as},  \ref{mainnewjry2g} above, we are ready to give the

\begin{proof}[Proof of Proposition \ref{ste.pro2}]
Following the same argument as in the proofs of
Proposition \ref{ste.pro1.1}, we choose $0<\beta_{1}\ll\theta_{0}\beta\ll\beta\ll\delta\ll\theta_{0}\ll1$ sufficiently small. As in the proof of
Proposition \ref{ste.pro1.1}, combining a priori estimates obtained in
Lemmata $\ref{main.result2.987}$--$\ref{mainnewjry2g}$, applying Lemma \ref{main.result4}, Lemma \ref{main.result4.3}, Lemma \ref{main.result4.4}
and taking $\delta>0$ sufficiently small, we conclude that
\begin{align}
 &(1+\beta t)^{\xi}(\|(\varphi,\psi,\zeta)(t)\|_{\varepsilon,\beta,2}^{2}+\|\sigma(t)\|_{\varepsilon,\beta,4}^{2})\notag
\\&+\beta\int_{0}^{t} (1+\beta
\tau)^{\xi}(\|(\varphi,\psi,\zeta)(\tau)\|_{\varepsilon-1,\beta,2}^{2}+\|\sigma(\tau)\|_{\varepsilon-1,\beta,4}^{2})d\tau
\notag \\[2mm]
&\leq
 C(\|(\varphi_{0},\psi_{0},\zeta_{0})\|_{\varepsilon,\beta,2}^{2}+r_{0}^{2})
+C\xi\beta\int_{0}^{t} (1+\beta
\tau)^{\xi-1}(\|(\varphi,\psi,\zeta)(\tau)\|_{\varepsilon,\beta,2}^{2}+\|\sigma(\tau)\|_{\varepsilon,\beta,4}^{2})d\tau
\label{3.17}
\end{align}
for any $t\geq 0$ and $\xi\geq 0$.
Then, in terms of \eqref{3.17}, employing the induction argument similar as \cite{SKawashima} and \cite{MNishikawa} with $\xi=\lambda-\varepsilon+\kappa$  for an arbitrary positive constant $\kappa$
and combining the elliptic estimates in Lemma \ref{main.result4} yield that for any $t\geq 0$,
%\begin{eqnarray}
 \begin{align*}
 &(1+\beta
 t)^{\lambda-\varepsilon+\kappa}(\|(\varphi,\psi,\zeta)(\tau)\|_{\varepsilon,\beta,2}^{2}+\|\sigma(\tau)\|_{\varepsilon,\beta,4}^{2})\notag \\[2mm]
 &
\quad+\beta\int_{0}^{t} (1+\beta
\tau)^{\lambda-\varepsilon+\kappa}(\|(\varphi,\psi,\zeta)(\tau)\|_{\varepsilon-1,\beta,2}^{2}+\|\sigma(\tau)\|_{\varepsilon-1,\beta,4}^{2})d\tau
\notag \\[2mm]
&\leq C(1+\beta
 t)^{\kappa}(\|(\varphi_{0},\psi_{0},\zeta_{0})\|_{\lambda,\beta,2}^{2}+r_{0}^{2}),
 %\label{3.18}
\end{align*}
%\end{eqnarray}
which proves \eqref{3.3}. This then completes the proof of the second part (ii)  of Proposition \ref{ste.pro2}. As mentioned before, for the part (i) corresponding to the exponential weight case, the proof of \eqref{3.1} follows in a similar way and thus is omitted for brevity. We therefore conclude the proof of Proposition \ref{ste.pro2}.
\end{proof}

\section{Appendix}\label{Append}
In this appendix, we will give some basic results used in the proof of Proposition \ref{ste.pro1.1} and Proposition \ref{ste.pro2}. Those lemmas below are similar to ones obtained in \cite{SIAm} and \cite{boundaawa}.

\begin{lemma}[see \cite{SIAm,boundaawa}]\label{main.result4}
Consider the elliptic equation \eqref{1.17}. Under the same assumptions as in either Proposition \ref{ste.pro1.1} for the
degenerate case or Proposition \ref{ste.pro2} (ii) for the nondegenerate case, the following estimates hold for certain positive constants $c$ and $C$
provided that $\delta$ is sufficiently small:
\begin{eqnarray*}\label{4.1lod}
 \begin{aligned}[b]
\|\sigma\|_{\alpha,\beta}^{2}+c\|\sigma_{x}\|_{\alpha,\beta}^{2}\leq
 C\|\varphi\|_{\alpha,\beta}^{2}+C[\sigma_{x}^{2}]_{x=0},
\end{aligned}
\end{eqnarray*}
\begin{eqnarray*}\label{4.1d}
 \begin{aligned}[b]
 [\sigma^{2}]_{x=0}\leq
 C\|\varphi\|_{\alpha,\beta}^{2}+C[\sigma_{x}^{2}]_{x=0},
\end{aligned}
\end{eqnarray*}
\begin{eqnarray*}\label{4.1g}
 \begin{aligned}[b]
 [\sigma_{x}^{2}]_{x=0}\leq
 C\|\varphi\|_{\alpha,\beta}^{2}+C[\sigma^{2}]_{x=0},
\end{aligned}
\end{eqnarray*}
\begin{eqnarray*}\label{4.1jh}
 \begin{aligned}[b]
 \|\sigma_{t}\|_{\alpha,\beta,1}^{2}\leq
 C\|\varphi_{t}\|_{\alpha,\beta}^{2}+C[\varphi^{2}+\psi^{2}+\sigma^{2}]_{x=0},
\end{aligned}
\end{eqnarray*}
\begin{eqnarray*}\label{4.1mk}
 \begin{aligned}[b]
\frac{d}{dt}[e^{-\sigma}-1+\sigma]_{x=0}+c\|\sigma_{t}\|_{\alpha,\beta,1}^{2}\leq
 C\|\varphi_{t}\|_{\alpha,\beta}^{2}+C[\varphi^{2}+\psi^{2}]_{x=0}\leq
 C\|(\varphi,\psi)\|_{\alpha,\beta,1}^{2},
\end{aligned}
\end{eqnarray*}
\begin{eqnarray*}\label{4.1jh}
 \begin{aligned}[b]
 \|\sigma_{tt}\|_{\alpha,\beta,1}^{2}\leq
 C\|(\varphi_{tt},\varphi_{t},\sigma_{t})\|_{\alpha,\beta}^{2}+C[\varphi_{t}^{2}+\psi_{t}^{2}+\sigma_{t}^{2}]_{x=0},
\end{aligned}
\end{eqnarray*}
\begin{eqnarray*}\label{4.1lop}
 \begin{aligned}[b]
 \|\partial_{t}^{i}\sigma\|_{\alpha,\beta,j}^{2}\leq
 C\|\varphi\|_{\alpha,\beta,i+j-2}^{2}+C[\sigma_{x}^{2}]_{x=0},\  i\in
 \mathbb{Z}\cap[0,2], \ \ \ j\in\mathbb{Z}\cap[2,4-i].
\end{aligned}
\end{eqnarray*}
\end{lemma}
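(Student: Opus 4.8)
The plan is to treat the Poisson equation \eqref{1.17} as a family, parametrized by $t$, of second order ordinary differential equations in $x$ on $\mathbb{R}_{+}$, and to run a weighted energy argument for each of the listed estimates, adapting the computations of \cite{SIAm,boundaawa} to the present variables $v=\log n$, $\tilde v=\log\tilde n$ and to the extra nonisentropic coefficients. The first step is the Taylor expansion already used in \eqref{2.3} and \eqref{3.1phgl3}, by which \eqref{1.17} reads
\[
\sigma_{xx}=\tilde{n}\,\varphi+e^{-\tilde{\phi}}\,\sigma+\CQ,\qquad
\CQ:=\frac{\tilde n}{2}e^{\theta_{1}\varphi}\varphi^{2}-\frac{e^{-\tilde{\phi}}}{2}e^{-\theta_{2}\sigma}\sigma^{2}.
\]
Under the standing hypotheses the coefficients $\tilde n$ and $e^{-\tilde\phi}$ are smooth and bounded above and below by positive constants close to $1$, their $x$-derivatives are small by \eqref{1.14} (nondegenerate case) and \eqref{1.14d} (degenerate case), and $\|\varphi\|_{\infty}+\|\sigma\|_{\infty}\le C\mathcal{N}_{\lambda,\beta}(M)$ is small by Sobolev embedding, so $\CQ$ is a genuine perturbation. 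Throughout I will use the elementary trace inequality, valid for any $\alpha\in\mathbb{R}$, $\beta>0$ and any decaying $g$,
\[
[g^{2}]_{x=0}=-\int_{\mathbb{R}_{+}}\partial_{x}\bigl(W_{\alpha,\beta}g^{2}\bigr)\,dx\le \|g\|_{\alpha,\beta}^{2}+\|g_{x}\|_{\alpha,\beta}^{2}+C\beta\|g\|_{\alpha,\beta}^{2},
\]
together with the bound $|\partial_{x}W_{\alpha,\beta}|\le C\beta W_{\alpha,\beta}$.

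For the first estimate I would multiply the displayed ODE by $W_{\alpha,\beta}\sigma$ and integrate over $\mathbb{R}_{+}$: the left side yields $-\|\sigma_{x}\|_{\alpha,\beta}^{2}$ plus a weight term of size $O(\beta)\|\sigma\|_{\alpha,\beta}\|\sigma_{x}\|_{\alpha,\beta}$ and the boundary term $[W_{\alpha,\beta}\sigma\sigma_{x}]_{x=0}$, while the right side produces the coercive term $\int W_{\alpha,\beta}e^{-\tilde\phi}\sigma^{2}$, the term $\int W_{\alpha,\beta}\tilde n\varphi\sigma$ handled by Young's inequality against it plus $C\|\varphi\|_{\alpha,\beta}^{2}$, and a cubic term bounded by $C(\|\varphi\|_{\infty}+\|\sigma\|_{\infty})(\|\varphi\|_{\alpha,\beta}^{2}+\|\sigma\|_{\alpha,\beta}^{2})$. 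Splitting $[W_{\alpha,\beta}\sigma\sigma_{x}]_{x=0}\le\eta[\sigma^{2}]_{x=0}+C_{\eta}[\sigma_{x}^{2}]_{x=0}$, removing $[\sigma^{2}]_{x=0}$ by the trace inequality, and taking $\eta,\beta,\delta$ small, one absorbs everything into the left side; this is the first inequality, and the second one is immediate from it and the trace inequality. For the third inequality I would instead test against $W_{\alpha,\beta}\sigma_{x}$: the left side now produces the favorable boundary term $-\frac12[W_{\alpha,\beta}\sigma_{x}^{2}]_{x=0}$ together with $O(\beta)\|\sigma_{x}\|_{\alpha,\beta}^{2}$, and on the right side $\int W_{\alpha,\beta}e^{-\tilde\phi}\sigma\sigma_{x}$ is integrated by parts once to give a multiple of $[\sigma^{2}]_{x=0}$ with the favorable sign plus $O(\delta)\|\sigma\|_{\alpha,\beta}^{2}$, $\int W_{\alpha,\beta}\tilde n\varphi\sigma_{x}\le\eta\|\sigma_{x}\|_{\alpha,\beta}^{2}+C_{\eta}\|\varphi\|_{\alpha,\beta}^{2}$, and the cubic term is $O(\delta)$ times interior $L^{2}$ norms; substituting the already proven first estimate into the $\|\sigma\|_{\alpha,\beta}^{2}+\|\sigma_{x}\|_{\alpha,\beta}^{2}$ factors and using the smallness of $\beta,\eta,\delta$, every term fed back onto $[\sigma_{x}^{2}]_{x=0}$ carries a small constant and can be absorbed, giving the third inequality.

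For the estimates involving $t$-derivatives I would differentiate \eqref{1.17} in $t$, using $v_{t}=\varphi_{t}$, $\phi_{t}=\sigma_{t}$ and the $t$-independence of $\tilde v,\tilde\phi$, to obtain \eqref{2.3new}, namely $\sigma_{xxt}=e^{v}\varphi_{t}+e^{-\phi}\sigma_{t}$, and once more $\sigma_{xxtt}=e^{v}\varphi_{tt}+e^{-\phi}\sigma_{tt}+e^{v}\varphi_{t}^{2}-e^{-\phi}\sigma_{t}^{2}$. Testing \eqref{2.3new} against $W_{\alpha,\beta}\sigma_{t}$ and repeating the argument of the first estimate gives the fourth inequality once the boundary term $[W_{\alpha,\beta}\sigma_{t}\sigma_{xt}]_{x=0}$ is handled via the boundary condition \eqref{1.19}, which expresses $\sigma_{xt}(t,0)$ through $\psi$, $e^{v}-e^{\tilde v}\sim e^{\tilde v}\varphi$ and $e^{-\sigma}-1\sim-\sigma$ at $x=0$, so that $|\sigma_{xt}(t,0)|\le C[\,|\varphi|+|\psi|+|\sigma|\,]_{x=0}$; the sixth inequality follows the same way from the twice $t$-differentiated identity, using \eqref{1.19tnews} to bound $\sigma_{xtt}(t,0)$ by $[\varphi_{t}^{2}+\psi_{t}^{2}+\sigma_{t}^{2}]_{x=0}$ plus higher-order products. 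The fifth inequality requires one extra observation: I would test \eqref{2.3new} against $e^{-v}W_{\alpha,\beta}\sigma_{t}$ and, in the boundary term $[e^{-v}\sigma_{t}\sigma_{xt}]_{x=0}$, insert \eqref{1.19} and note that the contribution of $u_{\infty}(e^{-\sigma}-1)$ equals $-u_{\infty}\sigma_{t}(e^{-\sigma}-1)=-u_{\infty}\frac{d}{dt}(e^{-\sigma}-1+\sigma)$, which, since $-u_{\infty}>0$ and $e^{-\sigma}-1+\sigma\ge0$ by convexity, reproduces, up to a $\frac{d}{dt}$ of the non-negative boundary quantity $[e^{-\sigma}-1+\sigma]_{x=0}$ and up to higher-order terms controlled by smallness, exactly the left side of the fifth inequality, while the remaining boundary contributions are bounded by $[\varphi^{2}+\psi^{2}]_{x=0}\le C\|(\varphi,\psi)\|_{\alpha,\beta,1}^{2}$. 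Finally, the last inequality is the elliptic gain of two spatial derivatives: differentiating the displayed ODE in $x$ (and, when $i\ge1$, first in $t$) the appropriate number of times expresses $\partial_{t}^{i}\partial_{x}^{j}\sigma$ algebraically through $\partial_{t}^{i}\partial_{x}^{j-2}$ of $\tilde n\varphi$, $e^{-\tilde\phi}\sigma$ and $\CQ$, whence $\|\partial_{t}^{i}\sigma\|_{\alpha,\beta,j}\le C(\|\varphi\|_{\alpha,\beta,i+j-2}+\|\partial_{t}^{i}\sigma\|_{\alpha,\beta,j-2}+\text{small terms})$; feeding in the already established first, fourth and sixth estimates for the low-order $\sigma$-norms closes the induction on $j$.

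The main obstacle is the bookkeeping of boundary contributions: every integration by parts produces a boundary term in $\sigma$, $\sigma_{x}$ or their $t$-derivatives at $x=0$ which, unlike in the Dirichlet problem of \cite{SIAm}, cannot be discarded. In particular the two trace estimates (the second and third ones) must be proven as a pair, each controlling one boundary quantity in terms of the interior norm of $\varphi$ and the other boundary quantity, and the delicate point — as already seen in the treatment of the third inequality above — is to verify that, once the interior estimate has been substituted, every term fed back onto $[\sigma_{x}^{2}]_{x=0}$ carries a constant that is a product of $\beta$, of a Young parameter $\eta$, and of $\delta$, hence can be made smaller than $\frac12$ and absorbed. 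This is precisely where the smallness built into the hypotheses of Propositions \ref{ste.pro1.1} and \ref{ste.pro2}, and the decay properties \eqref{1.14} and \eqref{1.14d} of the sheath, enter the argument.
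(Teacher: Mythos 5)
You should know at the outset that the paper does not prove this lemma at all: it is quoted ``see \cite{SIAm,boundaawa}'' and the appendix only remarks that the statements are similar to ones obtained there. So the comparison can only be against the standard weighted-energy/elliptic argument of those references, and your plan does reproduce it in outline: Taylor-expand \eqref{1.17}, test against $W_{\alpha,\beta}\sigma$ and $W_{\alpha,\beta}\sigma_{x}$, test the $t$-differentiated equation \eqref{2.3new} (and its second $t$-derivative) against weighted multiples of $\sigma_{t}$, $\sigma_{tt}$, control the boundary fluxes $\sigma_{xt}(t,0)$, $\sigma_{xtt}(t,0)$ through \eqref{1.19} and \eqref{1.19tnews}, and absorb via trace inequalities and smallness. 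For the first three estimates, the fourth, and the sixth, your argument is correct as sketched.

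Two points, however, do not deliver the inequalities as literally stated. First, in the fifth estimate you should test with $W_{\alpha,\beta}\sigma_{t}$, not $e^{-v}W_{\alpha,\beta}\sigma_{t}$: with your multiplier the exact-derivative term comes out as $e^{-v(t,0)}|u_{\infty}|\frac{d}{dt}[e^{-\sigma}-1+\sigma]_{x=0}$, and since $e^{-v(t,0)}$ is time-dependent the commutator $\sim\varphi_{t}(t,0)\,[\sigma^{2}]_{x=0}$ is neither on the admissible right-hand side $C\|\varphi_{t}\|_{\alpha,\beta}^{2}+C[\varphi^{2}+\psi^{2}]_{x=0}$ nor absorbable into the left, which contains no $[\sigma^{2}]_{x=0}$; with the plain multiplier the term is exactly $|u_{\infty}|\frac{d}{dt}[e^{-\sigma}-1+\sigma]_{x=0}$. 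Also your displayed identity has a sign slip: $\sigma_{t}(e^{-\sigma}-1)=-\frac{d}{dt}(e^{-\sigma}-1+\sigma)$, hence $-u_{\infty}\sigma_{t}(e^{-\sigma}-1)=u_{\infty}\frac{d}{dt}(e^{-\sigma}-1+\sigma)$, not $-u_{\infty}\frac{d}{dt}(\cdots)$; tracked correctly through the boundary term $-[\sigma_{t}\sigma_{xt}]_{x=0}$ this does produce the $\frac{d}{dt}$ term on the left with the favorable coefficient $|u_{\infty}|$, but as written your step would give the opposite sign. Second, for the last estimate with $i\geq1$ your recursion, applied to $\sigma_{xx}=\tilde n\varphi+e^{-\tilde\phi}\sigma+\CQ$, bounds $\|\partial_{t}^{i}\sigma\|_{\alpha,\beta,j}$ by $\|\partial_{t}^{i}\varphi\|_{\alpha,\beta,j-2}$ (plus quadratic terms in $\varphi_{t},\sigma_{t}$) and by the right-hand sides of your fourth and sixth estimates, i.e. by quantities involving $\varphi_{t},\varphi_{tt},\sigma_{t}$ and $[\varphi^{2}+\psi^{2}+\sigma^{2}]_{x=0}$ --- not by $C\|\varphi\|_{\alpha,\beta,i+j-2}^{2}+C[\sigma_{x}^{2}]_{x=0}$. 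To reach anything of the stated form one must additionally invoke the hyperbolic system \eqref{1.16} (the content of Lemma \ref{main.result4.3}) to trade time derivatives for space derivatives, which inevitably brings $\psi,\zeta$ (and $[\sigma_{x}^{2}]_{x=0}$) into the bound; your sketch omits this step entirely. (To be fair, the quoted statement itself appears loose for $i\geq1$, since $\sigma_{t}$ is driven by the flux and cannot be controlled by spatial norms of $\varphi$ alone; but a complete proof must at least make the needed reduction explicit.)
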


\begin{lemma}[see \cite{SIAm,boundaawa}]\label{main.result4.2}
Under the same assumptions as in either Proposition \ref{ste.pro1.1} for the
degenerate case or Proposition \ref{ste.pro2} (ii) for the nondegenerate case,
it holds for any $t\in[0,M]$ and $\alpha \leq\lambda/2$ that
\begin{eqnarray*}
&\dis \|((1+\beta x)^{\alpha}{(\varphi,\psi,\zeta)},(1+\beta x)^{\alpha}{(\varphi_{x},\psi_{x},\zeta_{x})})(t)\|_{L^{\infty}(\mathbb{R}^{+})}\leq
 C\mathcal {N}_{\lambda,\beta}(M),\label{4.3}\\[2mm]
&\dis \|(1+\beta x)^{\alpha}{(\varphi_{t},\psi_{t},\zeta_{t})}(t)\|_{L^{\infty}(\mathbb{R}^{+})}\leq
 C\mathcal {N}_{\lambda,\beta}(M),\label{4.4}\\[2mm]
&\dis \|(1+\beta x)^{\alpha}{(\sigma,\sigma_{x},\sigma_{t})}(t)\|_{L^{\infty}(\mathbb{R}^{+})}\leq
 C\mathcal {N}_{\lambda,\beta}(M).\label{4.vb4}
\end{eqnarray*}
\end{lemma}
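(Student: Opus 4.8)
The plan is to reduce all three inequalities to a single elementary weighted Sobolev embedding on the half-line, and then to use the evolution equations \eqref{1.16}--\eqref{1.17} together with the elliptic estimates of Lemma \ref{main.result4} to dispose of the time derivatives and of the potential $\sigma$. First I would record the embedding: for $g\in H^1(\mathbb{R}_{+})$ with $g(x)\to 0$ as $x\to\infty$ — which holds for every component of the perturbation thanks to the far-field conditions \eqref{1.5a}, \eqref{1.5bgh}, \eqref{1.8} — and any exponent $\mu\in[0,\lambda]$, setting $h:=(1+\beta x)^{\mu/2}g$ one writes $h(x)^2=-2\int_x^\infty h\,h_y\,dy\le 2\|h\|\,\|h_x\|$. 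Since $h_x=(1+\beta x)^{\mu/2}g_x+\tfrac{\mu\beta}{2}(1+\beta x)^{\mu/2-1}g$ and $\beta(1+\beta x)^{-1}\le\beta\le C$ by the smallness hypothesis \eqref{d} (resp.\ \eqref{3.2b}), this yields
\[
\|(1+\beta x)^{\mu/2}g\|_{L^\infty}^2\le C\,\|g\|_{\mu,\beta,1}^2\le C\,\|g\|_{\lambda,\beta,1}^2 ,
\]
using $W_{\mu,\beta}\le W_{\lambda,\beta}$ for $\mu\le\lambda$. Applying this with $\mu=2\alpha\le\lambda$ to $\varphi,\psi,\zeta$ and to $\varphi_x,\psi_x,\zeta_x$ gives the first displayed estimate at once, because $\|(\varphi,\psi,\zeta)\|_{\lambda,\beta,1}\le\|(\varphi,\psi,\zeta)\|_{\lambda,\beta,2}\le\mathcal{N}_{\lambda,\beta}(M)$.

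Next I would handle the time derivatives by solving the system \eqref{1.16} for $(\varphi_t,\psi_t,\zeta_t)$, which expresses them as linear combinations of $(\varphi_x,\psi_x,\zeta_x)$, $\sigma_x$ and $(\varphi,\psi,\zeta)$ with coefficients that are bounded functions of $(\tilde n,\tilde u,\tilde T)$, their first spatial derivatives, and the (small) perturbation itself — the uniform bounds on the stationary profile here being supplied by Proposition \ref{prop1.1}. Consequently
\[
\|(1+\beta x)^\alpha(\varphi_t,\psi_t,\zeta_t)\|_{L^\infty}\le C\big(\|(1+\beta x)^\alpha(\varphi,\psi,\zeta,\varphi_x,\psi_x,\zeta_x)\|_{L^\infty}+\|(1+\beta x)^\alpha\sigma_x\|_{L^\infty}\big),
\]
and the first group is already $\le C\mathcal{N}_{\lambda,\beta}(M)$ by the previous step, so only the $\sigma$-quantities remain.

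For $\sigma$ and $\sigma_x$ I would invoke the weighted elliptic estimate of Lemma \ref{main.result4} with weight exponent $2\alpha$ (the case $i=0$, $j=2$ there), giving $\|\sigma\|_{2\alpha,\beta,2}^2\le C\|\varphi\|_{2\alpha,\beta}^2+C[\sigma_x^2]_{x=0}\le C\mathcal{N}_{\lambda,\beta}(M)^2$; the embedding applied to $\sigma$ and to $\sigma_x$ then closes their $L^\infty$ bounds. For $\sigma_t$, differentiating \eqref{1.17} in $t$ produces the elliptic identity $\sigma_{xxt}=e^{v}\varphi_t+e^{-\phi}\sigma_t$, i.e.\ \eqref{2.3new}, whose associated weighted estimate in Lemma \ref{main.result4} reads $\|\sigma_t\|_{2\alpha,\beta,1}^2\le C\|\varphi_t\|_{2\alpha,\beta}^2+C[\varphi^2+\psi^2+\sigma^2]_{x=0}$; since $\|\varphi_t\|_{2\alpha,\beta}\le C\|(\varphi,\psi,\zeta)\|_{\lambda,\beta,1}$ again through \eqref{1.16}, and the traces are controlled by $\mathcal{N}_{\lambda,\beta}(M)$, one last application of the embedding gives the $L^\infty$ bound for $\sigma_t$ and completes the second and third displayed estimates.

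There is no genuine analytic obstacle here; the one point that needs care is purely bookkeeping — one must make sure that every weight exponent generated by differentiation, or carried by the decaying factors of the stationary profile, never exceeds $\lambda$, so that the resulting norm can always be dominated by $\|(\varphi,\psi,\zeta)\|_{\lambda,\beta,2}$. The hypothesis $\alpha\le\lambda/2$ is exactly what secures this: each quantity carries the weight $(1+\beta x)^{\alpha}=W_{2\alpha,\beta}^{1/2}$ with $2\alpha\le\lambda$. Finally, the exponential-weight statement in Proposition \ref{ste.pro2}(i) is obtained by the identical argument with $(1+\beta x)^{\alpha}$ and $(1+\beta x)^{\mu/2}$ replaced throughout by $e^{\alpha x/2}$ and $e^{\mu x/2}$.
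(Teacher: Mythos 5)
Your proposal is correct: the weighted half-line Sobolev embedding, the use of the hyperbolic system \eqref{1.16} to convert $(\varphi_t,\psi_t,\zeta_t)$ into spatial derivatives plus $\sigma_x$, and the weighted elliptic estimates of Lemma \ref{main.result4} (together with $[\sigma_x^2]_{x=0}\leq \mathcal{N}_{\lambda,\beta}(M)^2$ and the trace bounds) is exactly the standard argument behind this lemma, which the paper itself does not prove but quotes from \cite{SIAm,boundaawa}. Your bookkeeping of the weight exponents ($2\alpha\leq\lambda$, monotonicity $W_{2\alpha,\beta}\leq W_{\lambda,\beta}$) is the only delicate point and you handle it correctly, so no gap remains.
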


\begin{lemma}[see \cite{SIAm,boundaawa}]\label{main.result4.3}
Under the same assumptions as in either Proposition \ref{ste.pro1.1} for the
degenerate case or Proposition \ref{ste.pro2} (ii) for the nondegenerate case, the following estimates hold for certain positive constants $c$ and $C$
provided that $\delta$ is sufficiently small:
\begin{eqnarray*}
&\dis \|{(\varphi_{t},\psi_{t},\zeta_{t})}\|_{\xi,\beta}^{2}
 \leq C\|{(\varphi,\psi,\zeta)}\|_{\xi,\beta,1}^{2}+C[\sigma_{x}^{2}]_{x=0},\label{4.6}\\[2mm]
&\dis \|({\varphi_{tx},\psi_{tx},\zeta_{tx}},{\varphi_{tt},\psi_{tt},\zeta_{tt}})\|_{\xi,\beta}^{2}
 \leq C\|{(\varphi,\psi,\zeta)}\|_{\xi,\beta,2}^{2}+C[\sigma_{x}^{2}]_{x=0},\label{4.7}
\end{eqnarray*}
\begin{eqnarray*}\label{4.5}
 \begin{aligned}[b]
 \|\partial_{t}^{i}{(\varphi,\psi,\zeta)}\|_{\xi,\beta,j}^{2}\leq
 C\|{(\varphi,\psi,\zeta)}\|_{\xi,\beta,i+j}^{2}+C[\sigma_{x}^{2}]_{x=0},
\end{aligned}
\end{eqnarray*}
where $(i,j)\in\{(i,j)\in\mathbb{Z}^{2}|i,j\geq 0, i+j\leq 2\}$.
\end{lemma}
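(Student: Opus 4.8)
The plan is to use the evolution system \eqref{1.16} itself to trade each temporal derivative of $(\varphi,\psi,\zeta)$ for one spatial derivative plus lower-order terms, and then to absorb every remaining occurrence of $\sigma$ and its derivatives via the elliptic estimates of Lemma \ref{main.result4}. No new energy identity is needed; the whole argument is algebraic, together with the bounds already available for the stationary profile.

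First I would establish the bound on $\|(\varphi_{t},\psi_{t},\zeta_{t})\|_{\xi,\beta}^{2}$. Solving \eqref{1.16} (equivalently \eqref{2.0}) algebraically for the time derivative yields a pointwise identity of the schematic form
\begin{equation*}
(\varphi,\psi,\zeta)_{t}=\mathbf{A}_{1}[u,T]\,(\varphi,\psi,\zeta)_{x}+\mathbf{A}_{2}[\psi,\zeta]\,(\tilde v,\tilde u,\tilde T)_{x}+\big(0,\,m^{-1}\sigma_{x},\,0\big),
\end{equation*}
where $\mathbf{A}_{1}$ and $\mathbf{A}_{2}$ are smooth matrix-valued functions of their arguments, hence uniformly bounded under the hypotheses \eqref{b}--\eqref{d} (resp. \eqref{3.2b}) because $u=\psi+\tilde u$ and $T=\zeta+\tilde T$ remain close to $u_{\infty},T_{\infty}$. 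Multiplying by $W_{\xi,\beta}$, squaring, integrating over $\mathbb{R}_{+}$, and using $|(\tilde v,\tilde u,\tilde T)_{x}|\le C$ from Proposition \ref{prop1.1}, I obtain $\|(\varphi_{t},\psi_{t},\zeta_{t})\|_{\xi,\beta}^{2}\le C\|(\varphi,\psi,\zeta)\|_{\xi,\beta,1}^{2}+C\|\sigma_{x}\|_{\xi,\beta}^{2}$; the first elliptic inequality of Lemma \ref{main.result4} then replaces $\|\sigma_{x}\|_{\xi,\beta}^{2}$ by $C\|\varphi\|_{\xi,\beta}^{2}+C[\sigma_{x}^{2}]_{x=0}$, which is the first claimed bound.

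Next I would iterate this substitution. Differentiating the identity once in $x$ expresses $(\varphi_{tx},\psi_{tx},\zeta_{tx})$ through $(\varphi_{xx},\psi_{xx},\zeta_{xx})$, products of first-order quantities with the bounded and exponentially or algebraically decaying derivatives of the stationary solution, and $\sigma_{xx}$; the Poisson relation \eqref{1.17}, in the expanded form \eqref{2.3}, turns $\sigma_{xx}$ into $O(|\varphi|+|\sigma|+|\varphi|^{2}+|\sigma|^{2})$, after which $\|\sigma\|_{\xi,\beta}$ is absorbed by Lemma \ref{main.result4}. Differentiating instead in $t$ expresses $(\varphi_{tt},\psi_{tt},\zeta_{tt})$ through $(\varphi_{tx},\psi_{tx},\zeta_{tx})$ --- just controlled --- together with lower-order terms and $\sigma_{xt}$, the latter handled by the higher-order elliptic bound $\|\sigma_{t}\|_{\alpha,\beta,1}^{2}\le C\|\varphi_{t}\|_{\alpha,\beta}^{2}+C[\varphi^{2}+\psi^{2}+\sigma^{2}]_{x=0}$ of Lemma \ref{main.result4} combined with the first-order estimate already proven. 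Collecting these gives the bound on $\|(\varphi_{tx},\psi_{tx},\zeta_{tx},\varphi_{tt},\psi_{tt},\zeta_{tt})\|_{\xi,\beta}^{2}$, and the general inequality for $\|\partial_{t}^{i}(\varphi,\psi,\zeta)\|_{\xi,\beta,j}^{2}$ with $i+j\le 2$ is merely the organized list of the cases $(i,j)\in\{(1,0),(1,1),(2,0)\}$, the case $i=0$ being trivial.

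The only point requiring care, and the mild obstacle here, is the bookkeeping of the algebraic weight together with the consistent use of smallness: differentiating $W_{\xi,\beta}=(1+\beta x)^{\xi}$ produces a factor $\beta$ and lowers the exponent by one, so terms like $\beta\|\cdot\|_{\xi-1,\beta}$ appear, but these are harmless since $\beta\le\delta$ is small and $W_{\xi-1,\beta}\le W_{\xi,\beta}$, and crucially no weight is ever shifted upward because derivatives are only moved onto the profile factors or onto quantities already known to be small. One must also keep track that all profile-derivative and nonlinear contributions carry a factor $\phi_{b}$, $\mathcal{N}_{\lambda,\beta}(M)$, or $\delta$, so that they are genuinely absorbable rather than merely bounded. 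Apart from this, the argument is a routine chain of pointwise identities from \eqref{1.16}, \eqref{1.17}, \eqref{2.3}, the uniform bounds of Proposition \ref{prop1.1} and Lemma \ref{main.result4.2}, and the elliptic estimates of Lemma \ref{main.result4}, exactly as for the analogous lemmas of \cite{SIAm,boundaawa}; hence it is indicated only schematically.
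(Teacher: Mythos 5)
Your proposal is correct and follows essentially the same route as the cited sources \cite{SIAm,boundaawa} (the paper itself gives no proof of Lemma \ref{main.result4.3}, only the citation): solve the hyperbolic system \eqref{1.16} for the time derivatives to trade $\partial_t$ for $\partial_x$ plus profile and nonlinear terms, and absorb all occurrences of $\sigma$, $\sigma_x$, $\sigma_{xx}$, $\sigma_{xt}$ through the Poisson equation \eqref{1.17} and the elliptic estimates of Lemma \ref{main.result4}. The only detail left implicit—that the boundary contributions $[\varphi^{2}+\psi^{2}+\sigma^{2}]_{x=0}$ arising from the $\sigma_t$-estimate are converted into $C\|(\varphi,\psi,\zeta)\|_{\xi,\beta,1}^{2}+C[\sigma_x^{2}]_{x=0}$ via the trace inequality (the weight being $\geq 1$) and the $[\sigma^{2}]_{x=0}$ bound of Lemma \ref{main.result4}—is routine and does not affect the argument.
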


\begin{lemma}[see \cite{SIAm,boundaawa}]\label{main.result4.4}
Under the same assumptions as in either Proposition \ref{ste.pro1.1} for the
degenerate case or Proposition \ref{ste.pro2} (ii) for the nondegenerate case, the following estimates hold for certain positive constants  $C$
provided that $\delta$ is sufficiently small:
\begin{eqnarray*}
&\dis \|{(\varphi_{x},\psi_{x},\zeta_{x})}\|_{\xi,\beta}^{2}\leq
 C\|({\varphi_{t},\psi_{t},\zeta_{t}},{\varphi,\psi,\zeta})\|_{\xi,\beta}^{2}+C[\sigma_{x}^{2}]_{x=0},\\[2mm]
%\end{eqnarray*}
%\begin{eqnarray*}
&\dis \|{(\varphi_{xx},\psi_{xx},\zeta_{xx})}\|_{\xi,\beta}^{2}\leq
 C\|({\varphi_{xt},\psi_{xt},\zeta_{xt}},{\varphi_{x},\psi_{x},\zeta_{x}},{\varphi,\psi,\zeta})\|_{\xi,\beta}^{2}+C[\sigma_{x}^{2}]_{x=0}.
\end{eqnarray*}
\end{lemma}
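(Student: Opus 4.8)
The plan is to recover the spatial derivatives of $U:=(\varphi,\psi,\zeta)^{\top}$ from its temporal derivatives by exploiting the strict hyperbolicity of \eqref{1.16}. First I would rewrite \eqref{2.0} in the compact form $A_{0}(v,u,T)\,U_{t}+A_{1}(v,u,T)\,U_{x}=(0,\sigma_{x},0)^{\top}+F$, where $A_{0}=\mathrm{diag}(RT,m,R/((\gamma-1)T))$, $A_{1}$ is the symmetric matrix appearing in \eqref{2.0}, and $F$ collects the zero-order source $-(\cdots)(\tilde v,\tilde u,\tilde T)_{x}^{\top}$, whose entries are products of a component of $U$ with one of $\tilde v_{x},\tilde u_{x},\tilde T_{x}$. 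The characteristic matrix $A_{0}^{-1}A_{1}$ has eigenvalues $\lambda_{1}[u,T],\lambda_{2}[u,T],\lambda_{3}[u,T]$ from \eqref{revad1}; under the Bohm criterion and the smallness condition these stay strictly negative and bounded away from $0$ uniformly in $x$ and $t$, so $A_{1}$ is invertible with $|A_{1}^{-1}|+|A_{1}^{-1}A_{0}|\le C$.

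For the first estimate, from the compact form one has $U_{x}=A_{1}^{-1}\big[(0,\sigma_{x},0)^{\top}+F-A_{0}U_{t}\big]$. Multiplying by $W_{\xi,\beta}$ and integrating over $\mathbb{R}_{+}$, and using $\|(\tilde v_{x},\tilde u_{x},\tilde T_{x})\|_{L^{\infty}}\le C$ from Proposition \ref{prop1.1} for the entries of $F$, I would obtain $\|U_{x}\|_{\xi,\beta}^{2}\le C\|U_{t}\|_{\xi,\beta}^{2}+C\|\sigma_{x}\|_{\xi,\beta}^{2}+C\|U\|_{\xi,\beta}^{2}$. The first elliptic estimate of Lemma \ref{main.result4} gives $\|\sigma_{x}\|_{\xi,\beta}^{2}\le C\|\varphi\|_{\xi,\beta}^{2}+C[\sigma_{x}^{2}]_{x=0}$, which absorbs the $\sigma_{x}$ term and yields the first inequality of the lemma.

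For the second estimate, I would differentiate the compact system once in $x$, so that $A_{0}U_{xt}+A_{1}U_{xx}=(0,\sigma_{xx},0)^{\top}+F_{x}-(A_{0})_{x}U_{t}-(A_{1})_{x}U_{x}$. Since $(A_{0})_{x}$ and $(A_{1})_{x}$ involve only $\tilde T_{x}+\zeta_{x}$ and $\tilde u_{x}+\psi_{x}$, which are bounded in $L^{\infty}$ by Lemma \ref{main.result4.2}, and since $F_{x}$ contributes terms of the form (derivative of $U$ of order $\le 1$) times a bounded stationary quantity plus (component of $U$) times a bounded stationary quantity, solving for $U_{xx}$ and taking the weighted norm gives $\|U_{xx}\|_{\xi,\beta}^{2}\le C\|U_{xt}\|_{\xi,\beta}^{2}+C\|\sigma_{xx}\|_{\xi,\beta}^{2}+C\|U_{x}\|_{\xi,\beta}^{2}+C\|U\|_{\xi,\beta}^{2}$. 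To handle $\sigma_{xx}$ I would use the Poisson equation in the Taylor form \eqref{2.3}, namely $\sigma_{xx}=\tilde n(\varphi+\tfrac12 e^{\theta_{1}\varphi}\varphi^{2})+e^{-\tilde\phi}(\sigma-\tfrac12 e^{-\theta_{2}\sigma}\sigma^{2})$, so that $\|\sigma_{xx}\|_{\xi,\beta}^{2}\le C\|\varphi\|_{\xi,\beta}^{2}+C\|\sigma\|_{\xi,\beta}^{2}$ up to cubic terms controlled by $C\delta$ times the same quantities; then the first elliptic estimate of Lemma \ref{main.result4} turns $\|\sigma\|_{\xi,\beta}^{2}$ into $C\|\varphi\|_{\xi,\beta}^{2}+C[\sigma_{x}^{2}]_{x=0}$, and combining gives the second inequality.

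The main obstacle, modest as it is, is the uniform invertibility of $A_{1}$: one must check that the smallness of $\delta$ (in the degenerate case of $\phi_{b}+\mathcal{N}_{\lambda,\beta}(M)/\beta^{3}$, in the nondegenerate case of $|\phi_{b}|+\mathcal{N}_{\lambda,\beta}(M)+\beta$) keeps $u=\psi+\tilde u$ and $T=\zeta+\tilde T$ close enough to $(u_{\infty},T_{\infty})$ that \eqref{revad1} holds with characteristic speeds bounded away from $0$, hence $|A_{1}^{-1}|+|A_{1}^{-1}A_{0}|\le C$ uniformly in $x,t$; the weight $W_{\xi,\beta}$ passes through this pointwise matrix bound harmlessly. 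Everything else is routine Cauchy--Schwarz bookkeeping, so, following \cite{SIAm,boundaawa}, the argument will only be sketched.
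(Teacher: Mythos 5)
Your argument is correct and is essentially the one the paper delegates to \cite{SIAm,boundaawa}: solve the symmetrized hyperbolic system for the spatial derivatives by inverting the convection matrix, whose determinant is proportional to $u\,(mu^{2}-\gamma RT)$ and hence bounded away from zero under the Bohm criterion together with the smallness of $\delta$, and then absorb the $\sigma$-contributions through the Taylor-expanded Poisson equation \eqref{2.3} and the elliptic estimates of Lemma \ref{main.result4}. The only slip is in the second step, where the commutator term $(A_{0})_{x}U_{t}$ produced by the $x$-differentiation leaves a $\|(\varphi_{t},\psi_{t},\zeta_{t})\|_{\xi,\beta}$ contribution that is missing from your displayed bound and from the lemma's right-hand side; it is recovered at once either by re-using the undifferentiated equation to express $U_{t}$ through $U_{x}$, $\sigma_{x}$, $U$, or by the first estimate of Lemma \ref{main.result4.3}, so the estimate still closes.
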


\section*{Acknowledgement}
The authors were supported by the National Natural Science Foundation of China $\#$12071163, $\#$12171390, and the Natural Science Foundation of Fujian Province of China $\#$2021J01305, $\#$2020J01071. The authors would like to thank Professor Lei Yao for many fruitful
discussions on the topic of this paper.


\begin{thebibliography}{99}

\bibitem{Ambros}
A. Ambroso, Stability for solutions of a stationary Euler-Poisson problem, {\it Math. Models Methods Appl. Sci.} {\bf 16} (2006), 1817--1837.


\bibitem{Asympto}
A. Ambroso, F. M\'ehats and P.A. Raviart, On singular perturbation problems for the nonlinear Poisson equation, {\it Asymptot. Anal.} {\bf 25} (2001), 39--91.

\bibitem{FFC}
F.F. Chen, {\it Introduction to Plasma Physics and Controlled Fusion}, 2nd edition, Springer, 1984.


\bibitem{CheDR}
Y.F. Chen, W.J. Ding, J.P. Gao, M.Y. Lin and L.Z. Ruan,  Asymptotic stability of boundary layer to the multi-dimensional isentropic Euler-Poisson equations arising in plasma physics, {\it Calc. Var. Partial Differential Equations} {\bf 63} (2024), 34 pp.



\bibitem{Cip}
J. W. Cipolla and M. B. Silevitch, On the temporal development of a plasma sheath, {\it J.
Plasma Phys.} {\bf 25} (1981), 373--389.

\bibitem{CG}
S. Cordier and E. Grenier, Quasineutral limit of an Euler-Poisson system arising from plasma physics, {\it Comm. Partial Differential Equations} {\bf 25} (2000), 1099--1113.

\bibitem{DL}
R.J. Duan and S.Q. Liu, Stability of the rarefaction wave of the Vlasov-Poisson-Boltzmann system, {\it SIAM J. Math. Anal.} {\bf 47} (2015), 3585--3647.

\bibitem{SIAm}
R.J. Duan, H.Y. Yin and C.J. Zhu,  A half-space problem on the full Euler-Poisson system,
{\it SIAM J. Math. Anal.} {\bf 53} (2021), 6094--6121.


\bibitem{Rousset}
D. G\'erard-Varet, D. Han-Kwan and F. Rousset, Quasineutral limit of the Euler-Poisson system for ions in a domain with boundaries, Indiana Univ. Math. J. {\bf 62} (2013), 359-402.


\bibitem{GHR}
D. G\'erard-Varet, D. Han-Kwan and F. Rousset, Quasineutral limit of the Euler-Poisson system for ions in a domain with boundaries II. {\it J. \'Ec. polytech. Math.} {\bf 1} (2014), 343--386.

\bibitem{GGPS}
E. Grenier, Y. Guo, B. Pausader and M. Suzuki, Derivation of the ion equation, {\it Quart. Appl. Math.} {\bf 78} (2020), 305--332.


\bibitem{GPu}
Y. Guo and X. Pu, KdV limit of the Euler-Poisson system, {\it Arch. Rational Mech. Anal.} {\bf 211} (2014), 673--710.


\bibitem{Jung}
C.Y. Jung, B. Kwon and M. Suzuki, Quasi-neutral limit for the Euler-Poisson system in the presence of plasma sheaths
with spherical symmetry, {\it Math. Models Methods Appl. Sci.} {\bf 12} (2016), 2369--2392.


\bibitem{JKS}
C.Y. Jung, B. Kwon and M. Suzuki, Quasi-neutral limit for Euler-Poisson system in the presence of boundary layers in an annular domain, {\it J. Differential Equations} {\bf 269} (2020), 8007--8054.


\bibitem{SKawashima}
 S. Kawashima and A. Matsumura, Asymptotic stability of traveling wave solutions of systems for one-dimensional gas motion, {\it Comm. Math. Phys.} {\bf 101} (1985), 97--127.


\bibitem{LLS}
D. Lannes, F. Linares and J.C. Saut, The Cauchy problem for the Euler-Poisson system and derivation of the Zakharov-Kuznetsov equation, in: Studies in Phase Space Analysis with Applications to PDEs, in: Progress in Nonlinear Differential Equations and Their Applications, vol. 84, Springer, New York, 2013, pp. 181--213.

\bibitem{LiMing}
M.J. Li and M. Suzuki, Stability of Stationary Solutions to the Nonisentropic Euler-Poisson System in a Perturbed Half Space, arXiv:2403.07594v1.

\bibitem{LS}
Y. Li and D.H. Sattinger, Soliton collisions in the Ion acoustic plasma equations, {\it J. Math. Fluid Mech.} {\bf 1} (1999), 117--130.

\bibitem{LY}
Y. Liu and X.F. Yang, On the long-wave approximation for the Euler-Poisson system, {\it Adv. Math.} {\bf 372} (2020), 107300, 41 pp.

\bibitem{NOS}
S. Nishibata, M. Ohnawa and M. Suzuki, Asymptotic stability of boundary layers to the Euler-Poisson equations arising in plasma physics, {\it SIAM J. Math. Anal.} {\bf 44} (2012), 761--790.


\bibitem{MNishikawa}
 M. Nishikawa, Convergence rate to the traveling wave for viscous conservation laws,
Funkcial. Ekvac. {\bf 41} (1998), 107-132.

\bibitem{boundaawa}
M. Ohnawa, Asymptotic stability of plasma boundary layers to the Euler-Poisson equations with fluid-boundary interaction, {\it SIAM J. Math. Anal.} {\bf 47} (2015), 2795--2831.



\bibitem{Pu}
X. Pu, Dispersive limit of the Euler-Poisson system in higher dimensions, {\it SIAM J. Math. Anal.} {\bf 45} (2013), 834--878.


\bibitem{Riemann}
K.U. Riemann, The Bohm criterion and sheath formation, {\it J. Phys. D: Appl. Phys.} {\bf 24} (1991), 493--518.

\bibitem{Suzuki}
M. Suzuki, Asymptotic stability of stationary solutions to the
Euler-Poisson equations arising in plasma physics, {\it Kinet. Relat. Models} {\bf 4} (2011), 569--588.

\bibitem{multicomponent}
M. Suzuki, Asymptotic stability of a boundary layer to the Euler-Poisson equations for a multicomponent plasma, {\it Kinet. Relat. Models} {\bf 9} (2016), 587--603.

\bibitem{Suzuki04}
 M. Suzuki and M. Takayama, Stability and existence of stationary solutions to the Euler-Poisson equations in a domain with a curved boundary, {\it Arch. Ration. Mech. Anal.} {\bf 239} (2021), 357--387.

\bibitem{Suzuki03}
M. Suzuki and M. Takayama, The Kinetic and Hydrodynamic Bohm Criterions for Plasma
Sheath Formation, {\it Arch. Ration. Mech. Anal.} {\bf 247} (2023), 33 pp.

\bibitem{Suzuki02}
M. Suzuki, M. Takayama and K.Z. Zhang, Nonlinear Stability and Instability of Plasma
Boundary Layers, arXiv:2208.07326.

\bibitem{yaoyin}
L. Yao, H.Y. Yin and M.M. Zhu, The existence and asymptotic stability of Plasma-Sheaths to the full Euler-Poisson system, arXiv:2403.09730.

\end{thebibliography}
\end{document}